\newcommand{\mL}{\mathcal{L}}
\newcommand{\WW}{\mathcal{W}}
\newcommand{\Wkp}{\mathcal{W}^{k,p}}
\newcommand{\Wsp}{\mathcal{W}^{s,p}}
\newcommand{\Wnp}{\mathcal{W}^{n,p}}
\newcommand{\Wni}{\mathcal{W}^{n,\infty}}
\newcommand{\X}{\mathcal{X}}
\newcommand{\Y}{\mathcal{Y}}
\newcommand{\pdf}{{\varrho}}
\let\emptyset\varnothing
\newcommand{\N}{\ifmmode\mathbb{N}\else$\mathbb{N}$\fi}
\newcommand{\Q}{\ifmmode\mathbb{Q}\else$\mathbb{Q}$\fi}
\newcommand{\Z}{\ifmmode\mathbb{Z}\else$\mathbb{Z}$\fi}
\newcommand{\tn}[1]{\textnormal{#1}}
\newcommand{\myto}{\mathop{\raisebox{-0pt}{\scalebox{2.6}[1]{$\longrightarrow$}}}}
\newcommand{\NN}{\tn{NN}}
\newcommand{\NNinput}{\tn{\#input}}
\newcommand{\NNwidth}{\, \tn{widthvec}}
\newcommand{\xal}{{\bm{\alpha}}}
\newcommand{\xx}{{\bm{x}}}
\newcommand{\yy}{{\bm{y}}}
\newcommand{\xk}{{\bm{k}}}
\newcommand{\OO}{{\mathcal{O}}}
\newcommand{\pkt}{{p_{f,\xk,t,s}}}
\newcommand{\pk}{{p_{\xk}}}
\newcommand{\I}{\mathcal{I}}
\newcommand{\J}{\mathcal{J}}
\newcommand{\zod}{{{(0,1)^d}}}
\newcommand{\tf}{{\tilde{f}}}
\newcommand{\OkK}{{\Omega_{\xk,K}}}
\newcommand{\BkK}{{B_{\xk,K}}}
\newcommand{\xth}{{\bm{\theta}}}
\renewcommand{\epsilon}{\varepsilon}
\renewcommand{\subset}{\subseteq}
\renewcommand{\frac}[2]{\tfrac{#1}{#2}}
\newcommand{\hz}[1]{\textcolor{blue}{\textbf{HZ: #1}}}
\newcommand{\printfnsymbol}[1]{%
  \textsuperscript{\@fnsymbol{#1}}%
}
\theoremstyle{plain}
\newtheorem{theorem}{Theorem}[section]%
\newtheorem{corollary}[theorem]{Corollary} %
\newtheorem{lemma}[theorem]{Lemma}
\theoremstyle{definition}
\newtheorem{definition}[theorem]{Definition}
\theoremstyle{remark}
\title{Deep Learning via Neural Energy Descent}
\author{%
  Wenrui Hao\\
  Department of Mathematics\\
   The Pennsylvania State University\\
    104 McAllister Building, University Park,\\
    State College, PA, 16802, USA\\
  \texttt{wxh64@psu.edu} \\
   \And
  Chunmei Wang \\
  Department of Mathematics \\
  University of Florida \\
1400 Stadium Rd, Gainesville, FL, 32611, USA\\
  \texttt{chunmei.wang@ufl.edu}
  \And 
  Xingjian Xu\\
  School of Mathematics and Statistics\\
  Lanzhou University\\
  No. 222 South Tianshui Road, \\Lanzhou, Gansu, 730000, China\\
  \texttt{xuxj2020@lzu.edu.cn}
   \And
   Haizhao Yang \\
   Department of Mathematics \\
   University of Maryland College Park \\
   4176 Campus Dr, College Park, MD, 20742,  USA\\
   \texttt{hzyang@umd.edu} \\
}
\begin{document}

\maketitle
\begin{abstract}
This paper proposes the Nerual Energy Descent (NED) via neural network evolution equations for a wide class of deep learning problems. We show that deep learning can be reformulated as the evolution of network parameters in an evolution equation and the steady state solution of the  partial differential equation (PDE) provides a solution to deep learning. This equation corresponds to a gradient descent flow of a variational problem and hence the proposed time-dependent PDE solves an energy minimization problem to obtain a global minimizer of deep learning. This gives a novel interpretation and solution to deep learning optimization. The computational complexity of the proposed energy descent method can be enhanced by randomly sampling the spatial domain of the PDE leading to an efficient NED. Numerical examples are provided to demonstrate the numerical advantage of NED over stochastic gradient descent (SGD).
\end{abstract}

\section{Introduction}
Learning a high-dimensional function or the solution of a high-dimensional and nonlinear partial differential equation (PDE) is ubiquitous and important in science and engineering  \cite{Lee2002,Ehrhardt2008,Yserentant2005,Gaikwad2009,Wales2003}. Generally speaking, there are no closed-form solutions to such high-dimensional problems and nonlinear PDEs that make the numerical solutions of such problems indispensable in real applications. Firstly, it is challenging to develop conventional numerical methods for high-dimensional problems since there is a curse of dimensionality in conventional discretization. 
Secondly, conventional numerical methods often rely on mesh generation  and  require  profound expertise and programming skills without the use of commercial software. In particular, it is challenging and time-consuming to implement the conventional methods for problems defined in complicated domains. 
 
As an efficient parametrization tool for high-dimensional functions \cite{Barron1993,E2019,bandlimit,Montanelli2019_2,SIEGEL2020313,HJKN19_814,Hutzenthaler2020_2,Shen4,Shen2021_2} with user-friendly software (e.g., TensorFlow and PyTorch), 
deep neural networks (DNNs) have become one of the most popular and important tools not only in computer science but also in other science and engineering problems with remarkable breakthroughs. 
For example, neural network-based PDE solvers dating back to the 1980s \cite{ODE1989,Lee1990,Gobovic1994,Dissanayake1994,lagaris1998artificial} were recently popularized for high-dimensional problems \cite{Han2018,Berg2018,Sirignano2018,Cai2019,Zang2020,Gu2020,RAISSI2019686,Khoo2017SolvingPP,Karpatne2017,Yucesan2020,Pan2020}. First of all, as a form of function approximation via the compositions of nonlinear functions \cite{IanYoshuaAaron2016}, DNNs are a mesh-free parametrization  and can efficiently approximate various high-dimensional solutions lessening the curse of dimensionality \cite{Barron1993,Hadrien,Weinan2019,Weinan2019APE,bandlimit,poggio2017,yarotsky19,Shen3,Montanelli2019_2,HJKN19_814,Shen4} and/or achieving exponential approximation rates  \cite{yarotsky2017,bandlimit,Shen3,DBLP:journals/corr/LiangS16,DBLP:journals/corr/abs-1807-00297,Opschoor2019,Shen4}. 
 Second, DNN parameters are identified via energy minimization from   the variational formulation. The computation of  variational formulation can be accelerated by SGD for a reasonably good minimizer. Developing the theoretical guarantee of these solvers has been an active research field recently \cite{DBLP:journals/corr/abs-1809-03062,Shin2020OnTC,LuoYang2020,Yulong21,Ziang22,Yuling}.  

Though learning high-dimensional functions admit the benefits mentioned above, the corresponding optimization problem is highly non-convex and thus challenging to solve for high accuracy. In the literature, extensive research has been conducted to improve the accuracy of neural network optimization. The following methods are listed as examples. Special neural networks are constructed to satisfy the initial/boundary conditions of the PDE aiming to simplify the optimization formulation and increase the accuracy \cite{712178,gu2020structure,Lyu2020EnforcingEB}. First-order methods are applied to reformulate high-order PDEs to reduce the difficulty of neural network optimization
\cite{cai2020deep,Lyu2020EnforcingEB}. New sampling strategies \cite{NakamuraZimmerer2019AdaptiveDL,Chen2019QuasiMonteCS} or important sampling \cite{pmlr-v107-liu20a,Gu2020} are proposed to  facilitate the convergence of neural network-based optimization. The special neural network structures or neural network solutions are constructed according to solution ansatz inspired by physical knowledge to significantly alleviate the training difficulty of neural network optimization including the use of oscillatory structures \cite{Cai2020}, multiscale structures \cite{Liu2020Jul}, and other spectral structures \cite{gu2020structure}. Combining neural network-based solvers and traditional iterative solvers, the hybrid algorithms  provide highly accurate solutions to low-dimensional nonlinear PDEs efficiently \cite{FP,huang2020int}.  

Besides the  application of deep learning to high-dimensional problems, deep learning also has advantages over traditional computational tools in certain low-dimensional problems. In computer vision and graphics, deep neural networks  as a mesh-free representation of objects, scene geometry and appearance,   
lead to  notable performance compared with traditional discrete representations. These DNNs, named ``coordinate-based" networks \cite{tancik2020fourier}, take low-dimensional coordinates as inputs
and output an object value of the shape, density, and/or color at the coordinate of the given input. This strategy is compelling in data compression and reconstruction  \cite{8953765,Jeruzalski2020NASANA,9157823,9010266,8954065,9156730,Saito2019PIFuPI,Sitzmann2019SceneRN,liang2021reproducing}. Similarly to the case of high-dimensional applications, it is also a challenging topic to achieve high accuracy in these applications.  It has been an active research direction to explore different neural network architectures and training strategies for highly accurate solutions to these problems.


This paper proposes  the neural energy descent (NED) algorithm for deep learning with theoretical justification in deep network approximation. We apply our NED for two applications: supervised learning and solving PDEs. The main philosophy of NED is to reformulate machine learning problems into the identification of the steady-state solution of an evolution equation, where the steady-state solution of this equation would provide a globally optimal solution to deep learning in the limit of infinite width. This equation corresponds to a gradient descent flow of a variational problem and hence the proposed time-dependent PDE solves an energy minimization problem to obtain the global minimizer of deep learning in the limit of infinite width. This gives a new interpretation and solution to deep learning optimization. We randomly sample the spatial domain of the PDE that leads to an efficient NED to enhance the computational complexity of the proposed energy descent method. Though the discretization of the evolution PDE, finite samples, and finite network width lead to numerical errors that would make the numerical solution deviate from the global minimizer of deep learning, from a series of numerical examples, we still observe the numerical advantage of NED over SGD.

This paper is organized as follows. In Section 2, two applications of NED are discussed.
Deep network approximation theory for NED is presented in Section 3.  In Section 4, we present the numerical implementation of NED. In Section 5, various numerical results are demonstrated to illustrate the advantages of NED over SGD. Finally, a conclusion is made in Section 6.

\section{Main Methodology of NED}
 In this section, we will  introduce the main methodology of NED to solve the machine learning problems: 1)  supervised learning using DNNs; 2) solving PDEs using DNNs.
 
\subsection{NED for Supervised Learning}
The learning task is to learn a DNN $U(x;\theta)$ with a parameter set $\theta$ such that $U(x;\theta)$ matches an unknown function $f(x)$ using a set of sample locations $\X =\{x_i\}_{i=1}^N$ and its corresponding function values $\Y=\{y_i=f(x_i)\}_{i=1}^N$. In NED, the corresponding evolution equation is the following ordinary differential equation (ODE):
	\begin{equation}
	\label{intro:problemODE}
	\left\{
		     \begin{array}{lr}
		    \partial_t u(x,t) + u(x,t) = f(x), &\quad   (x,t)\in \Omega\times[0,\infty], \\
		     u(x,0) = g(x), &\quad    x\in  \Omega,
		     \end{array}
	\right.
	\end{equation}
where $\X\subset\Omega \subset \mathbb{R}^d$, $f(x)$ is a given continuous function, and $g(x)$ is an arbitrary continuous function.  The solution of this ODE is $	u(x,t)=e^{- t}\left(  g(x)-f(x) \right)+f(x)$ and the steady state solution $u_s(x)$ satisfies $	u_s(x):=\lim_{t\rightarrow \infty} u(x,t)=f(x)$. Since $g(x)$ can be arbitrary, if the semi-discretization of $x$ via DNNs $U(x;\theta(t))$ is applied to \eqref{intro:problemODE}, we consider the following evolution equation of $\theta(t)$
		\begin{equation}
	\label{intro:problemODE2}
	\left\{
		     \begin{array}{lr}
		    \partial_t U(x;\theta(t)) + U(x;\theta(t)) = f(x), &\quad    (x,t)\in  \Omega\times[0,\infty], \\
		     \theta(0) = \theta_0, &
		     \end{array}
	\right.
	\end{equation}
which gives a gradient flow of $\theta(t)$ with an arbitrary initial condition $\theta_0$. The goal is to identify a steady-state solution $U(x;\theta^*):=\lim_{t\rightarrow \infty} U(x;\theta(t))= f(x)$ with $\theta^*=\lim_{t\rightarrow \infty} \theta(t)$ for $x\in\X\subset \Omega$ to solve the original supervised learning problem. The existence of a steady-state solution can be guaranteed if $f(x)$ is a neural network of the same size as $U(x;\theta(t))$; {otherwise, the existence would still be true in the limit of infinite network width.}

The evolution equation in \eqref{intro:problemODE} can be understood from a variational formulation. Consider the convex energy functional in $u(x,t)$ as follows
\begin{equation}
\label{eqn:fn1}
J_1(u(x,t);\mu)=\frac{1}{2}\int_\Omega (u(x,t)-f(x))^2 d\mu.
\end{equation}
\eqref{eqn:fn1} is a Lyapunov function for the evolution equation in \eqref{intro:problemODE} with a measure $\mu$ of $\Omega$. When the set of sample locations is a finite set $\mathcal{X}=\{x_i\}_{i=1}^N\subset\Omega$ and $\mu$ is the corresponding empirical measure of $\mathcal{X}$ in $\Omega$,  $J_1$ is the empirical loss function in supervised learning. When the sample set $\mathcal{X}=\Omega$ and $\mu$ is the standard Lebesgue measure of $\Omega$, $J_1$ is the population loss function in supervised learning. The following lemma derived by functional derivative shows that the gradient flow of \eqref{intro:problemODE} gives a solution $u(x,t)$ that minimizes the energy functional $J_1$ and the steady state solution $\lim_{t\rightarrow\infty}u(x,t)=f(x)$ is a global minimizer of the energy functional $J_1$.

\begin{lemma}
\label{lem:vf11}
Let $u(x,t)$ be the solution of \eqref{intro:problemODE} with an arbitrary initial condition $u(x,0)=g(x)$. There holds $\frac{\partial J_1(u(x,t);\mu)}{\partial t}=-\int_\Omega (\partial_t u(x,t))^2d\mu\leq 0$ and $\frac{\partial J_1(u(x,t);\mu)}{\partial t}=0$ if and only if $J_1(u(x,t);\mu)=0$.
\end{lemma}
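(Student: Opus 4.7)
The plan is to use the ODE \eqref{intro:problemODE} to rewrite $u-f$ as $-\partial_t u$, and then differentiate the quadratic energy functional under the integral sign. The computation is very short, so the proposal is really just a careful sequencing of the routine steps together with a brief justification of the iff clause.

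First I would interchange the time derivative and the spatial integral in $J_1(u(x,t);\mu)=\frac{1}{2}\int_\Omega (u-f)^2\, d\mu$; this is legitimate by dominated convergence because the integrand and its $t$-derivative are controlled by the explicit formula $u(x,t)=e^{-t}(g(x)-f(x))+f(x)$ from the paragraph preceding the lemma. This gives
\[
\frac{\partial J_1(u(x,t);\mu)}{\partial t}=\int_\Omega (u(x,t)-f(x))\,\partial_t u(x,t)\, d\mu.
\]
Next I would invoke the ODE in \eqref{intro:problemODE}, namely $\partial_t u(x,t)=f(x)-u(x,t)=-(u(x,t)-f(x))$, to substitute $u-f=-\partial_t u$ inside the integrand. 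This immediately yields $\frac{\partial J_1}{\partial t}=-\int_\Omega (\partial_t u)^2\, d\mu$, which is manifestly nonpositive, establishing the first assertion.

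For the iff statement I would argue in both directions. If $J_1(u(x,t);\mu)=0$, then $u=f$ holds $\mu$-almost everywhere, so the ODE forces $\partial_t u=0$ $\mu$-a.e., and hence $\int_\Omega (\partial_t u)^2 d\mu = 0$. Conversely, if $\frac{\partial J_1}{\partial t}=0$, then the nonnegative integrand $(\partial_t u)^2$ must vanish $\mu$-a.e., so $\partial_t u=0$ $\mu$-a.e., and then the ODE again gives $u-f=0$ $\mu$-a.e., which is precisely $J_1(u(x,t);\mu)=0$.

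There is no serious obstacle here; the only subtlety worth flagging is the interchange of $\partial_t$ and $\int_\Omega$, and this is immediate from the closed-form solution of \eqref{intro:problemODE} already exhibited in the text. The essential mechanism being highlighted is that the ODE is exactly the $L^2(\mu)$ gradient flow of $J_1$, which makes the identity $\partial_t J_1 = -\|\partial_t u\|^2_{L^2(\mu)}$ structural rather than computational.
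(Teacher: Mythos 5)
Your proof is correct and is exactly the computation the paper intends: the text offers no written proof of Lemma \ref{lem:vf11} (it only remarks that the lemma is ``derived by functional derivative''), and your argument --- differentiating under the integral, substituting $u-f=-\partial_t u$ from the ODE, and running the iff both ways via $\partial_t u=0$ $\mu$-a.e. $\Leftrightarrow$ $u=f$ $\mu$-a.e. --- is the standard gradient-flow calculation being alluded to. Your justification of the interchange of $\partial_t$ and $\int_\Omega$ via the closed-form solution $u(x,t)=e^{-t}(g(x)-f(x))+f(x)$ is a small but genuine improvement over the paper, which skips this point entirely.
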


\subsection{NED for Solving PDEs}	

The learning task is to learn a DNN $U(x;\theta)$ 
that solves a PDE.
For simplicity, we consider PDE problems with Dirichlet boundary conditions.   NED learns a DNN $U(x;\theta)$ matching an unknown function $u^*(x)$ that satisfies a given PDE on a sample set $\X$ in the PDE domain $\Omega\subset\R^d$ and matches a given function $h(x)$ on another sample set $\X_b$ on the boundary of the domain $\partial\Omega$. In a general setting, suppose the PDE is given as $\mL\left(u^*(x)\right)=f(u^*(x),x)$ for $x\in\X$, where $\mL$ is a given operator on $u^*(x)$ and $f(v,x)$ as a function of $v$ is known for $x\in\X$. This learning task in fact includes supervised learning when $\mL$ is an identity map and $f(v,x)=f(x)$ for any $v$. To be as general as possible, we consider the following time-dependent PDE in the NED framework:
	\begin{equation}
	\label{intro:problemPDE}
	\left\{
		     \begin{array}{lr}
		    \partial_t u(x,t) +\mL u= f(u,x), & (x,t) \in \Omega\times[0,\infty), \\
		 \partial_t u(x,t) +     u(x,t) =  h(x), &   (x,t)\in   \partial \Omega\times[0,\infty), \\
		     u(x,0) = g(x),  &   x \in   \Omega,\\
		     \end{array}
	\right.
	\end{equation}
	where $\X\subset \Omega \subset \R^d$, $\mL$ is a linear differential operator of order $n$, $u(x,t)\in \mathcal{W}^{n,p}(\Omega)\times C^m([0,\infty))$, $h\in \mathcal{W}^{n,p}(\partial\Omega)$, and $g\in \mathcal{W}^{n,p}(\Omega)$ is an arbitrary function. We assume that as $t\rightarrow \infty$, $u(x,t)$ converges to a steady state solution of \eqref{intro:problemPDE}, denoted by $u_s(x)$. It is easy to check $u_s(x)$ satisfies $\mL\left(u_s(x)\right)=f(u_s(x),x)$ in $\Omega$ and $u_s(x)=h(x)$ on $\partial\Omega$. When the semi-discretization via DNNs is applied to $u(x,t)$, we have
		\begin{equation}
	\label{intro:problemPDE2}
	\left\{
		     \begin{array}{lr}
		    \partial_t U(x;\theta(t)) +\mL U(x;\theta(t))= f(U(x;\theta(t)),x), &  (x,t)\in   \Omega\times[0,\infty), \\
		 \partial_t U(x;\theta(t)) +     U(x;\theta(t)) = h(x), & (x,t) \in   \partial \Omega\times[0,\infty), \\
		     \theta(0) = \theta_0,  &\\
		     \end{array}
	\right.
	\end{equation}
which gives a gradient flow of $\theta(t)$ with an arbitrary initial condition $\theta_0$. The goal is to identify a steady state solution $U(x;\theta^*)=\lim_{t\rightarrow \infty}U(x;\theta(t))$ of \eqref{intro:problemPDE2} to learn the unknown function  using DNNs satisfying $\mL\left(U(x;\theta^*)\right)= f(U(x;\theta^*),x)$ in $\Omega$ and $U(x;\theta^*)= h(x)$ on $\partial\Omega$. 

We can incorporate the boundary condition $U(x;\theta))=h(x)$ into the discretization by using  
$U(x;\theta)=L_D(x)N(x;\theta)+\ell(x)$, where $N(x;\theta)$ is a neural network to be trained, $L_D(x)$ is a function measuring the distance from $x$ to the boundary $\partial\Omega$, and $\ell(x)$ is an arbitrary function equal to $h(x)$ on $\partial\Omega$. Thus $U(x;\theta)=h(x)$ automatically holds true when $x\in \partial \Omega$.
As a result, the semi-discretization in (\ref{intro:problemPDE2}) is further simplified to 

		\begin{equation}
	\label{intro:problemPDE21}
	\left\{
		     \begin{array}{lr}
		    \partial_t U(x;\theta(t)) +\mL U(x;\theta(t))= f(U(x;\theta(t)),x), &   (x,t) \in   \Omega\times[0,\infty), \\
		     \theta(0) = \theta_0.  &\\
		     \end{array}
	\right.
	\end{equation}

The evolution equation in \eqref{intro:problemPDE} is complicated when the operator $\mL$ and the function $f(u,x)$ are nonlinear in $u$. The evolution equation in \eqref{intro:problemPDE} can also be understood from a variational formulation when $\mL$ and $f$ are linear in $u$. Let us start with the linear case, e.g.,
\begin{equation}\label{eqn:L}
\mL(u)=-\sum_{i,j=1}^d \frac{\partial}{\partial x_i}\left( A_{ij}(x) \frac{\partial u}{\partial x_j}\right)+c(x) u,
\end{equation}
where a matrix $A(x)=( A_{ij}(x))_{i,j=1}^d$. Without loss of generality, we assume $f(u,x)$ is independent of $u$. For simplicity of notations, we denote $f(u,x)=f(x)$. Consider the energy functionals in $u(x,t)$ as follows
\begin{equation}
\label{eqn:fn2}
J_2(u(x,t);\mu)=\frac{1}{2}\int_\Omega \left( \mL u\right)u(x,t)d\mu-\int_\Omega f(x)u(x,t)d\mu,
\end{equation}
and
\begin{equation}
\label{eqn:fn3}
J_3(u(x,t);\mu)=\frac{1}{2}\int_{\partial\Omega} (u(x,t)-h(x))^2 d\mu,
\end{equation}
which are the Lyapunov  functions for the first and second evolution equations in \eqref{intro:problemPDE}, respectively. Here $\mu$ stands for the standard Lebesgue measure if $\mathcal{X}=\Omega$ and $\mathcal{X}_b=\partial\Omega$; otherwise, it represents the empirical measure corresponding to discrete samples. Similarly to the discussion for supervised learning, the gradient flow of \eqref{intro:problemPDE} gives a solution $u(x,t)$ that minimizes the energy functional $J(u;\mu):=J_2(u;\mu)+J_3(u;\mu)$. If the operator $\mL$ is positive semidefinite, $J(u;\mu)$ is convex and the steady state solution $\lim_{t\rightarrow\infty}u(x,t)=f(x)$ is a global minimizer of $J(u;\mu)$.
A similar discussion can also be applied to \eqref{intro:problemPDE21}.

\section{Deep Network Approximation Theory for NED}
\label{sec:app}
As a first step to justify NED, we prove the existence of a neural network $U(x;\theta(t))$ in the semi-discretization in \eqref{intro:problemODE2} and \eqref{intro:problemPDE2} to approximate the solution $u(x,t)$ in the original model in \eqref{intro:problemODE} and \eqref{intro:problemPDE} uniformly well for all $t$, respectively. The overview of the approximation analysis is summarized in Figure \ref{fig:err2}. In particular, we are interested in  the convergence rate of the approximation in terms of the width and depth of DNNs. Furthermore, we pay specific attention to the smoothness of network parameter $\theta(t)$, which is required in \eqref{intro:problemODE2} and \eqref{intro:problemPDE2}. The existence is a result of a more general theory on deep network approximation in the Sobolev space. We will present our main theorems in this section and their proofs can be found in the appendix.

	\begin{figure}[!ht]
		\centering
		\includegraphics[width=0.8\linewidth]{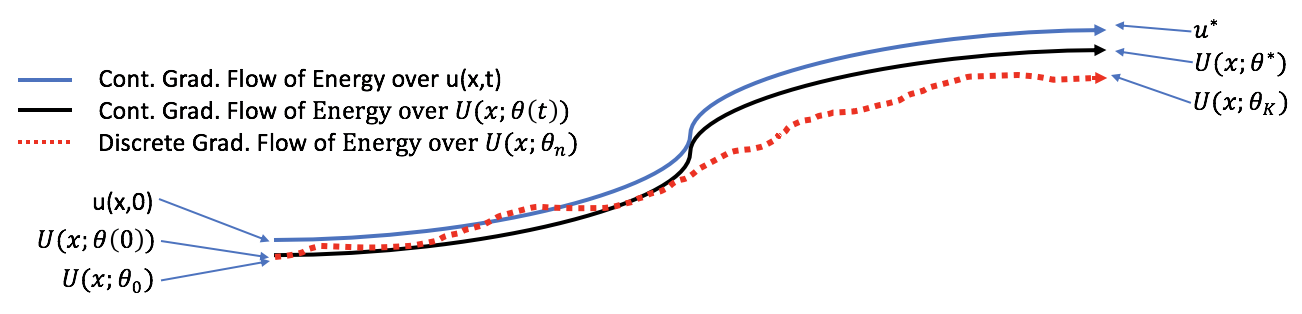}
		\caption{The overview of the deep network approximation analysis of NED. Continuous energy functional minimization provides a solution path $u(x,t)$ to a global minimizer of a learning problem as $t\rightarrow \infty$. We show that there exists a solution path $U(x;\theta(t))$ in the form of DNNs in a small neighborhood of $u(x,t)$. The approximation error of $U(x;\theta(t))$ to approximate $u(x,t)$ is characterized in terms of the DNN width $N$ and depth $L$. The goal of our numerical scheme is to generate a sequence of parameter sets $\{\theta_n\}_{n=1}^K$ such that $U(x;\theta_K)$ can approximate $U(x;\theta^*)=\lim_{t\rightarrow \infty} U(x;\theta(t))$ within a well-controlled error visualized by red dots. }
		\label{fig:err2}
	\end{figure}

Let us start with the introduction of DNNs as a nonlinear function parametrization and present the corresponding approximation theory later in this section. To make our discussion as general as possible, we will focus on the fully connected neural network (FNN) that includes various practical network structures as its special cases.

\begin{definition}\label{def:FNN}
Given $\bm{N}=[N_1,N_2,\dots,N_{L+1}]\in \N_+^d$ and $L\in \N_+$, a function $\Phi(x;\theta)$ is a fully connected neural network of $x\in\R^d$ with a parameter set $\theta$, width $\bm{N}$, and depth $L$ if $\Phi(x;\theta)={h}_{L+1}(x;\theta)$ is defined recursively via the composition of $L$ nonlinear functions as follows:
  \[
   h_i :=	\left\{
		     \begin{array}{lr}
		   {W}_i x+ {b}_i, & \text{if } i=1, \\
		   {W}_i \tilde{{h}}_{i-1} + {b}_i, & \text{if } i=2,\dots, L+1,
		     \end{array}
	\right.
	\]
      where
    \[
       \tilde{{h}}_i=\sigma_{s_i}({h}_i),\quad \tn{for $i=1$, $\dots$, $L$,}
    \]
 ${W}_i\in \R^{N_{i}\times N_{i-1}}$ and ${b}_i\in \R^{N_i}$ for $i=1$, $\dots$, $L+1$ are the weight matrix and the bias vector in the $i$-th linear transform in $\Phi$, respectively, $\sigma_{s_i}$ is called the nonlinear activation function with a parameter set $s_i$ and its action to a vector is entry-wise, $\theta$ is the union of all parameters in $\{W_i\}_{i=1}^{L+1}$, $\{b_i\}_{i=1}^{L+1}$, and $\{s_i\}_{i=1}^{L}$.
\end{definition}
For simplicity, we will let $N_1=N_2=\dots N_{L+1}=N\in \N_+$ throughout this paper and say $N$ is the width of the FNN.

\begin{definition}\label{def:s1}
If the activation function of an FNN is chosen as the rectified linear unit (ReLU), i.e., $\sigma(x)=\max\{x,0\}$, the corresponding FNN is called a $\sigma_1$-$\tn{NN}$ in this paper.
\end{definition}

It is obvious that $\sigma_1$-$\tn{NN}$s are piecewise linear functions and, hence, are capable of approximating classifier functions in classification problems as a special case of supervised learning problems, which can be solved via the evolution equation in \eqref{intro:problemODE}. However, the second derivative of $\sigma_1$-$\tn{NN}$s vanishes and, hence, $\sigma_1$-$\tn{NN}$s are not suitable for a more general learning task that can be solved by the evolution equation in \eqref{intro:problemPDE}. This motivates the introduction of $\sigma_2$-$\tn{NN}$s as follows.

\begin{definition}\label{def:s2}
If the activation function of an FNN is chosen as $\sigma_{a,b}(x)=a\odot \max\{x,0\}+b\odot x\odot\max\{x,0\}$ with two vectors $a\in \R^n$ and $b\in\R^n$ as parameters for $x\in\R^n$, where $\odot$ means the entry-wise multiplication, then the corresponding FNN is called a $\sigma_2$-$\tn{NN}$ in this paper.
\end{definition}

We are now ready to introduce our main theorems in the approximation theory for NED, e.g., $u(x,t)\approx U(x;\theta(t))$ in \eqref{intro:problemODE} and \eqref{intro:problemPDE} with $\theta(t)$ satisfying certain continuity. The approximation rates in our main theorems are not sharp. Following the ideas in \cite{Shen1,Shen2,Shen3,yarotsky18a,yarotsky19,Ingo,Opschoor2019}, nearly optimal approximation rates can be derived in the Sobolev space. However, $\theta(t)$ in these nearly optimal FNNs cannot be continuous in $t$ by the theory of optimal nonlinear approximation \cite{Ron} (Theorem 4.2), making the semi-discretization of evolution equations in \eqref{intro:problemODE2} and \eqref{intro:problemPDE2} invalid.
The proofs of these theorems can be found in the Appendix. Without loss of generality, we assume $\Omega\times \Omega_t=[0,1]^d\times [0,\infty)$.

Theorem \ref{thm:ap0} below quantitatively characterizes the approximation capacity of $\sigma_1$-$\tn{NN}$s in the $C^0(\Omega)\times C^m(\Omega_t)$ space so as to justify the semi-discretization schemes in \eqref{intro:problemODE2} for supervised learning, since ReLU activation functions are popular for supervised learning.

\begin{theorem}
\label{thm:ap0}
Let $m\in \N$, $d\in \N_+$, $1\leq p\leq \infty$. For any $N,L\in \N$ such that $N\geq 2d+14$ and $L\geq d^2-d+1$, any $f(x,t)\in C^0(\Omega)\times C^m(\Omega_t)$,  there exists a $\sigma_1$-$\tn{NN}$ $\Phi(x;\theta(t))$ such that
\[
\|\Phi(x;\theta(t))-f(x,t)\|_{L^p((0,1)^d)}
\leq 3d\cdot\omega_{f(x,t)}\left(\frac{1}{\left( \left\lfloor \frac{N-2-2d}{12} \right\rfloor \left\lfloor \frac{L}{d^2-d+1} \right \rfloor \right)^{1/d}-1}\right),
\]
where $\omega_f(\cdot)$ is the modulus of continuity of a function $f$ defined via
\begin{equation*}
\omega_f(r):= \sup\big\{|f({x})-f({y})|:{x},{y}\in [0,1]^d,\ \|{x}-{y}\|_2\le r\big\},\quad \tn{for any $r\ge 0$}.
\end{equation*}
Furthermore, $\theta(t)$ is in $C^m(\Omega_t)$.
\end{theorem}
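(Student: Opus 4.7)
The plan is to construct $\Phi(x;\theta(t))$ as a \emph{separable} $\sigma_1$-NN whose architecture is fixed (independent of $f$ and $t$) and whose parameters split into an $f$-independent geometric block and a block of scalar ``function-value'' parameters equal to the values of $f(\cdot,t)$ at a finite set of grid points. Inheriting the $t$-regularity from $f$ on this second block will give $\theta(t)\in C^m(\Omega_t)$.

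First I would invoke the static (fixed-$t$) $\sigma_1$-NN approximation theorem for continuous functions on $[0,1]^d$ in the spirit of Shen--Yang--Zhang \cite{Shen1,Shen2,Shen3,yarotsky18a,yarotsky19}. For any continuous $g:[0,1]^d\to\R$ and for $N\ge 2d+14$, $L\ge d^2-d+1$, this produces a ReLU network of the advertised width/depth satisfying
\[
\|g-\widehat{g}\|_{L^p([0,1]^d)} \;\le\; 3d\cdot\omega_g\!\left(\frac{1}{(\lfloor (N-2-2d)/12\rfloor\lfloor L/(d^2-d+1)\rfloor)^{1/d}-1}\right).
\]
I would then inspect the explicit construction: it partitions $[0,1]^d$ into a regular grid of $K^d$ subcubes with centers $\{x_\alpha\}$, uses a bit-extraction/selection subnetwork to produce signals that encode the index of the cube containing $x$, and aggregates the scalar values $\{g(x_\alpha)\}$ into a prescribed step-type approximant of $g$. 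The decisive structural point is that the weights and biases of the geometric/selection subnetwork depend only on $d$, $N$, $L$ and the partition, while the values $\{g(x_\alpha)\}$ enter linearly in a single layer of weights (or biases).

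Second I would let the target vary with $t$: define $v_\alpha(t):=f(x_\alpha,t)$. Since $f\in C^0(\Omega)\times C^m(\Omega_t)$, each map $t\mapsto v_\alpha(t)$ lies in $C^m(\Omega_t)$. Embedding the vector $(v_\alpha(t))_\alpha$ as the only $t$-dependent entries of $\theta(t)$ and keeping the geometric block constant in $t$ yields $\theta(t)\in C^m(\Omega_t)$. For each fixed $t$, the network $\Phi(\cdot;\theta(t))$ coincides with the Shen--Yang--Zhang approximant of $x\mapsto f(x,t)$, so the stated error bound follows directly with $\omega_{f(\cdot,t)}$ in place of $\omega_g$.

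The main obstacle is verifying the clean separation ``function values $\leftrightarrow$ parameters'' in the Shen--Yang--Zhang construction, since the function values are accessed through a bit-extraction mechanism that superficially entangles them with biases of several intermediate layers; one has to rewrite the relevant subnetwork so that $\{v_\alpha\}$ appear as a single linear combination in one output layer. If that rewriting were too costly, I would fall back on a simpler but slightly less efficient construction: build $f$-independent ReLU ``tent'' functions $\psi_\alpha(x)$ supported near $x_\alpha$ with $\sum_\alpha \psi_\alpha\equiv 1$, and set $\Phi(x;\theta(t))=\sum_\alpha v_\alpha(t)\,\psi_\alpha(x)$. This trivially inherits $C^m$-regularity in $t$ and yields the modulus-of-continuity bound; since the theorem claims only a non-sharp rate, worse explicit constants are acceptable.
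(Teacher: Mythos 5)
Your fallback construction is, in essence, the paper's actual proof: the paper takes Yarotsky's ``spike function'' partition of unity, writes
$\Phi(x;\theta(t))=\sum_{k\in\{0,\dots,K\}^d}\phi(Kx-k)\,f(k/K,t)$, notes that each $\phi(Kx-k)$ is exactly realized by a $\sigma_1$-NN of width $12+2d$ and depth $d^2-d+1$ built from $\min$-operators, arranges the $(K+1)^d$ blocks into an $a\times b$ grid to meet the width/depth budget (which is where the specific value of $K$, and hence the floor functions in the error bound, come from), and observes that the only $f$-dependent parameters are the coefficients $f(k/K,t)$ in the final linear combination, so $\theta(t)\in C^m(\Omega_t)$ follows immediately. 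So you should promote the fallback to the main argument and drop the first route entirely: the obstacle you describe with the bit-extraction constructions is not merely a matter of costly rewriting but is fatal for this theorem, since (as the paper itself remarks, citing Theorem 4.2 of DeVore--Howard--Micchelli on optimal nonlinear approximation) the parameters of those nearly rate-optimal networks cannot depend continuously on the target function, which is incompatible with the claim $\theta(t)\in C^m(\Omega_t)$. That is precisely why the paper settles for the non-sharp rate $\mathcal{O}((NL)^{-1/d})$.

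Two details in your fallback still need care if you want the theorem exactly as stated. First, ``tent functions'' must be chosen as continuous piecewise \emph{linear} hats on a simplicial partition (Yarotsky's spike function, representable via nested $\min$'s); tensor-product hats are piecewise multilinear and are not exactly ReLU-realizable, which would spoil the exact partition-of-unity identity. Second, the constant $3d$ and the precise argument of $\omega_{f(\cdot,t)}$ are tied to that specific spike function and to the block-arrangement bookkeeping relating $K$ to $N$ and $L$; ``worse explicit constants'' would prove a weaker statement than the one displayed.
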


As a simple corollary, we can characterize the approximation error of the semi-discretization scheme to the model in \eqref{intro:problemODE} when we apply $\sigma_1$-$\tn{NN}$s as follows.

\begin{corollary}
\label{col:ap0}
Let $m\in\N$, $d\in\N_+$, $1\leq p\leq \infty$. Suppose $u(x,t)\in C^0(\Omega)\times C^m(\Omega_t)$ is a solution of \eqref{intro:problemODE}, and $u(x,t)$ is Lipschitz continuous with a Lipschitz constant $\nu(t)$ for a fixed $t$.  For any $N,L\in \N$ such that $N\geq 2d+14$ and $L\geq d^2-d+1$, there exists a $\sigma_1$-$\tn{NN}$ $\Phi(x;\theta(t))$ with $\theta(t)$ in $C^m(\Omega_t)$ such that
\[
\|\Phi(x;\theta(t))-u(x,t)\|_{L^p(\Omega)}
\leq \frac{3d\nu(t)}{\left( \left\lfloor \frac{N-2-2d}{12} \right\rfloor \left\lfloor \frac{L}{d^2-d+1} \right \rfloor \right)^{1/d}-1}
\]
for a fixed $t$.
\end{corollary}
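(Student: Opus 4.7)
The plan is to obtain Corollary \ref{col:ap0} as an immediate specialization of Theorem \ref{thm:ap0}, exploiting the extra Lipschitz hypothesis to convert the generic modulus of continuity bound into the quantitative estimate stated. Since the hypothesis $u(x,t)\in C^0(\Omega)\times C^m(\Omega_t)$ matches the regularity assumption of Theorem \ref{thm:ap0}, and since the required lower bounds on $N$ and $L$ are the same, I can invoke the theorem directly on the function $u$ to produce a $\sigma_1$-$\NN$ $\Phi(x;\theta(t))$ with $\theta(t)\in C^m(\Omega_t)$ satisfying
\[
\|\Phi(x;\theta(t))-u(x,t)\|_{L^p((0,1)^d)} \le 3d\cdot \omega_{u(x,t)}\!\left(\frac{1}{\left(\left\lfloor \frac{N-2-2d}{12}\right\rfloor \left\lfloor \frac{L}{d^2-d+1}\right\rfloor\right)^{1/d}-1}\right).
\]

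The second step is to bound the modulus of continuity using the Lipschitz hypothesis. For a fixed $t$, the assumption that $u(\cdot,t)$ is Lipschitz continuous with constant $\nu(t)$ gives, by the definition of $\omega_f$,
\[
\omega_{u(x,t)}(r) = \sup\bigl\{|u(x,t)-u(y,t)|:x,y\in[0,1]^d,\ \|x-y\|_2\le r\bigr\} \le \nu(t)\, r
\]
for every $r\ge 0$. Substituting this linear upper bound into the conclusion of Theorem \ref{thm:ap0}, with $r$ chosen equal to the argument appearing there, immediately yields the claimed estimate
\[
\|\Phi(x;\theta(t))-u(x,t)\|_{L^p(\Omega)} \le \frac{3d\,\nu(t)}{\left(\left\lfloor \frac{N-2-2d}{12}\right\rfloor \left\lfloor \frac{L}{d^2-d+1}\right\rfloor\right)^{1/d}-1},
\]
and the smoothness assertion $\theta(t)\in C^m(\Omega_t)$ is inherited directly from Theorem \ref{thm:ap0} since we used it verbatim.

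There is essentially no obstacle here: the entire content of the corollary is packaged in the preceding theorem, and the Lipschitz hypothesis only serves to collapse the abstract modulus of continuity into an explicit linear factor $\nu(t)\cdot r$. The only points worth a sentence of care are that $\Omega=[0,1]^d$ matches the domain in Theorem \ref{thm:ap0}, and that the dependence on $t$ appears solely through $\nu(t)$ (the width/depth budget is chosen uniformly in $t$), so that the statement holds pointwise in $t$ as asserted.
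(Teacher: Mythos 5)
Your proposal is correct and is exactly the intended argument: the paper states Corollary \ref{col:ap0} as an immediate consequence of Theorem \ref{thm:ap0}, obtained by bounding the modulus of continuity via $\omega_{u(\cdot,t)}(r)\le \nu(t)\,r$ and substituting the specific radius from the theorem. Nothing further is needed.
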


As an immediate result of Corollary \ref{col:ap0}, if $\nu(t)$ is uniformly bounded for $t\in \Omega_t$, then the solution of \eqref{intro:problemODE2} can approximate the solution of \eqref{intro:problemODE} with an error $\OO(\left(N L  \right)^{-1/d})$, uniformly in $t$ measured in the $L^p$-norm for $p\in[1,\infty]$, and the steady state solution $U(x;\theta^*):=\lim_{t\rightarrow \infty}U(x;\theta(t))$ solves the corresponding learning task with an error $\OO(\left(N L  \right)^{-1/d})$ measured in the same norm.

Theorem \ref{thm:ap2} below quantitatively characterizes the approximation capacity of $\sigma_2$-$\tn{NN}$s in the Sobolev spaces so as to justify their applications to the semi-discretization schemes in \eqref{intro:problemODE2} and \eqref{intro:problemPDE2}, respectively. Typically, $\sigma_2$-$\tn{NN}$s are used for solving PDEs since $\sigma_1$-$\tn{NN}$s vanish after high-order derivatives. Other activation functions that do not vanish after high-order derivatives could also be applied. For simplicity, we only focus on $\sigma_2$-$\tn{NN}$s in our approximation theory.

\begin{theorem}\label{thm:ap2}
Let $m,d\in \N$, $n\in \N_+$, $1\leq p\leq \infty$, and $s\in \N$ with $0\leq s\leq n-1$. For any $\epsilon\in(0,\frac{1}{2})$ and any $f(\xx,t)\in \Wnp(\Omega)\times C^m(\Omega_t)$, we have  
\begin{enumerate}[label=(\roman*)]
\item There exists a $\sigma_2$-$\tn{NN}$ $\Phi_1(\xx;\xth_1(t))$ and $C_s =C_s(n,d,p)$ such that
\begin{enumerate}
\item $\|\Phi_1(\xx;\xth_1(t))-f(\xx,t)\|_{\Wsp((0,1)^d)}\leq \epsilon $;
\item The depth of $\Phi_1(\xx;\xth_1(t))$ is at most $1+(4+2(n-s))d^{n-s}$;
\item The width of $\Phi_1(\xx;\xth_1(t))$ is at most $2d+2+4\left(  \frac{C_s \|f\|_{\Wnp((0,1)^d)}}{\epsilon} +2^{n-s} \right)^{d/(n-s)}$.
\end{enumerate}
\item For any $ {N}, {L}\in \N$ such that $ {N}\geq 2d+6$ and $ {L}\geq (4+2(n-2))d^{n-s}$, there exists a $\sigma_2$-$\tn{NN}$ $\Phi_2(\xx;\xth_2(t))$ such that
\[
\|\Phi_2(\xx;\xth_2(t))-f(\xx,t)\|_{\Wsp((0,1)^d)}
\leq \frac{\bar{C}_s  \|f\|_{\Wnp((0,1)^d)} }{\left( {N} {L} \right)^{(n-s)/d}},
\]
where $\bar{C}_s =\bar{C}_s(n,d,p)$.
\end{enumerate}
Furthermore, $\xth_1(t)$ and $\xth_2(t)$ are in $C^m(\Omega_t)$.
\end{theorem}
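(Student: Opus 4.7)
The plan is to construct $\Phi$ as a $\sigma_2$-NN implementation of the classical local-Taylor-polynomial plus partition-of-unity approximation over a uniform grid of $[0,1]^d$, and to trace how the assumed $C^m$-in-$t$ regularity of $f$ propagates into the network weights. First, partition $[0,1]^d$ into $K^d$ subcubes with centers $\{\xk/K\}_{\xk\in\{0,\dots,K-1\}^d}$ and, at each center, define the order-$(n-s-1)$ Taylor polynomial
\[
P_{\xk}(\xx,t)=\sum_{|\xal|\leq n-s-1}\frac{1}{\xal!}\,(\partial_{\xx}^{\xal} f)(\xk/K,t)\,(\xx-\xk/K)^{\xal}.
\]
A standard Bramble--Hilbert estimate yields $\|f(\cdot,t)-P_{\xk}\|_{\Wsp(\text{cell})}\leq C\,K^{-(n-s)}\|f(\cdot,t)\|_{\Wnp(\text{cell})}$ on each subcube. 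I would then choose a $\sigma_2$-realizable partition of unity $\{\phi_{\xk}\}$ whose supports form a bounded-overlap cover of $[0,1]^d$, and set $\Phi(\xx,t)=\sum_{\xk}\phi_{\xk}(\xx)\,P_{\xk}(\xx,t)$ as the target function to be realized by the NN.

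Next I would realize $\Phi$ by a concrete $\sigma_2$-NN. The key algebraic fact is that $\sigma_2$ reproduces the square exactly (a fixed combination of $\sigma_{a,b}(\pm x)$ recovers $x^2$), and hence the product $xy=\frac{1}{4}((x+y)^2-(x-y)^2)$ exactly. Iterating this multiplication gadget, every monomial $(\xx-\xk/K)^{\xal}$ with $|\xal|\leq n-s-1$ is computed by a subnetwork whose depth is $\OO((n-s)d^{n-s})$, which yields the stated depth bound $1+(4+2(n-s))d^{n-s}$. The partition functions $\phi_{\xk}$ are taken as tensor products of one-dimensional $\sigma_2$-NN bumps whose construction adds only a constant to the depth and width. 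Assembling the $K^d$ local terms in parallel and summing them produces width $2d+2+4\bigl(C_s\|f\|_{\Wnp}/\epsilon+2^{n-s}\bigr)^{d/(n-s)}$ after choosing $K\sim(\|f\|_{\Wnp}/\epsilon)^{1/(n-s)}$, which matches (i); part (ii) follows immediately by solving the error bound in (i) for $\epsilon$ in terms of $N$ and $L$. The $C^m(\Omega_t)$ regularity of $\xth(t)$ is then obtained by observing that the only $t$-dependent parameters in the architecture are the Taylor coefficients $(\partial_{\xx}^{\xal} f)(\xk/K,t)$, which are $C^m$ in $t$ by hypothesis; every other weight (shift/scale, partition-of-unity shape, polynomial gadget) is a $t$-independent constant.

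The hardest step is the $\Wsp$-norm, rather than merely $L^p$-norm, control of the global error $\sum_{\xk}\phi_{\xk}(f-P_{\xk})$. By Leibniz's rule, derivatives up to order $s$ of this product couple derivatives of $\phi_{\xk}$ of order $j$, scaling as $\OO(K^{j})$, with derivatives of $f-P_{\xk}$ of order $s-j$, whose $L^p$ norm on a cell is $\OO(K^{-(n-s)+(s-j)})$; the product is $\OO(K^{-(n-s)})$ uniformly in $j$. This delivers the $K^{-(n-s)}$ rate per cell provided the partition of unity has $s$-th derivatives bounded by $\OO(K^s)$ and the overlap is bounded by $2^d$, after which summing in $L^p$ across cells preserves the rate and yields the factor $\|f\|_{\Wnp}$. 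A secondary care point is to verify that the $\sigma_2$ implementation of the partition-of-unity bumps and of the multiplication gadget uses only time-independent weights, so that no additional regularity constraints in $t$ enter and $\xth(t)\in C^m(\Omega_t)$ follows solely from $f\in \Wnp(\Omega)\times C^m(\Omega_t)$.
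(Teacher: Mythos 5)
Your overall architecture (partition of unity realized exactly by $\sigma_2$-NNs, exact square/product gadgets, parallel assembly of $K^d$ local terms, and the width/depth bookkeeping) matches the paper's construction, and your treatment of the Leibniz-rule coupling between derivatives of $\phi_{\xk}$ and of the local error is the right mechanism for the $\Wsp$ estimate. However, there is a genuine gap at the very first step: you build the local polynomials from \emph{pointwise} Taylor coefficients $(\partial_{\xx}^{\xal}f)(\xk/K,t)$. For a general $f\in\Wnp(\Omega)$ with $1\le p\le\infty$, the weak derivatives $D^{\xal}f$ are only defined up to null sets, so their values at the grid points $\xk/K$ need not exist, and the classical Taylor remainder estimate you invoke is not available. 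The Bramble--Hilbert lemma, which you cite, is a statement about \emph{averaged} Taylor polynomials $Q^n f(\xx,t)=\int_B T^n_{\yy}f(\xx,t)\phi(\yy)\,d\yy$; this is exactly why the paper (Lemma \ref{lem:atpa}, via Definition \ref{def:atp}) averages the Taylor expansion over a ball inside each cell. The same issue infects your regularity argument for $\xth(t)$: you cannot conclude that $(\partial_{\xx}^{\xal}f)(\xk/K,t)$ is $C^m$ in $t$ ``by hypothesis,'' since the hypothesis controls $t$-regularity of $f$ as a $\Wnp$-valued map, not of pointwise values of its spatial weak derivatives. The paper instead writes each coefficient as $\int_B \frac{1}{\xal!}D^{\xal}\tilde f(\yy,t)\,\yy^{\bar\xal}\phi_{\xk_i}(\yy)\,d\yy$ and obtains $C^m(\Omega_t)$ by dominated convergence.

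A second, quantitative problem: you take the Taylor polynomial of order $n-s-1$ (degree at most $n-s-1$). Applying Bramble--Hilbert to a polynomial of that order gives, in the $\Wsp$ seminorm, an error of size $K^{-(n-2s)}\lVert f\rVert_{\WW^{n-s,p}}$ rather than the claimed $K^{-(n-s)}\lVert f\rVert_{\Wnp}$; for $s\ge 1$ this loses a full factor of $K^{-s}$. To obtain the rate $K^{-(n-s)}$ in $\Wsp$ one must retain the full order-$n$ averaged Taylor polynomial and then measure the remainder in the $s$-th seminorm, which is what Lemma \ref{lem:BH} with $k=s$ delivers. Finally, your remark that part (ii) ``follows immediately by solving for $\epsilon$'' skims over the point of that part: given a prescribed width budget $N$ and depth budget $L$, the parallel blocks of part (i) must be rearranged into an $a\times b$ grid (trading width for depth, with shared identity and partial-sum channels) before the $\left(NL\right)^{-(n-s)/d}$ rate can be read off; the naive parallel construction only uses the width.
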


As a corollary, we can characterize the approximation error of the semi-discretization scheme applied to solve PDEs when we apply $\sigma_2$-$\tn{NN}$s as follows.


\begin{corollary}
\label{col:ap2}
Let $m,d\in\N$, $n\in\N_+$, $1\leq p\leq \infty$, and $s\in\N$ with $0\leq s\leq n-1$. Suppose  $u(x,t)\in\Wnp(\Omega)\times C^m(\Omega_t)$ is a solution of \eqref{intro:problemPDE}, and $u(x,t)$ is in $C^s(\overline{\Omega})$ for any $t\in \Omega_t$.
For any $N,L\in \N$ such that $N\geq 2d+6$ and $L\geq (4+2(n-2))d^{n-s}$, for any $s$-th order differential operator $\mL$ with all coefficients absolutely bounded by $\beta$, there exists a $\sigma_2$-$\tn{NN}$ $\Phi(x;\theta(t))$ such that
\[
\|\Phi(x;\theta(t))-u(x,t)\|_{\Wsp(\Omega)}
\leq \frac{{C}_{1,s} \|u(x,t)\|_{\Wnp(\Omega)} }{\left(N L  \right)^{(n-s)/d}},
\]
\[
\|(\partial_t+\mL)(\Phi(x;\theta(t))-u(x,t))\|_{L^p(\Omega)}
\leq \frac{{C}_{2,s}( \|\partial_t u\|_{\Wnp(\Omega)} +\|u\|_{\Wnp(\Omega)})}{\left(N L  \right)^{(n-s)/d}},
\]
\[
\|\Phi(x;\theta(t))-u(x,t)\|_{\Wsp(\partial\Omega)}
\leq \frac{{C}_{3,s} \|u(x,t)\|_{\WW^{n,\infty}(\Omega)} }{\left(N L  \right)^{(n-s)/d}},
\]
with $\theta(t)$ in $C^m(\Omega_t)$, where ${C}_{1,s}={C}_{s}(n,d,p)$, ${C}_{2,s}={C}_{2,s}(n,d,p,\beta)$, and ${C}_{3,s}={C}_{3,s}(n,d,p)$.
\end{corollary}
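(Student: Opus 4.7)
The plan is to derive all three inequalities from Theorem~\ref{thm:ap2}(ii) applied to $u(x,t)$ (and to $\partial_t u$ for the time-derivative piece), combined with the triangle inequality and an embedding from $\WW^{s,\infty}(\Omega)$ to $\Wsp(\partial\Omega)$. Since the hypotheses $N\ge 2d+6$ and $L\ge (4+2(n-2))d^{n-s}$ exactly match those of Theorem~\ref{thm:ap2}(ii), a single $\sigma_2$-$\tn{NN}$ $\Phi(x;\theta(t))$ with $\theta(t)\in C^m(\Omega_t)$ will serve all three estimates.

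The first inequality is the direct output of Theorem~\ref{thm:ap2}(ii) on $u(\cdot,t)\in\Wnp(\Omega)\times C^m(\Omega_t)$, with $C_{1,s}=\bar C_s$. Since the local-polynomial construction underlying Theorem~\ref{thm:ap2}(ii) is independent of the target Lebesgue exponent, the same network also produces a $\WW^{s,\infty}(\Omega)$ bound with $\|u\|_{\WW^{n,\infty}(\Omega)}$ on the right-hand side. The third inequality then follows by boundary restriction:
\[
\|\Phi-u\|_{\Wsp(\partial\Omega)}\le |\partial\Omega|^{1/p}\,\|\Phi-u\|_{\WW^{s,\infty}(\Omega)},
\]
after which $|\partial\Omega|^{1/p}$ is absorbed into $C_{3,s}$.

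For the second inequality, I would split
\[
\|(\partial_t+\mL)(\Phi-u)\|_{L^p(\Omega)}\le \|\mL(\Phi-u)\|_{L^p(\Omega)}+\|\partial_t(\Phi-u)\|_{L^p(\Omega)}.
\]
Since $\mL$ is linear of order $s$ with coefficients absolutely bounded by $\beta$, the first summand is at most $C\beta\,\|\Phi-u\|_{\Wsp(\Omega)}$, controlled by the first inequality and yielding the $\|u\|_{\Wnp(\Omega)}$ contribution. For the second summand, I would apply Theorem~\ref{thm:ap2}(ii) to $\partial_t u\in\Wnp(\Omega)\times C^{m-1}(\Omega_t)$ and observe that, in the construction of that theorem, $\theta(t)$ arises as $t$-smooth linear functionals of the approximated function (nodal values or local polynomial coefficients). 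Consequently $\partial_t\Phi(x;\theta(t))$ is a $\sigma_2$-$\tn{NN}$ of the same architecture whose parameters are $\partial_t\theta(t)$ and which approximates $\partial_t u$ at the claimed rate; absorbing $\beta$ and $\bar C_s$ into $C_{2,s}$ closes the bound.

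The main obstacle is justifying this last identification: $\partial_t\Phi(x;\theta(t))$ must inherit the approximation rate for $\partial_t u$. This requires the construction in Theorem~\ref{thm:ap2} to depend on the approximated function in a (piecewise) linear manner, so that $t$-differentiation of the parameters corresponds to replacing $u$ by $\partial_t u$ in the data. If this linearity is not transparent from the proof, the fallback is to build a joint $\sigma_2$-$\tn{NN}$ simultaneously approximating the pair $(u,\partial_t u)$ at the cost of only a constant factor in width, still yielding the same asymptotic rate.
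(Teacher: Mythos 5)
Your argument for the first and third bounds coincides with the paper's: the interior estimate is read off directly from Theorem~\ref{thm:ap2}(ii), and the boundary estimate is obtained by passing to $\WW^{s,\infty}$ (where the same network and the same averaged-Taylor construction give the rate with $\|u\|_{\WW^{n,\infty}(\Omega)}$, since the local polynomial coefficients do not depend on the Lebesgue exponent) and then restricting to $\partial\Omega$, the hypothesis $u(\cdot,t)\in C^s(\overline{\Omega})$ being what makes the restriction of the $\WW^{s,\infty}(\Omega)$ bound to the boundary legitimate. Where you go beyond the paper is the second bound: the paper simply declares it ``a direct result of Theorem~\ref{thm:ap2}'' without addressing the $\partial_t$ term, whereas you split off $\|\mL(\Phi-u)\|_{L^p}\leq C\beta\|\Phi-u\|_{\Wsp(\Omega)}$ and then must control $\|\partial_t(\Phi-u)\|_{L^p}$. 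The linearity you flag as the potential obstacle does hold in the paper's construction: the only $t$-dependent parameters of $\Phi_2$ are the averaged Taylor coefficients $c_{f,\xk,\xal,s}(t)$, which are linear in $f$ (integrals of $D^{\xal}\tilde f(\cdot,t)$ against fixed cut-off data) and enter the network output linearly, since $\Phi_2(\xx;\theta(t))=\sum_{\xk}\phi_{\xk}(\xx)\sum_{\xal}c_{f,\xk,\xal,s}(t)\xx^{\xal}$ exactly. Hence $\partial_t\Phi_2=\sum_{\xk}\phi_{\xk}\,p_{\partial_t f,\xk,t,s}$ is precisely the localized polynomial approximant of $\partial_t u$, and Lemma~\ref{lem:atpa} gives the claimed rate with $\|\partial_t u\|_{\Wnp(\Omega)}$ (one does need to assume $\partial_t u\in\Wnp(\Omega)$, which the statement presupposes by putting that norm on the right-hand side). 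So your route is correct and in fact supplies the justification the paper's one-line proof omits; your fallback of jointly approximating $(u,\partial_t u)$ is unnecessary here but would be the right repair if the construction were not linear in the target.
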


\begin{proof}
The error bound on $\Omega$ is a direct result of Theorem \ref{thm:ap2}. The error bound on $\partial \Omega$ holds by the fact that
\begin{eqnarray*}
\|\Phi_j(x;\theta_j(t))-u(x,t)\|_{\Wsp(\partial\Omega)}&\leq & c \|\Phi_j(x;\theta_j(t))-u(x,t)\|_{\WW^{s,\infty}(\partial\Omega)}\\
(u(x,t) \in C^s(\bar{\Omega})\text{ for any }t) &\leq & c \|\Phi_j(x;\theta_j(t))-u(x,t)\|_{\WW^{s,\infty}(\Omega)}\\
(\text{Theorem \ref{thm:ap2}}) &\leq & \frac{{C}_{3,s} \|u(x,t)\|_{\WW^{n,\infty}(\Omega)} }{\left(N L  \right)^{(n-s)/d}},
\end{eqnarray*}
where $c=c(s,d,p)$ and ${C}_{3,s}={C}_{3,s}(n,d,p)$.
\end{proof}

As an immediate result of Corollary \ref{col:ap2}, if $ \|u(x,t)\|_{\Wnp(\Omega)}$ and $ \|u(x,t)\|_{\WW^{n,\infty}(\Omega)}$ are uniformly bounded for $t\in \Omega_t$, then there exists a solution of \eqref{intro:problemPDE2} (and \eqref{intro:problemODE2}) approximating the solution of \eqref{intro:problemPDE} (and \eqref{intro:problemODE}) with an error $\OO(\left(N L  \right)^{-(n-s)/d})$, uniformly in $t$ measured in the $\Wsp$-norm, and the steady state solution $U(x;\theta^*):=\lim_{t\rightarrow \infty}U(x;\theta(t))$ solves the corresponding learning task with an error $\OO(\left(N L  \right)^{-(n-s)/d})$ measured in the $\Wsp$-norm.

\section{Numerical Implementation of NED}
\label{sec:num}

In this section, we will introduce the numerical implementation of NED for the evolution equation \eqref{intro:problemODE2} for supervised learning and the evolution equation \eqref{intro:problemPDE2} for solving PDEs. 

\subsection{Numerical Implementation of NED for Supervised Learning}
\label{sub:NED1}
In the supervised learning case when the sample set $\X=\Omega$, we will randomly sample $N$ locations in $\Omega$ to discretize the domain $\Omega$ at each time slide $t$ and denote the sample set as $\X_t$. When $\X$ is a set of finite locations, we let $\X_t=\X$.

After the semi-discretization of \eqref{intro:problemODE} with $\X_t$, the resulting evolution equation of $\theta(t)$ in \eqref{intro:problemODE2} becomes
	\begin{equation}
		\label{odeODE}
\nabla_\theta U(\X_t;\theta(t))\dot{\theta}(t)=\mathbf{R}(\X_t;\theta(t)),
	\end{equation}
	with an initial condition $\theta(0)=\theta_0$, where $\mathbf{R}(\X_t;\theta(t)) = f(\X_t) - U(\X_t;\theta(t))$
	is a nonlinear mapping from $\theta(t)\in \mathbb{R}^{|\theta|}$ to $ \mathbf{R}(\X_t;\theta(t)) \in \mathbb{R}^N$ measuring the regression error of the DNN on $\X_t$, where $|\theta|$ denotes the total number of parameters in $\theta$ and $N$ is the total number of training data in $\X_t$. Note that
\[
\nabla_\theta U(\X_t;\theta(t)) = - \nabla_\theta\mathbf{R}(\X_t;\theta(t)).
\]
 Hence, we get
\begin{equation}\label{odematODE}
 - \nabla_\theta\mathbf{R}(\X_t;\theta(t))\dot{\theta}(t)=\mathbf{R}(\X_t;\theta(t)),
\end{equation}
where $ - \nabla_\theta\mathbf{R}(\X_t;\theta(t))\in\mathbb{R}^{N\times|\theta|}$, $\dot{\theta}(t)\in\mathbb{R}^{|\theta|}$, and $\mathbf{R}(\X_t;\theta(t))\in\mathbb{R}^N$. The gradient flow of $\theta(t)$ is given via a solution of a possibly under-determined linear system in \eqref{odematODE} (e.g., when $N<|\theta|$) and hence the gradient flow in NED may not be unique. It is application-dependent to choose a gradient flow that best fits a specific task. Without loss of generality, we propose to solve the following least square problem
   \begin{eqnarray}\label{eqn:ls}
   \min_{\theta(t)}&& \|\dot{\theta}(t)\|_*+
   \| \mathbf{R}(\X_t;\theta(t)) + \nabla_\theta\mathbf{R}(\X_{t};\theta(t))  \dot{\theta}(t) \| _+
   \end{eqnarray}
   with the constraint $\theta(0)=\theta_0$ and two appropriate norms $\|\cdot\|_*$ and $\|\cdot\|_+$ to determine the gradient flow of $\theta(t)$.

If $\|\cdot\|_*=\|\cdot\|_2$, the least square problem \eqref{eqn:ls} has an explicit solution, which is equivalent to solving \eqref{odematODE} using the Moore-Penrose inverse to obtain the following gradient flow
\begin{equation}\label{odeflowUnderPara}
\dot{\theta}(t)=-\left( \nabla_\theta\mathbf{R}(\X_t;\theta(t))^* \nabla_\theta\mathbf{R}(\X_t;\theta(t)) \right)^{+} \nabla_\theta\mathbf{R}(\X_t;\theta(t))^*\mathbf{R}(\X_t;\theta(t)),
\end{equation}
where $^*$ represents the conjugate transpose and $^+$ means the Moore-Penrose inverse.

Now we discretize the implicit gradient flow in \eqref{eqn:ls} in time $t$ with $\{t_k\}_{k=0}^K$ and identify $\{\theta_k\}_{k=0}^K$ via a $q$-th order explicit Runge-Kutta method. When $q=1$, the Runge-Kutta method becomes the forward Euler method. To this end, we propose Algorithm \ref{alg1ODE} below. For simplicity, we will present our algorithm for $q=1$ as an example.

\begin{algorithm}
 \KwData{A time interval $[0,T]$ and its uniform discretization points $\{t_k\}_{k=0}^K$. Training data $\{\X_{t_k},f(\X_{t_k})\}_{k=0}^K$. A random initial parameter set $\theta_0$.}
 \KwResult{$\{\theta_k\}_{k=0}^K$ }

  \For{$k=0,1,\dots,K-1$}{

  Evaluate $\nabla_\theta\mathbf{R}(\X_{t_k};\theta_k) $ and $b=
  \mathbf{R}(\X_{t_k};\theta_k)$\;

   Solve the least square problem
   \begin{eqnarray}\label{eqn:ls2}
   \min_{\alpha}&& \|\alpha\|_*\\
   -\nabla_\theta\mathbf{R}(\X_{t_k};\theta_k)  \alpha&=& \mathbf{R}(\X_{t_k};\theta_k)\nonumber
   \end{eqnarray}
   with an appropriate norm $\|\cdot\|_*$ as regularization\;

   Compute $\theta_{k+1}= \theta_k + \frac{T}{K} \alpha $ \; 
  }

 \caption{NED for supervised learning by using the forward Euler method. }
 \label{alg1ODE}
\end{algorithm}

\subsubsection{Stochastic Gradient Descent Method as a Comparison}
Note that the solution of \eqref{intro:problemODE} approaches to the solution of the following regression problem when $t\to\infty$,
	\begin{equation}
	\label{problem2ODE}
\theta=\arg\min \mathbb{E}_{\X} \mathbf{R}^2(\X;\theta).
\end{equation}
The gradient flow of the SGD reads
\begin{equation}\label{odeflow2}
\dot{\theta}=\frac{2}{N}\nabla_\theta\mathbf{R}(\X;\theta(t))^* \mathbf{R}(\X;\theta(t)).
\end{equation}
Therefore, the SGD updates in the range space of $
\nabla_\theta\mathbf{R}(\X;\theta(t))$ while our algorithm updates the similar gradient flow $\nabla_\theta\mathbf{R}(\X;\theta(t))$   using the Moore-Penrose inverse. The  numerical results in Section 5  will demonstrate that the method in Algorithm \ref{alg1ODE} is better.

\subsection{Numerical NED for Solving PDEs}
\label{sub:NED2}
After the semi-discretization of \eqref{intro:problemPDE}, the resulting evolution equation of $\theta(t)$ in \eqref{intro:problemPDE2} is equivalent to
	\begin{equation}
		\label{ode}
\nabla_\theta U(x;\theta(t))\dot{\theta}(t)=\mathbf{R}(x;\theta(t))\text{ for }(x,t)\in \Omega
\times[0,T]	\end{equation}
	with an initial condition
	\begin{equation}\label{int}
	\theta(0)=\theta_0 \text{ such that } U(x;\theta_0)=u_0(x),
	\end{equation}
	and a boundary condition
	\begin{equation}\label{bd}
	\mathbf{B}(x;\theta(t)):= U(x;\theta(t)) - h(x)=0\text{ for } x\in\partial\Omega.
	\end{equation}
Here \[\mathbf{R}(x;\theta(t))=	\Delta U(x;\theta(t)) + f(U(x;\theta(t)))\]
is a nonlinear mapping from $\theta(t)\in \mathbb{R}^{|\theta|}$ to $ \mathbf{R}(x;\theta(t)) \in \mathbb{R}$ for a fixed sample $x$.

We choose a set of $N$ samples in $\Omega$ denoted by $\X_{in}=\{x_i\}_{i=1}^N$ and a set of $M$ samples in $\partial\Omega$ denoted by $\X_{bd}=\{x_j\}_{j=1}^M$. 
Substituting $\X_{in}$ to \eqref{ode} yields $\nabla_\theta U(x_i;\theta(t))\dot{\theta}(t)=\mathbf{R}(x_i;\theta(t))$ for $i=1,\dots,N$; i.e.,
\begin{equation}\label{odemat}
\nabla_\theta U(\X_{in};\theta(t))\dot{\theta}(t)=\mathbf{R}(\X_{in};\theta(t)),
\end{equation}
where $\nabla_\theta U(\X_{in};\theta(t))\in\mathbb{R}^{N\times|\theta|}$, $\dot{\theta}(t)\in\mathbb{R}^{|\theta|}$, and $\mathbf{R}(\X_{in};\theta(t))\in\mathbb{R}^N$.

By differentiating $\mathbf{B}(x;\theta(t))$ with respect to $t$, we have $\nabla_\theta U(x_j;\theta(t)) \dot{\theta}(t) =0$ for $ j=1,\cdots, M$; i.e.,
\begin{equation}
\nabla_\theta U(\X_{bd};\theta(t)) \dot{\theta}(t) =0 ,
\label{ODE3}
\end{equation}
where $\nabla_\theta\mathbf{B}(\X_{bd};\theta(t))=\nabla_\theta U(\X_{bd};\theta(t))\in \mathbb{R}^{M\times |\theta|}$, the basis vectors of {whose} kernel space form a matrix $D\in\mathbb{R}^{|\theta|\times r}$ for some $r\in\mathbb{N}$.

{Instead of differentiating the boundary condition}, we should solve
\begin{equation}
\nabla_\theta U(\X_{bd};\theta(t)) \dot{\theta}(t) =-\mathbf{B}(x;\theta(t)).
\label{ODE2}
\end{equation}

Hence, after discretizing \eqref{ode} and \eqref{bd} with point sets $\X_{in}$ and $\X_{bd}$, we need to solve both (\ref{odemat}) and (\ref{ODE2}) with the initial condition in \eqref{int}.

Now we discretize in time $t$ with $\{t_k\}_{k=0}^K$ and identify $\{\theta_k\}_k$ to obtain our final solution $U(x,\theta_k)$. To this end, we propose Algorithm \ref{alg1} below.  Actually similar to Algorithm \ref{alg1ODE}, the $q$-th order explicit Runge-Kutta method can also be applied to Algorithm \ref{alg1} to update and identify the series $\{\theta_k\}_{k=0}^K$. The following Algorithm \ref{alg1} is also presented as the example when $q=1$ (forward Euler method).

\begin{algorithm}
 \KwData{$f(x)$, $u_0(x)$, and $\tilde{u}(x)$. A time interval $[0,T]$ and its uniform discretization points $\{t_k\}_{k=0}^K$.}
 \KwResult{$\{\theta_k\}_{k=0}^K$ }
 Apply deep learning to find $\theta_0$ and the corresponding DNN $U(x;\theta_0)$ such that $U(x;\theta_0)\approx u_0(x)$.

  \For{$k=0,1,\dots,K-1$}{
   Randomly select spatial discretization point sets $\X_{in}\subset \Omega$ and $\X_{bd}\subset\partial\Omega$.

  Evaluate $\mathcal{A}=\begin{pmatrix}
  \nabla_\theta U(\X_{in};\theta_k)\\
  \nabla_\theta U(\X_{bd};\theta_k)
  \end{pmatrix}$ and $b=\begin{pmatrix}
  \mathbf{R}(\X_{in};\theta_k)\\
  -\mathbf{B}(\X_{bd};\theta_k)
  \end{pmatrix}$\;

Find $\alpha$ such that it solves the least square problem 
   $\mathcal{A}\alpha\approx b$\;
   
   
   Compute $\theta_{k+1}= \theta_k + \frac{T}{K} \alpha$\;
  }

 \caption{NED for solving PDE problems by using the forward Euler method. }
 \label{alg1}
\end{algorithm}

\subsubsection{Stochastic  Gradient Descent Method as a Comparison}

Note that the steady state solution of \eqref{intro:problemPDE} is the solution of the following Laplace's equation:
	\begin{equation}
	\label{problem2}
	\left\{
		     \begin{array}{lr}
		    - \Delta u = f(u), & \text{in } \Omega, \\
		     u(x) = \tilde{u}(x), & \text{on } \partial \Omega.
		     \end{array}
	\right.
	\end{equation}
The optimization problem becomes
\[
\theta=\arg\min\frac{1}{N}\sum_{i=1}^N\mathbf{R}^2(x_i;\theta)+\frac{\lambda}{M} \sum_{j=1}^M\mathbf{B}^2(x_j;\theta).
\]
The gradient flow of the SGD is
\[
\dot{\theta}=\frac{2}{N}\nabla_\theta\mathbf{R}(\X_{in};\theta)^* \mathbf{R}(\X_{in};\theta) +\frac{2\lambda}{M}\nabla_\theta\mathbf{B}(\X_{bd};\theta)^*  \mathbf{B}(\X_{bd};\theta).
\]
Therefore, the SGD updates in the range space of $\begin{pmatrix}
\nabla_\theta\mathbf{R}(\X_{in};\theta), \nabla_\theta\mathbf{B}(\X_{bd};\theta)
\end{pmatrix}$ while Algorithm \ref{alg1} updates $\nabla_\theta U(\X_{in};\theta)$.

\section{Numerical Results}
\label{sec:results}
 In this section, several numerical examples are provided to show the numerical performance of NED. We shall compare the numerical performance of our NED with SGD. In NED, two explicit numerical methods are employed to solve the ODE in (\ref{odeflowUnderPara}).  One is the explicit forward Euler (FE) method; i.e.,
	\begin{equation}
\theta^{k+1}=\theta^k+\gamma(\theta^k)\eta.
	\end{equation}
The other one is the 2nd order Runge-Kutta (RK2) method; i.e.,
	\begin{equation}
\phi_1=\eta \gamma(\theta^k),~\phi_2=\eta \gamma(\theta^k+\phi_1/2),~\hbox{and}~\theta^{k+1}=\theta^k+\phi_2,
	\end{equation}
where $\eta$ is the time stepsize or the learning rate. It is well-known that the FE method has a first-order accuracy while the RK2 method has a second-order accuracy. 

For simplicity,   we will construct a neural network solution that satisfies the boundary conditions automatically \cite{gu2020structure} in all numerical experiments. We thus define a relative  ${L}^{2}$ error as follows
	\begin{equation}
e_{u} =\left(\frac{\sum_{i=1}^{N}|U(x_i;\theta(t))-u_s(x_i)|^{2}}{\sum_{i=1}^{N}|u_s(x_i)|^{2}}\right)^{\frac{1}{2}},
	\end{equation}
 where $\left\{x_{i}\right\}_{m=1}^{N}$ is the set of the random sample points uniformly distributed in  $\X_{t}$, and $u_{s}(x_i)$ is the steady state solution for regression and PDE problems.

The setting for all numerical examples is summarized as follows.
\begin{itemize}
  \item \textbf{Environment.}
  The experiments are performed in Python 3.8 environment. We utilize the Pytorch library for the implementation of the NED method and CUDA  11.6 toolkit for GPU-based parallel computing. All numerical examples are implemented on a desktop.
  \item \textbf{Learning rate.} The learning rate is set to be
  \begin{equation}\label{lr}
    \tau_n = q \tau_{0}(\cos(\pi \frac{n}{K})+1),
  \end{equation}
  where $\tau_n$ is the learning rate in the $n$-th iteration, $q$ is a parameter set to be $\frac{1}{2}$, $K$ is the number of all iterations, and $\tau_{0}$ is an initial learning rate, which will be specified in the numerical experiments. 
  \item \textbf{Network setting.} For supervised learning, we construct the fully connected neural network (FNN) to approximate the solution and use the $ReLU$ activation function if no specialization. All weights and biases in the $l$-th layer are initialized via an uniform distribution $U(-\sqrt{N_{l-1}},\sqrt{N_{l-1}})$, where $N_{l-1}$ is the width of the $l-1$-th layer.  For solving PDEs, we use the Resnet with the fixed block to approximate the steady state solution and employ $ReLU^{3}$ as the activation function if no specialization. 
  \item \textbf{Numbers of samples.}  The numbers of samples for $\X_{in}$ are randomly selected in the domain $\Omega$. In every epoch, the data scale is randomly set to be as large as possible.
  \item \textbf{Performance lines.}
  The numerical performance  of NED will be shown as follows:  the blue curve and its label "NED-FE"  represent the explicit forward Euler  method used in the NED method; the green curve and its label   "NED-RK2" represent  the 2nd order Runge-Kutta method used for solving NED flow; the red curve and its label   "SGD" mean the stochastic gradient descent method.
\end{itemize}

\subsection{Numerical Results of Evolution Equations for Supervised Learning}
\subsubsection{A One-Dimensional Example}

We apply the NED method to solve the supervised learning problem \eqref{intro:problemODE} to approximate $y=sin(x)$ on $[0,D]$. 

\textbf{Case 1:  $D=2\pi$.} We train 200 epochs  to compare our NED   with SGD. The initial learning rate is $\tau_0=1.0e-03$ and the min-batch size is set to be 200. The gradient flow  \eqref{odeflowUnderPara} for NED and the gradient flow  \eqref{odeflow2} for SGD are used in the numerical comparison.   We can see from Figure \ref{Fig:sinx} (a) and (c)  that 1) the NED methods with RK2 and FE solvers achieve  better accuracy than the SGD method; 2)  the NED with RK2 solver is more accurate than the NED with FE solver.  

\textbf{Case 2:  $D=10\pi$.} 
 We employ the activation function  $\sigma=a ReLU(x)+b sin(x)$ inspired by \cite{liang2021reproducing}, where  $a$ and $b$ are trainable parameters. The initial learning rate is   $\tau_0=1.0e-3$ and the epoch size is 500.  It can be seen from Figure \ref{Fig:sinx} (b) and (d)  that    the NED methods with both FE and RK2 solvers are more accurate than the SGD method while the NED with RK2 solver is more accurate than the NED with FE solver.

	\begin{figure}[!htbp]
		\centering
\subfigure[]{\includegraphics[width=.4\linewidth]{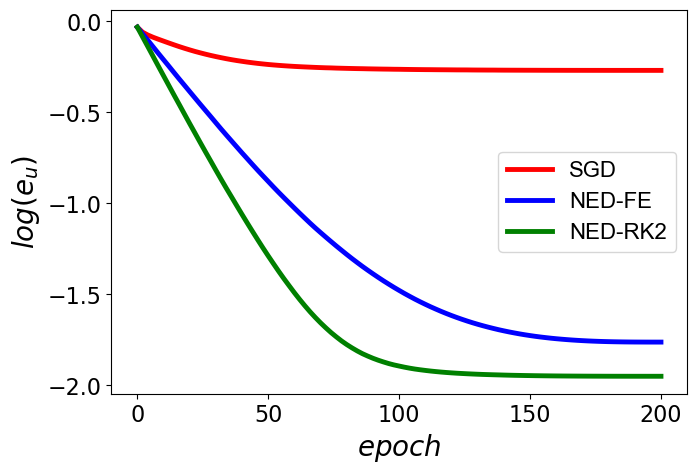}}
\subfigure[]{\includegraphics[width=.4\linewidth]{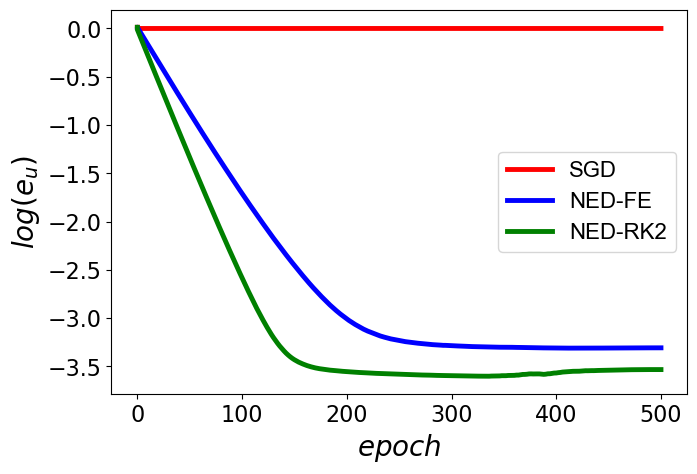}}
\subfigure[]{\includegraphics[width=.4\linewidth]{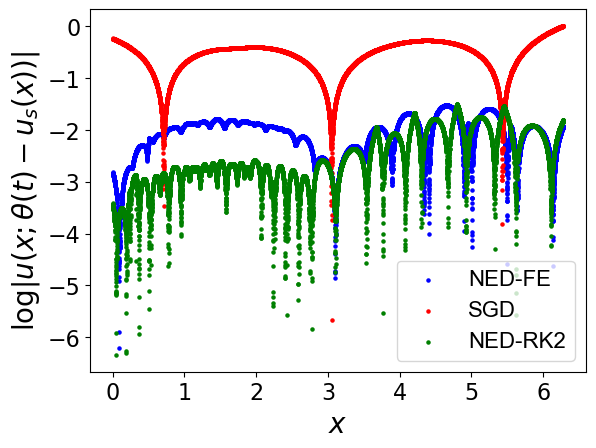}}
\subfigure[]{\includegraphics[width=.4\linewidth]{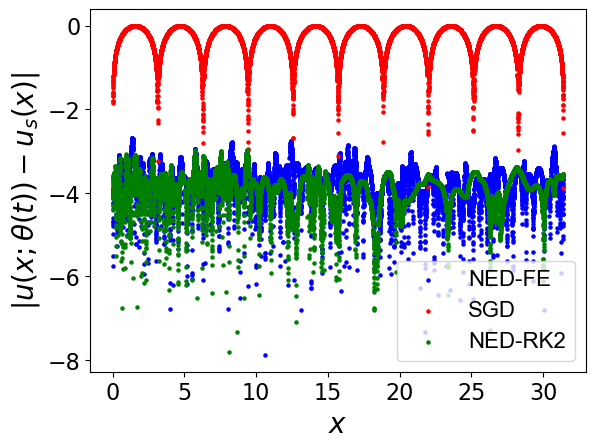}}

		\caption{Performance comparison of NED and SGD to learn $y=sin(x)$ on $[0,D]$.  (a) and (c): $D=2\pi$;  (b) and (d):  $D =10\pi$.}
		\label{Fig:sinx}
	\end{figure}

\subsubsection{A Multi-Dimensional Example}
We test the NED method     to approximate $y=\|\bm x\|_2^2$ on the domain  $[-1,1]^d$ for $d=2$, $d=10$, and $d=30$, where $\bm x=[x_1, \cdots, x_d]$.
We apply an FNN with 1 hidden layer and a width of 50. The initial learning rate $\tau_{0}$ for the NED method is set as $3.0e-03$ with 2500 epochs. The initial learning rate  for the SGD method is set as $1.0e-02$.  Each epoch of the training data has 2,000 random sample points on $[-1,1]^d$.  We can observe from  Figure \ref{Fig:x2} that for  $d=2$, $d=10$, and $d=30$, the NED methods with different ODE solvers (FE and RK2) are much more accurate than the SGD method.    We observe from Figure \ref{Fig:x2} that the green curve  for NED-RK2 seems to overlap  the blue curve for NED-FE for $d=2$ and $d=10$.   However, it should be pointed out that  the NED with RK2 solver is a bit more accurate  than the NED with  FE solver for  $d=2$, $d=10$, and $d=30$.

	\begin{figure}[!ht]
		\centering
        \includegraphics[width=0.32\linewidth]{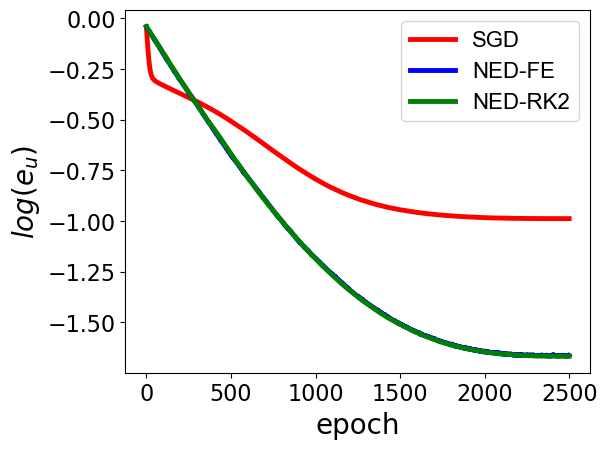}
        \includegraphics[width=0.31\linewidth]{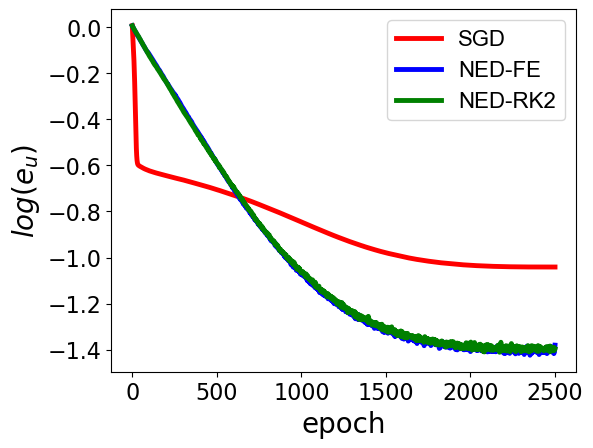}
        \includegraphics[width=0.31\linewidth]{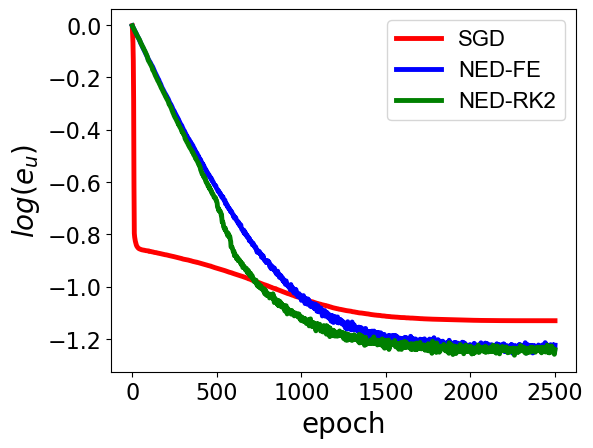}

		\caption{ Performance comparison of  NED and SGD  to approximate $y=\|\bm x\|_2^2$ on $[-1,1]^d$.  Left: $d=2$;  Middle: $d=10$; Right:   $d=30$.} 
		\label{Fig:x2}
	\end{figure}

\subsection{Numerical Results of Evolution Equations for Solving PDEs}
\subsubsection{A One-Dimensional Nonlinear Boundary Value Problem}
 We consider the following nonlinear boundary value problem that seeks $u$ such that 
\begin{equation}\label{1d}
    \begin{split}
        &\partial_{t} u-\Delta u=-2u^3, \text{in} \ [-1,0],\\
    & u(-1)=\frac{1}{2}, \qquad u(0)=\frac{1}{3}.  
    \end{split}
\end{equation} 
It is obvious that the analytical steady state solution is $u_{s}(x)=\frac{1}{x+3}$. 
Inspired by \cite{gu2020structure}, we design a special network  satisfying the boundary conditions automatically; i.e., 
$$u(x;\theta) = (x+1)(0-x)\hat{u}(x;\theta)+l_{1}(x),$$
where $l_{1}(x) = \frac{1}{2}+(\frac{1}{3}-\frac{1}{2})(x+1)$
and $\hat{u}(x;\theta)$ is the ResNet with two blocks of width 20. 
In each training iteration, we randomly select 10000 sample points in the interior domain $(-1,0)$. The initial learning rate $\tau_{0}$ for the NED method is $3.0e-04$ with 3000 epochs. The initial learning rate  for the SGD method is set as $5.0e-03$.  
We can see from Figure \ref{Fig:1DBVP} that our NED  methods with  ODE solvers (FE and RK2) trained by Algorithm \ref{alg1}  are more accurate than the SGD method when epoch $> 1500$. 


	\begin{figure}[!ht]
		\centering
\includegraphics[width=.4\linewidth]{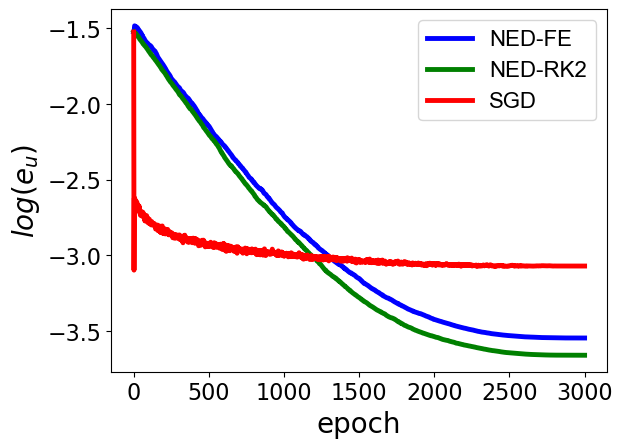}
\includegraphics[width=.39\linewidth]{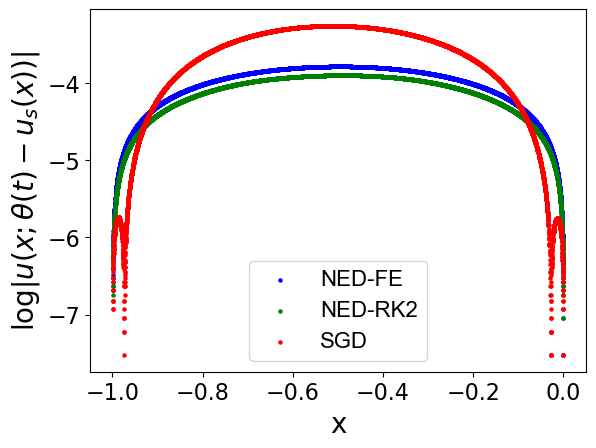}

\caption{Performance Comparison of NED and SGD.  }
		\label{Fig:1DBVP}
	\end{figure}
\subsubsection{A multi-dimension Linear Boundary Value Problem }
We consider the heat equation that seeks $u$ such that
 \begin{equation}
	\label{pde_nd}
	\left\{
		     \begin{array}{lr}
		    u_t - \Delta u = -d, & \text{in } \Omega=[0,1]^d, \\
		     u(\bm{x},t) = \frac{1}{2}\|\bm{x}\|_2^2, & \text{on } \partial \Omega.\\
		     \end{array}
	\right.
	\end{equation}
Obviously, the analytical steady state solution is  $u_{s}(\bm{x}) = \frac{1}{2}\|\bm{x}\|_2^2$. We apply a special network structure  automatically satisfying the boundary conditions introduced in \cite{gu2020structure}; i.e.,
\begin{equation}\label{construct_multi}
u(\bm{x};\theta) = \prod_{i=1}^{d}(1-x_{i})x_{i}\hat{u}(\bm{x};\theta)+\ell_{1}(\bm{x}),
\end{equation}
where $\hat{u}(\bm{x};\theta)$ is  a neural network of three hidden layers and width 20, and
\[
\ell_1(\bm{x} ) = \frac{1}{2}\|\bm{x}\|_2^2+\sin(2\pi\sum_{i=1}^{d} x_i){\displaystyle\prod_{i=1}^d x_i(1-x_i)}.
\] 

We test the case of  $d= 5$. Algorithm \ref{alg1} is coupled with the forward Euler and the 2nd Runge-Kutta ODE solvers for the semi-discretization system. We randomly choose 10000 points in the interior  of the domain. 
   The initial learning rates  for our NED method and the SGD method are 
   $\tau_{0} = 8.0e-3$ and $\tau_{0} =1.0e-1$  respectively. 
  It can be seen from  Figure \ref{Fig:MultiBVP}  that 1) the NED methods with FE and RK2 solvers are more accurate than the SGD method in updating gradient flow to gain steady state solution $u_{s}(\bm{x};\infty)$ in the evolution equation; 2) our NED method with RK2 solver is a bit more accurate than our NED method with FE solver.  

	\begin{figure}[!ht]
		\centering

\includegraphics[width=.4\linewidth]{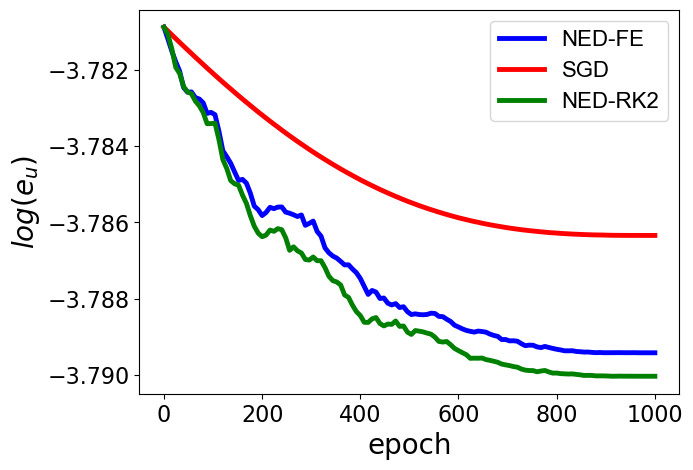}

		\caption{Performance Comparison between NED and SGD:   
    $d=5$. }
		\label{Fig:MultiBVP}
	\end{figure}

\subsubsection{A multi-dimension Nonlinear Boundary Value Problem}
We consider the following nonlinear   boundary value problem that seeks $u$ such that
\begin{equation}
	\label{pde_2d nonlinear}
	\left\{
		     \begin{array}{lr}
		     u_{t}- \Delta u+u^{3}-u= 0, & \text{in } \Omega=[0,1]^{d}, \\
             u(\bm{x},t)  = 1,  & \text{on}\  \partial \Omega.\\
		     \end{array}
	\right.
	\end{equation}
    The analytical steady-state solution is
$u_{s}(\bm{x}) = 1$.We apply a special network structure  automatically satisfying the boundary conditions introduced in \cite{gu2020structure}; i.e.,
\begin{equation}
u(\bm{x};\theta) = \prod_{i=1}^{d}(1-x_{i})x_{i}\hat{u}(\bm{x};\theta)+\ell_{1}(\bm{x}),
\end{equation}
where $\hat{u}(\bm{x};\theta)$ is  a neural network of three hidden layers and width 20, and
\[
\ell_1(\bm{x} ) = 1+\sin(2\pi\sum_{i=1}^{d} x_i){\displaystyle\prod_{i=1}^d x_i(1-x_i)}.
\] 

We test the case  for $d=5$. 
We randomly choose 20000 points in the interior domain and employ Algorithm \ref{alg1}   coupled with the forward Euler and 2nd Runge-Kutta ODE solvers to solve the semi-discretization  system. 
The initial learning rates for our NED method and the SGD method  are 
$\tau_{0}=5.0e-7$
and $\tau_{0}=5.0e-1$,
respectively. 
Figure \ref{Fig:2Dnonlinear} shows 1) the numerical performance of our NED methods with FE and RK2 solvers is  better than the SGD method;  2) our NED method  with  RK2 solver is more accurate than the NED method with FE solver.
\begin{figure}[!htbp]
		\centering
\includegraphics[width=0.4\linewidth]{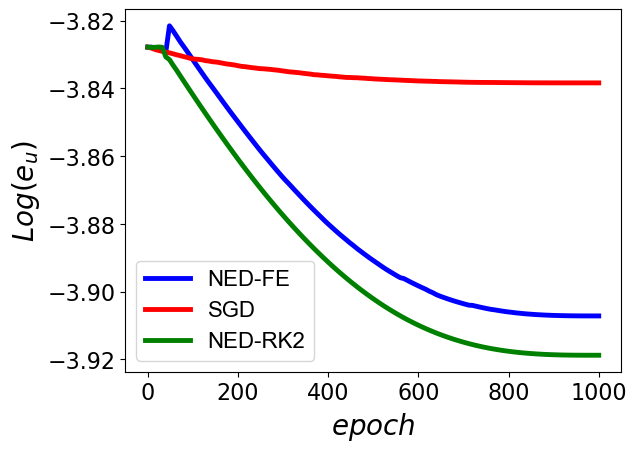}	
	\caption{Performance Comparison between NED and SGD:    
    $d=5$.}
		\label{Fig:2Dnonlinear}
	\end{figure}



\section{Conclusion}\label{sec:con}
In this paper, we propose a novel network-based optimization method called the Neural Energy Descent method (NED) to solve deep learning problems via  identifying steady-state solutions of  evolution equations. NED is powerful to be applied to a wide range of machine learning problems such as supervised learning and solving PDEs. We have developed a deep network approximation theory to justify the NED scheme. Numerical results have been observed to demonstrate the significant advantage of our NED method over the SGD method. In future work, we shall study the optimization convergence from the viewpoint of variational functional minimization and the generalization error of NED since random samples are used in the solver. This work will provide a complete theory of this new optimization algorithm for deep learning.

 \section*{Acknowledgements}
W. H. was supported by  the National Science Foundation award DMS-2052685 and the National Institutes of Health award  1R35GM146894. C. W. was partially supported by National Science Foundation under awards DMS-2136380 and DMS-2206332. H. Y. was partially supported by the US National Science Foundation under awards DMS-2244988, DMS-2206333, and the Office of Naval Research Award N00014-23-1-2007.  

\bibliographystyle{abbrv} 
\bibliography{refs} 

\newpage
\appendix

\section{Notations and Definitions}

We first introduce notations and definitions throughout this paper.

\subsection{Deep Neural Networks}

Let us summarize all basic notations used in deep neural networks as follows.
\begin{itemize}

     \item Matrices are denoted by bold uppercase letters. For instance,  $\bm{A}\in\mathbb{R}^{m\times n}$ is a real matrix of size $m\times n$, and $\bm{A}^T$ denotes the transpose of $\bm{A}$. 
     
     \item Vectors are denoted as bold lowercase letters. For example, $\bm{v}\in \R^n$ is a column vector of size $n$. Correspondingly, $\bm{v}(i)$ is the $i$-th element of $\bm{v}$. $\bm{v}=[v_1,\cdots,v_n]^T=\left[\hspace{-4pt}\begin{array}{c}
    \vspace{-3pt} v_1 \\ \vspace{-5pt} \vdots \\ v_n
     \end{array}\hspace{-4pt}\right]$ is a vector   with $\bm{v}(i)=v_i$.
     
      \item 
    A $d$-dimensional multi-index is a $d$-tuple
    $\xal=[\alpha_1,\alpha_2,\cdots,\alpha_d]^T\in \N^d.$
    Several related notations are listed below.
    \begin{itemize}
        \item  $|\xal|=|\alpha_1|+|\alpha_2|+\cdots+|\alpha_d|$;
        \item $\xx^\xal=x_1^{\alpha_1}  x_2^{\alpha_2} \cdots x_d^{\alpha_d}$, where $\xx=[x_1,x_2,\cdots,x_d]^T$;
        \item $\xal!=\alpha_1!\alpha_2!\cdots \alpha_d!$.
    \end{itemize}

    \item Let $B_{r,|\cdot|}(\xx)\subseteq \R^d$ be the closed ball with a center $\xx\subseteq \R^d$ and a radius $r$ measured by the Euclidean distance. Similarly, $B_{r,\|\cdot\|_{\ell^\infty}}(\xx)\subseteq \R^d$ is a ball measured by the discrete $\ell^\infty$-norm of a vector.
     
     
     
     \item Assume $\bm{n}\in \N^n$, then $f(\bm{n})=\mathcal{O}(g(\bm{n}))$ means that there exists positive $C$ independent of $\bm{n}$, $f$, and $g$ such that $ f(\bm{n})\le Cg(\bm{n})$ when all entries of $\bm{n}$ go to $+\infty$.
     
     \item We will use $\sigma$ to denote activation functions. Let $\sigma_1:\R\to \R$ denote the rectified linear unit (ReLU), i.e. $\sigma_1(x)=\max\{0,x\}$. With the abuse of notations, we define $\sigma_1:\R^d\to \R^d$ as $\sigma_1(\xx)=\left[\begin{array}{c}
          \max\{0,x_1\}  \\
          \vdots \\
          \max\{0,x_d\}
     \end{array}\right]$ for any $\xx=[x_1,\cdots,x_d]^T\in \R^d$.
     Furthermore, let $\sigma_2:\R\to \R$ be $\sigma_1^2$ and similarly we define the action of $\sigma_2$ on a vector $\xx$.
     
     \item We will use $\tn{NN}$ as a neural network for short and \textbf{$\sigma_r$-$\tn{NN}$ to specify an $\tn{NN}$ with activation functions $\sigma_t$ with $t\leq r$}. We will also use Python-type notations to specify a class of $\tn{NN}$s, e.g., $\sigma_1$-$\tn{NN}(\tn{c}_1;\ \tn{c}_2;\ \cdots;\ \tn{c}_m)$ is a set of ReLU FNNs satisfying $m$ conditions given by $\{\tn{c}_i\}_{1\leq i\leq m}$, each of which may specify the number of inputs ($\NNinput$), the total number of nodes in all hidden layers ($\#$node), the number of hidden layers ($\#$layer), the number of total parameters ($\#$parameter), and the width in each hidden layer (widthvec), the maximum width of all hidden layers (maxwidth), etc. For example, if $\phi\in \sigma_1$-$\tn{NN}(\NNinput=2; \NNwidth=[100,100])$,  then $\phi$  satisfies
     \begin{itemize}
         \item $\phi$ maps from $\R^2$ to $\R$.
         \item $\phi$ has two hidden layers and the number of nodes in each hidden layer is $100$.
     \end{itemize}
     \item $[n]^L$ is short for $[n,n,\cdots,n]\in \N^L$. 
     For example, \[\tn{NN}(\NNinput=d;\NNwidth=[100,100])=\tn{NN}(\NNinput=d;\NNwidth=[100]^2).\]
     
     \item For $\phi\in \tn{NN}(\NNinput=d;\NNwidth=[N_1,N_2,\cdots,N_L])$, if we define $N_0=d$ and $N_{L+1}=1$, then the architecture of $\phi$ can be briefly described as follows:
    \begin{equation*}
    \begin{aligned}
    \bm{x}=\tilde{\bm{h}}_0 \myto^{\bm{W}_1,\ \bm{b}_1} \bm{h}_1\mathop{\longrightarrow}^{\sigma} \tilde{\bm{h}}_1 \cdots \myto^{\bm{W}_L,\ \bm{b}_L} \bm{h}_L\mathop{\longrightarrow}^{\sigma} \tilde{\bm{h}}_L \mathop{\myto}^{\bm{W}_{L+1},\ \bm{b}_{L+1}} \phi(\bm{x})=\bm{h}_{L+1},
    \end{aligned}
    \end{equation*}
    where $\bm{W}_i\in \R^{N_{i}\times N_{i-1}}$ and $\bm{b}_i\in \R^{N_i}$ are the weight matrix and the bias vector in the $i$-th linear transform in $\phi$, respectively, i.e., \[\bm{h}_i :=\bm{W}_i \tilde{\bm{h}}_{i-1} + \bm{b}_i,\quad \tn{for $i=1$, $\dots$, $L+1$,}\]  and
    \[
       \tilde{\bm{h}}_i=\sigma(\bm{h}_i),\quad \tn{for $i=1$, $\dots$, $L$.}
    \]
    $L$ in this paper is also called the number of hidden layers in the literature.

     \item The expression, an FNN with width $N$ and depth $L$, means
     \begin{itemize}
         \item The maximum width of this FNN for all hidden layers less than or equal to $N$.
         \item The number of hidden layers of this FNN less than or equal to $L$.
     \end{itemize}
\end{itemize}

\begin{lemma}\label{lem:NNex1}
A list of examples and basic lemmas of $\sigma_1$-$\tn{NN}$s.
\begin{enumerate}[label=(\roman*)]
\item Any one-dimensional continuous piecewise linear function with $N$ breakpoints can be exactly realized by a one-hidden layer $\sigma_1$-$\tn{NN}$ with $N$ neurons in the hidden layer.
\item Any identity map in $\R^d$ can be carried out precisely by a $\sigma_1$-$\tn{NN}$ with one hidden layer and $2d$ neurons.
\item (Lemma 5.1 of \cite{Shen3}) For any $N,L\in \N^+$, there exists a $\sigma_1$-$\tn{NN}$ $\phi$ with width $3N$ and depth $L$ such that
    \begin{equation*}
    |\phi(x)-x^2|\le N^{-L}, \quad  \forall x\in [0,1]. 
    \end{equation*}
\item (Lemma 4.2 of \cite{Shen3}) For any $N,L\in \N^+$ and $a,b\in \R$ with $a<b$, there exists a $\sigma_1$-$\tn{NN}$ $\phi$ with width $9N+1$ and depth $L$ such that
    \begin{equation*}
    |\phi(x,y)-xy|\le 6(b-a)^2N^{-L}, \quad  \forall x,y\in [a,b].
    \end{equation*}
    \item (Proposition 4.1 of \cite{Shen3}) Assume $P(\xx)=\xx^\xal=x_1^{\alpha_1}x_2^{\alpha_2}\cdots x_d^{\alpha_d}$ for $\xal\in \N^d$ with $|\xal|=k\ge 2$. For any $N,L\in \N^+$, there exists a $\sigma_1$-$\tn{NN}$ $\phi$ with width $9(N+1)+k-2$ and depth $7k(k-1)L$ such that
    \begin{equation*}
    |\phi(\xx)-P(\xx)|\le 9(k-1)(N+1)^{-7kL},\quad  \forall \xx\in [0,1]^d.
    \end{equation*}
    \item Assume $P(\xx)= \sum_{j=1}^J c_j \xx^{\xal_j}$ for $\xal_j\in \N^d$ with $k=\max_{j}|\xal_j|\ge 2$. For any $N,L\in \N^+$, there exists a $\sigma_1$-$\tn{NN}$ $\phi$ with width $(9(N+1)+k-2)J$ and depth $7k(k-1)L$ such that
    \begin{equation*}
    |\phi(\xx)-P(\xx)|\le 9J(k-1)(N+1)^{-14L},\quad \forall \xx\in [0,1]^d.
    \end{equation*}
    \item Assume $P(\xx)=\min\{x_1,x_2,\dots,x_n\}$ for $\xx\in\R^n$, there exists a $\sigma_1$-$\tn{NN}$ $\phi(\xx)$ of width $2n$ and depth $n-1$ such that $\phi(\xx)=P(\xx)$ for any $\xx\in\R^n$.
\end{enumerate}
\end{lemma}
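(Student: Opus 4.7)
The plan is to handle the seven items separately. Items (iii), (iv), (v) are direct quotations from \cite{Shen3} (as indicated in their statements) and can be dispatched by citation; the remaining items (i), (ii), (vi), (vii) admit short explicit constructions built on two recurring tricks: the positive/negative-part decomposition $x=\sigma_1(x)-\sigma_1(-x)$, and parallel composition of subnetworks with a final linear readout.

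For (i), I would express a continuous piecewise linear function with breakpoints $x_1<x_2<\cdots<x_N$ as $f(x)=a+bx+\sum_{i=1}^N c_i\,\sigma_1(x-x_i)$, where $a$ and $b$ capture the leftmost affine piece and each $c_i$ equals the slope jump at $x_i$; matching coefficients proceeds by induction from left to right, yielding a one-hidden-layer $\sigma_1$-$\tn{NN}$ with exactly $N$ neurons. For (ii), I would apply $x_j=\sigma_1(x_j)-\sigma_1(-x_j)$ coordinatewise, so the hidden layer holds $2d$ neurons (a $+$ and $-$ copy per input coordinate), and the output weight matrix $[I_d,-I_d]$ with zero bias reproduces the identity.

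For (vi), I would place $J$ parallel copies of the monomial network from (v) side by side, the $j$-th tuned to approximate $\xx^{\xal_j}$. With $k=\max_j|\xal_j|\ge 2$, I would use a uniform per-copy depth $7k(k-1)L$ and per-copy width $9(N+1)+k-2$, padding lower-degree subnetworks with identity neurons/layers when needed; this yields a combined width $(9(N+1)+k-2)J$ at the claimed depth, and a final affine layer sums the outputs with coefficients $c_j$. The triangle inequality then gives total error at most $\sum_j 9(k_j-1)(N+1)^{-7k_jL}\le 9J(k-1)(N+1)^{-14L}$, where the exponent is weakened from $7k_jL$ to $14L$ by using the worst-case $k_j=2$.

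For (vii), I would iterate the identity $\min\{a,b\}=\frac{a+b}{2}-\frac{1}{2}\big(\sigma_1(a-b)+\sigma_1(b-a)\big)$, building the running minimum $m_i=\min\{x_1,\dots,x_{i+1}\}$ at layer $i$ from $m_{i-1}$ and $x_{i+1}$ using $2$ ReLUs, while carrying the still-unused inputs $x_{i+2},\dots,x_n$ forward via the identity construction of (ii) at a cost of $2(n-i-1)$ further ReLUs. The total depth is $n-1$ and the maximum hidden-layer width, attained at $i=1$, is $2+2(n-2)=2(n-1)\le 2n$. The step I expect to require the most care is this last one, where the affine map feeding layer $i+1$ must simultaneously form the pre-activations $\pm(m_i-x_{i+2})$ (for the next min step) and $\pm x_j$ for $j\ge i+3$ (for further forwarding) out of the ReLU outputs of layer $i$; verifying that this combined affine map is realizable and that the width stays within $2n$ at every layer is the main bookkeeping obstacle.
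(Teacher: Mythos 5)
Your handling of items (i)--(vi) is essentially the paper's route: (iii)--(v) by citation, (ii) by the $\sigma_1(x)-\sigma_1(-x)$ decomposition, and (vi) by stacking $J$ parallel copies of the monomial network from (v) with identity padding (the paper makes the same construction and likewise handles terms with $|\xal_j|\le 1$ via the identity map of (ii)). One minor caveat on (i): your formula $f(x)=a+bx+\sum_{i=1}^N c_i\,\sigma_1(x-x_i)$ contains the raw affine term $bx$, which is not an affine combination of the $N$ hidden ReLU outputs; under Definition 2.1 there is no skip connection, so as written this is not a one-hidden-layer $\sigma_1$-$\tn{NN}$ with $N$ neurons. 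Either absorb $bx$ into an additional neuron or restrict to functions whose leftmost piece is constant (which covers everything the paper actually uses, e.g.\ the hat functions in the partition of unity).

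The substantive problem is the width count in (vii), precisely at the step you flag as needing care --- and it does fail as described. At layer $i$ you allot only the two ReLUs $\sigma_1(\pm(m_{i-1}-x_{i+1}))$ to the min gadget. But $m_i=\frac{(m_{i-1}+x_{i+1})-|m_{i-1}-x_{i+1}|}{2}$ must be an affine function of the layer-$i$ activations in order to form the pre-activations of layer $i+1$, and the available activations --- $\sigma_1(\pm(m_{i-1}-x_{i+1}))$ together with the forwarded $\sigma_1(\pm x_j)$ for $j\ge i+2$ --- determine only the difference $m_{i-1}-x_{i+1}$ and quantities independent of $m_{i-1},x_{i+1}$; the sum $m_{i-1}+x_{i+1}$ is irrecoverable, so the required affine map does not exist. (The same obstruction defeats any two-neuron gadget: an affine function of $\sigma_1(\pm(a-b))$ depends on $(a,b)$ only through $a-b$, whereas $\min\{a,b\}$ does not.) The paper's proof avoids this by spending \emph{four} neurons per min operator --- two giving $|m_{i-1}-x_{i+1}|$ and two realizing an identity map carrying $m_{i-1}+x_{i+1}$ (equivalently $m_{i-1}$ and $x_{i+1}$) across the layer --- so the width is $4+2(n-2)=2n$ rather than your $2+2(n-2)$. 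The lemma's stated bound of $2n$ is therefore still met, but only with the four-neuron (or a three-neuron, via $\min\{a,b\}=b-\sigma_1(b-a)$ with $b$ forwarded) gadget; your $2(n-1)$ count is not achievable by the construction you describe.
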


\begin{proof}
(i) and (ii) are simple. (iii) to (v) are quoted from \cite{Shen3}. 

Part (vi): In the case of $\min_{j}|\xal_j|\ge 2$, the $\sigma_1$-$\tn{NN}$ in (vi) can be constructed by stacking $J$ $\sigma_1$-$\tn{NN}$s approximating $\xx^{\xal_j}$ by (vi). In the case when $|\xal_j| \leq 1$ for some $j$'s, these terms can be easily taken care of using the identity map in (ii).

Part (vii): The proof of (vii) is based on the observation that $\min\{x,y\}=\frac{x+y-|x-y|}{2}$, which can be represented exactly with a $\sigma_1$-$\tn{NN}$ of width $4$ and depth $1$. We can repeatedly apply this observation to build the desired network that evaluates a $\min$ operator of two numbers per hidden layer. Each $\min$ operator takes $4$ neurons per layer and extra $2(n-2)$ neurons per layer are required to generate an identify map of dimension at least $n-2$ to pass unused numbers in $\xx$ to the next layer. Hence, the  total width requirement is $4+2(n-2)=2n$ and the depth requirement is $n-1$.
\end{proof}

\begin{lemma}\label{lem:NNex2}
A list of examples and basic lemmas of $\sigma_2$-$\tn{NN}$s.
\begin{enumerate}[label=(\roman*)]
\item $\sigma_1$-$\tn{NN}$s are $\sigma_2$-$\tn{NN}$s.
\item Any identity map in $\R^d$ can be carried out precisely by a $\sigma_2$-$\tn{NN}$ with one hidden layer and $2d$ neurons.
\item $f(x)=x^2$ can be implemented via a one-hidden-layer $\sigma_2$-$\tn{NN}$ with two neurons.
\item $f(x,y)=xy=\frac{(x+y)^2-(x-y)^2}{4}$ can be implemented via a one-hidden-layer $\sigma_2$-$\tn{NN}$ with four neurons.
\item Assume $P(\xx)=\xx^\xal=x_1^{\alpha_1}x_2^{\alpha_2}\cdots x_d^{\alpha_d}$ for $\xal\in \N^d$. For any $N,L\in \N^+$ such that $NL+2^{\lfloor \log_2 N \rfloor}\geq |\xal|$, there exists a $\sigma_2$-$\tn{NN}$ $\phi$ with width $4N+2d$ and depth $L+\lceil \log_2 N\rceil$ such that
    \begin{equation*}
    \phi(\xx)=P(\xx)\quad \tn{for any $\xx\in \R^d$.}
    \end{equation*} 
    \item Assume $P(\xx)= \sum_{j=1}^J c_j \xx^{\xal_j}$ for $\xal_j\in \N^d$. For any $N,L,a,b\in \N^+$ such that $ab\geq J$ and $(L-2b-b\log_2 N)N\geq  b \max_j |\xal_j|$, there exists a $\sigma_2$-$\tn{NN}$ $\phi$ with width $4Na+2d+2$ and depth $L$ such that
    \begin{equation*}
 \phi(\xx)=P(\xx)\quad \tn{for any $\xx\in \R^d$.}
    \end{equation*}
\end{enumerate}
\end{lemma}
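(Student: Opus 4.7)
The six items split naturally into algebraic identities (i)--(iv) and compositional constructions (v)--(vi). For (i), the activation $\sigma_{a,b}$ reduces to ReLU by taking $a=1$ and $b=0$, so every $\sigma_1$-$\tn{NN}$ is already a $\sigma_2$-$\tn{NN}$. Given (i), (ii) follows from the scalar identity $x = \max\{x,0\} - \max\{-x,0\}$, which in vector form needs $2d$ neurons in a single hidden layer. For (iii), observe that $\sigma_{0,1}(x) = x\max\{x,0\}$, hence $\sigma_{0,1}(x) + \sigma_{0,1}(-x) = x^2$ for every $x \in \R$, giving two neurons in one hidden layer. For (iv), I would invoke the polarization identity $xy = \tfrac{1}{4}\bigl[(x+y)^2 - (x-y)^2\bigr]$ and instantiate each square via (iii), totalling four neurons in one hidden layer.

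For (v), the monomial $\xx^\xal$ is a product of $|\xal|$ factors (counting multiplicity), and I would realize it exactly as a composition of $|\xal|-1$ binary multiplications drawn from (iv), arranged in two phases. Phase A is a \emph{chain phase} of $L$ layers and width $4N$: at each layer, $N$ pairwise multiplications run in parallel, each using four neurons, while an additional $2d$ neurons carry the original inputs $x_1,\dots,x_d$ forward via the identity block from (ii), so that factors not yet consumed remain accessible. Phase B is a \emph{balanced binary tree} of depth $\lceil \log_2 N \rceil$ that pairs up the $N$ partial products produced by phase A and halves their count each layer, collapsing to the single output; since the heaviest layer of the tree uses at most $\lceil N/2 \rceil \leq N$ multiplications, the width $4N+2d$ is respected throughout. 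A reverse count starting from the single output layer shows that phase B can absorb $2^{\lfloor \log_2 N \rfloor}$ factors and phase A another $NL$, delivering the stated bound $|\xal| \leq NL + 2^{\lfloor \log_2 N \rfloor}$ with overall depth $L+\lceil \log_2 N \rceil$.

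Item (vi) is obtained by tiling (v) $J$ times and summing. Organize the architecture as an $a\times b$ grid of monomial blocks: $a$ blocks of width $4N$ run side by side and $b$ such rows are stacked in depth, so $ab \geq J$ guarantees that each of the $J$ monomials is assigned its own block realizing it via (v). If $L'$ denotes the depth allotted to a single block, then $b(L' + \lceil \log_2 N \rceil + O(1)) \leq L$ and, by (v), $NL' + 2^{\lfloor \log_2 N \rfloor} \geq \max_j|\xal_j|$; combining these rearranges to the stated $(L - 2b - b\log_2 N)N \geq b\max_j|\xal_j|$. The extra $2d+2$ neurons in width propagate the original inputs $\xx$ (the $2d$ from (ii)) and maintain a scalar running-sum accumulator that collects $\sum_j c_j\xx^{\xal_j}$ via successive identity steps; the coefficients $c_j$ are folded into the output affine map and need no additional nonlinearity. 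The main obstacle throughout is width/depth bookkeeping: one must schedule multiplicative slots, identity carries, and the tree collapse so that the exact widths $4N+2d$ and $4Na+2d+2$ and the exact depths $L+\lceil \log_2 N\rceil$ and $L$ are respected at every layer, with no off-by-one slippage.
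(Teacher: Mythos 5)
Your proposal is correct and follows essentially the same route as the paper: (i)--(iv) via the same direct identities (ReLU as a special case of $\sigma_{a,b}$, $x=\max\{x,0\}-\max\{-x,0\}$, squaring from the activation, polarization for products), (v) via $N$ parallel multiplication chains per layer with a $2d$-wide identity carrier followed by a dyadic tree of depth $\lceil\log_2 N\rceil$, and (vi) via an $a\times b$ grid of monomial blocks sharing a single identity map of $\R^d$ plus a width-$2$ running-sum channel, with the same depth bookkeeping yielding $(L-2b-b\log_2 N)N\geq b\max_j|\xal_j|$. No substantive differences from the paper's construction.
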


\begin{proof}
(i) to (iv) are trivial. We will only prove (v) and (vi). 

Part (v): In the case of $|\xal|=k\leq 1$, the proof is simple and left for the reader. When $|\xal|=k\ge 2$, the main idea of the proof of (v) can be summarized in Figure \ref{fig:NN1}. We apply $\sigma_1$-$\tn{NN}$s to implement a $d$-dimensional identity map as in Lemma \ref{lem:NNex1} (iii). These identity maps maintain necessary entries of $\xx$ to be multiplied together. We apply $\sigma_2$-$\tn{NN}$s to implement the multiplication function in Lemma \ref{lem:NNex2} (iii) and carry out the multiplication $N$ times per layer. After $L$ layers, there are $k-NL\leq N$ multiplication to be implemented. Finally, these at most $N$ multiplications can be carried out with a small $\sigma_2$-$\tn{NN}$s in a dyadic tree structure.

Part (vi): The main idea of the proof is to apply Part (v) $J$ times to construct $J$ $\sigma_2$-$\tn{NN}$s, $\{\phi_j(\xx)\}_{j=1}^J$, to represent $\xx^{\xal_j}$ and arrange these $\sigma_2$-$\tn{NN}$s as sub-NN blocks to form a larger $\sigma_2$-$\tn{NN}$ $\tilde{\phi}(\xx)$ with $ab$ blocks as shown in Figure \ref{fig:NN2}, where each red rectangle represents one $\sigma_2$-$\tn{NN}$ $\phi_j(\xx)$ and each blue rectangle represents one $\sigma_1$-$\tn{NN}$ of width $2$ as an identity map of $\R$. There are $ab$ red blocks with $a$ rows and $b$ columns. When $ab\geq J$, these sub-NN blocks can carry out all monomials $\xx^{\xal_j}$. In each column, the results of the multiplications of $\xx^{\xal_j}$ are added up to the input of the narrow $\sigma_1$-$\tn{NN}$, which can carry the sum over to the next column. After the calculation of $b$ columns, $J$ additions of the monomials $\xx^{\xal_j}$ have been implemented, resulting in the output $P(\xx)$. 

By Part (v), for any $N\in \N^+$, there exists a $\sigma_2$-$\tn{NN}$ $\phi_j(\xx)$ of width $2d+4N$ and depth $L_j = \lceil \frac{|\xal_j|}{N}\rceil +\lceil \log_2 N \rceil$ to implement $\xx^{\xal_j}$. Note that $b\max_j L_j\leq b\left(  \frac{\max_j |\xal_j|}{N} +2 + \log_2 N \right)$. Hence, there exists a $\sigma_2$-$\tn{NN}$ $\tilde{\phi}(\xx)$ of width $2da+4Na+2$ and depth $b\left(  \frac{\max_j |\xal_j|}{N} + 2 + \log_2 N \right)$ to implement $P(\xx)$ as in Figure \ref{fig:NN2}. Note that the total width of each column of blocks is $2ad+4Na+2$ but in fact this width can be reduced to $2d+4Na+2$, since the red blocks in each column can share the same identity map of $\R^d$ (the blue part of Figure \ref{fig:NN1}).

Note that $b\left(  \frac{\max_j |\xal_j|}{N} + 2 + \log_2 N \right)\leq L$ is equivalent to $(L-2b-b\log_2 N)N\geq b \max_j |\xal_j|$. Hence, for any $N,L,a,b\in \N^+$ such that $ab\geq J$ and $(L-2b-b\log_2 N)N\geq b \max_j |\xal_j|$, there exists a $\sigma_2$-$\tn{NN}$ $\phi(\xx)$ with width $4Na+2d+2$ and depth $L$ such that $\tilde{\phi}(\xx)$ is a sub-NN of $\phi(\xx)$ in the sense of $\phi(\xx)=\tn{Id}\circ\tilde{\phi}(\xx)$ with $\tn{Id}$ as an identify map of $\R$, which means that $\phi(\xx)=\tilde{\phi}(\xx)=P(\xx)$. The proof of Part (vi) is completed.
\end{proof}

	\begin{figure}[!ht]
		\centering
		\includegraphics[width=0.8\linewidth]{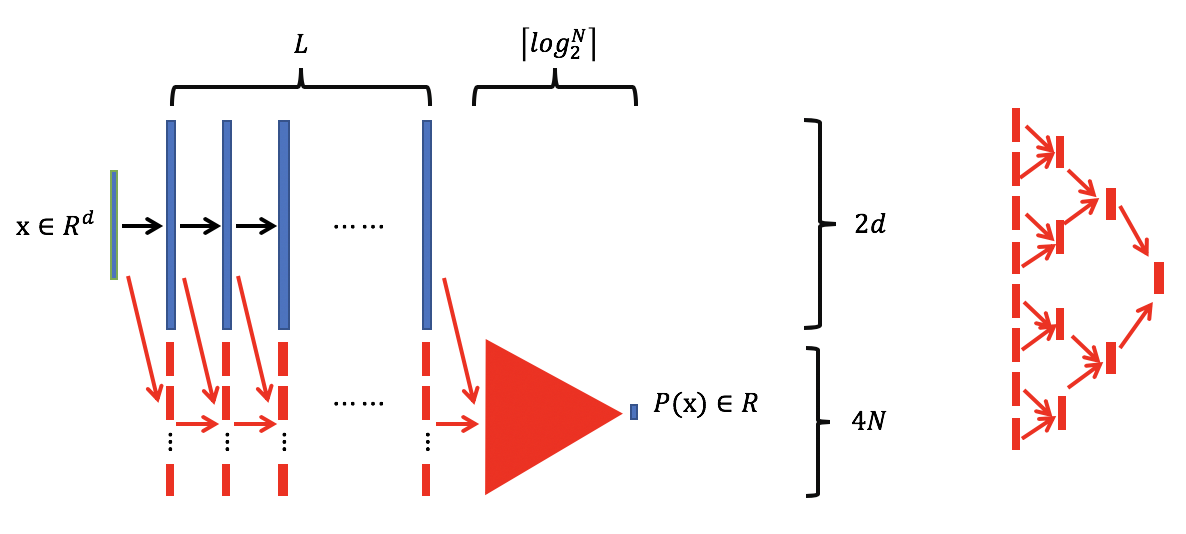}
		\caption{Left: An illustration of the proof of Lemma \ref{lem:NNex2} (v). Green vectors represent the input and output of the $\sigma_2$-$\tn{NN}$ carrying out $P(\xx)$. Blue vectors represent the $\sigma_1$-$\tn{NN}$ that implements a $d$-dimensional identity map in Lemma \ref{lem:NNex1} (iii), which was repeatedly appled for $L$ times. Black arrows represent the data flow for carrying out the identity maps. Red vectors represent the $\sigma_2$-$\tn{NN}$s implementing the multiplication function in Lemma \ref{lem:NNex2} (iii) and there $NL$ such red vectors. Red arrows represent the data flow for carrying out the multiplications. Finally, a red triangle represent a $\sigma_2$-$\tn{NN}$ of width at most $4N$ and depth at most $\lceil \log_2^N \rceil$ carrying out the rest of the multiplications. Right: An example of the red triangle is given on the right when it consists of $15$ red vectors carrying out $15$ multiplications.}
		\label{fig:NN1}
	\end{figure}

	\begin{figure}[!ht]
		\centering
		\includegraphics[width=0.8\linewidth]{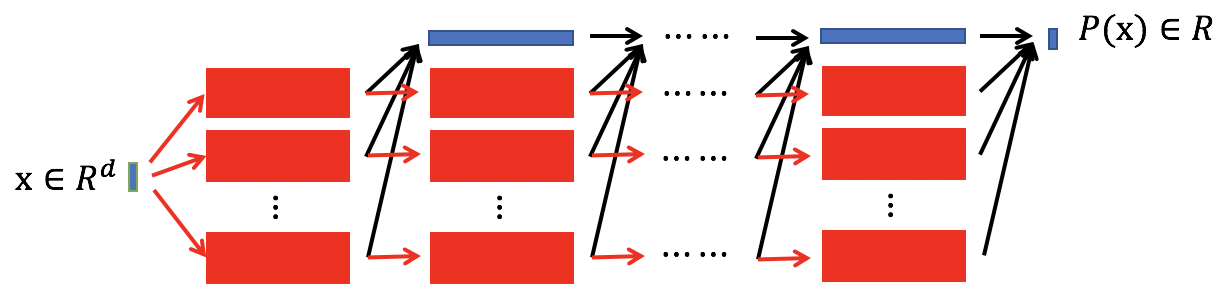}
		\caption{An illustration of the proof of Lemma \ref{lem:NNex2} (vi). Green vectors represent the input and output of the $\sigma_2$-$\tn{NN}$ $\tilde{\phi}(\xx)$ carrying out $P(\xx)$. Each red rectangle represents one $\sigma_2$-$\tn{NN}$ $\phi_j(\xx)$ and each blue rectangle represents one $\sigma_1$-$\tn{NN}$ of width $2$ as an identity map of $\R$. There are $ab\geq J$ red blocks with $a$ rows and $b$ columns. When $ab\geq J$, these sub-NN blocks can carry out all monomials $\xx^{\xal_j}$. In each column, the results of the multiplications of $\xx^{\xal_j}$ are added up to (indicated by black arrows) the input of the narrow $\sigma_1$-$\tn{NN}$, which can carry the sum over to the next column. Each red arrow passes $\xx$ to the next red block. After the calculation of $b$ columns, $J$ additions of the monomials $\xx^{\xal_j}$ have been implemented, resulting in the output $P(\xx)$.}
		\label{fig:NN2}
	\end{figure}
	
We would like to remark that it is interesting to further optimize the above lemmas so that we can optimize the approximation theories in this paper. This is left as future work.

\subsection{Sobolev spaces}

We will use $D$ to denote the weak derivative of a single variable function and $D^\xal$ to denote the partial derivative $D^{\alpha_1}_1 D^{\alpha_2}_2 \dots D^{\alpha_d}_d$ of a $d$-dimensional function with $\alpha_i$ as the order of derivative $D_i$ in the $i$-th variable and $\xal=[\alpha_1,\dots,\alpha_d]^T$. Let $\Omega$ denote an open subset of $\mathbb{R}^d$ and $L^p(\Omega)$ be the standard Lebesgue space on $\Omega$ for $p\in[1,\infty]$. We write $\nabla f:=[D_1 f, \dots, D_d f]^T$. $\partial \Omega$ is the boundary of $\Omega$. Let $\mu(\cdot)$ be the Lebesgue measure.  For $f(x)\in\Wnp(\Omega)$, we use the notation
\[
\|f\|_{\Wnp(\Omega)} = \|f\|_{\Wnp} = \|f(x)\|_{\Wnp(\Omega,\mu)},
\]
if the domain is clear from the context and we use the Lebesgue measure. If $\pdf$ is a probability density function supported in $\Omega$ with $\mu_\pdf(\cdot)$ as its corresponding measure, then we use $\|f\|_{\Wnp(\Omega,\mu_\pdf)}$ to specify the measure in the Sobolev norm.

\begin{definition} (Sobolev Space) Let $n\in \mathbb{N}_0$ and $1\leq p\leq \infty$. Then we define the Sobolev space 
\[
\Wnp(\Omega):=\{ f\in L^p(\Omega):D^\xal f \in L^p(\Omega)\text{ for all }\xal\in \mathbb{N}_0^d \text{ with }|\xal|\leq n \}
\]
with a norm
\[
\|f\|_{\Wnp(\Omega)}:=\left( \sum_{0\leq |\xal| \leq n} \|D^\xal f\|^p_{L^p(\Omega)} \right)^{1/p}
\]
and 
\[
\|f\|_{\Wni(\Omega)} := \max_{0\leq |\xal|\leq n} \| D^\xal f\|_{L^\infty(\Omega)}.
\]
\end{definition}

Many results of function approximation rely on the domain $\Omega$ and we will use the following condition on $\Omega$ (see \cite{evans1998partial}, Appendix C.1).

\begin{definition}
\label{def:Ld}
(Lipschitz-domain) We say that a bounded and open set $\Omega\subset \mathbb{R}^d$ is a Lipschitz-domain if for each $\xx_0\in\partial \Omega$ there exists $r>0$ and a Lipschitz continuous function $g:\mathbb{R}^{d-1}\rightarrow \mathbb{R}$ such that 
\[
\Omega\cap B_{r,|\cdot|}(\xx_0)=\{ \xx\in B_{r,|\cdot|}(\xx):x_d>g(x_1,\dots,x_{d-1})\},
\]
after possibly relabeling and reordering the coordinate axes, where $B_{r,|\cdot|}(\xx)$ is a sphere centered at $\xx$ with a radius $r$.
\end{definition}

In this paper, we focus on an open, bounded, and convex domain $\Omega=(0,1)^d$, which is a Lipschitz domain (see \cite{grisvard2011elliptic}, Corollary 1.2.2.3).

Let us introduce some basic lemmas of Sobolev spaces here. 

%

\begin{lemma}\label{lem:pd}
Let $f$ be any $\sigma_1$-$\tn{NN}$ and $g\in\Wnp(\Omega)$ with $1\leq p\leq \infty$, then $fg\in\Wnp(\Omega)$ and there exists a constant $C=C(d,n,p)>0$ such that
\[
|fg|_{\Wnp(\Omega)} \leq C \left(  |f|_{\WW^{1,\infty}(\Omega)} |g|_{\WW^{n-1,p}(\Omega) } + \|f\|_{L^\infty(\Omega)} |g|_{\Wnp(\Omega)}   \right).
\]
For $p=\infty$, we have $C=1$.
\end{lemma}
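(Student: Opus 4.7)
The plan is to apply Leibniz's rule for weak derivatives and then exploit the piecewise-linear structure of the $\sigma_1$-$\tn{NN}$ $f$. As a ReLU network, $f$ is continuous and piecewise affine on $\Omega$: there is a finite partition of $\Omega$ into open polyhedral pieces $\{P_i\}$ on each of which $f$ is affine. In particular $f\in\WW^{1,\infty}(\Omega)$ with $|f|_{\WW^{1,\infty}(\Omega)}=\|\nabla f\|_{L^\infty(\Omega)}$, and $D^\beta f=0$ almost everywhere in $\Omega$ for every multi-index $\beta$ with $|\beta|\ge 2$.

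Fix any multi-index $\alpha$ with $|\alpha|\le n$. The classical Leibniz expansion gives the a.e.\ identity $D^\alpha(fg)=\sum_{\beta\le\alpha}\binom{\alpha}{\beta}D^\beta f\cdot D^{\alpha-\beta}g$, and since only the terms with $|\beta|\le 1$ contribute, it collapses to
\[
D^\alpha(fg)=f\,D^\alpha g+\sum_{|\beta|=1,\ \beta\le\alpha}\binom{\alpha}{\beta}D^\beta f\cdot D^{\alpha-\beta}g.
\]
Taking $L^p(\Omega)$-norms, bounding $\|D^\beta f\|_{L^\infty(\Omega)}\le|f|_{\WW^{1,\infty}(\Omega)}$ for $|\beta|=1$ and $\|f\|_{L^\infty(\Omega)}$ for $\beta=0$, and applying the triangle inequality with the combinatorial factors absorbed into a constant $C=C(d,n,p)$, yields
\[
\|D^\alpha(fg)\|_{L^p(\Omega)}\le\|f\|_{L^\infty(\Omega)}\|D^\alpha g\|_{L^p(\Omega)}+C|f|_{\WW^{1,\infty}(\Omega)}|g|_{\WW^{n-1,p}(\Omega)}.
\]
Summing over $|\alpha|\le n$ (or restricting to $|\alpha|=n$ if $|\cdot|_{\Wnp}$ denotes the pure seminorm) gives the stated inequality. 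For $p=\infty$ under the maximum-based convention for the seminorms, the combinatorial factors $\binom{\alpha}{\beta}$ and the count of surviving $\beta$ are absorbed into the seminorms $|g|_{\WW^{n-1,\infty}}$ and $|g|_{\WW^{n,\infty}}$ themselves, producing $C=1$.

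The main technical obstacle is justifying that $fg$ genuinely lies in $\WW^{n,p}(\Omega)$ rather than merely being piecewise in $W^{n,p}$ on each $P_i$: as a distribution, $D^\beta f$ for $|\beta|\ge 2$ is a singular measure supported on the $(d-1)$-dimensional interfaces between adjacent pieces, and one must confirm that these singular parts do not appear in the distributional derivative of $fg$. I would resolve this by mollification. Setting $f_\varepsilon:=\rho_\varepsilon*f$, the smooth family satisfies the uniform bounds $\|f_\varepsilon\|_{L^\infty}\le\|f\|_{L^\infty}$ and $\|\nabla f_\varepsilon\|_{L^\infty}\le|f|_{\WW^{1,\infty}(\Omega)}$, and for any compact set $K\Subset\bigcup_i P_i$ there exists $\varepsilon_0(K)>0$ such that $D^\beta f_\varepsilon\equiv 0$ on $K$ for all $|\beta|\ge 2$ and all $\varepsilon<\varepsilon_0$. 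Applying the collapsed Leibniz estimate to $f_\varepsilon g$ on each such $K$ gives a bound uniform in $\varepsilon$; then exhausting $\Omega$ by such compacts (the interfaces have Lebesgue measure zero and so contribute nothing to any $L^p$ norm) and passing $\varepsilon\to 0$ by dominated convergence in $L^p$ identifies the weak limit as $fg$, placing $fg$ in $\WW^{n,p}(\Omega)$ with the advertised estimate.
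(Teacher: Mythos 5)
Your algebraic core --- Leibniz's rule plus the observation that the a.e.\ pointwise derivatives $D^\beta f$ vanish for $|\beta|\ge 2$ --- is exactly the computation the paper performs, and you deserve credit for being the only one of the two arguments to notice that this computation is not yet a proof: one must check that the singular parts of the distributional derivatives of $f$ (measures supported on the $(d-1)$-dimensional interfaces between affine pieces) do not enter $D^\alpha(fg)$. Unfortunately, your proposed resolution does not close this gap, and no argument can, because for $n\ge 2$ the statement is false. Take $d=1$, $\Omega=(0,1)$, $f(x)=\sigma_1(x-\frac{1}{2})+\sigma_1(\frac{1}{2}-x)=|x-\frac{1}{2}|$, a one-hidden-layer $\sigma_1$-$\tn{NN}$, and $g\equiv 1\in\Wnp(\Omega)$ for every $n$. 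Then $fg=|x-\frac{1}{2}|$, whose second distributional derivative is $2\delta_{1/2}$, so $fg\notin\WW^{2,p}(\Omega)$, while the right-hand side of the claimed inequality vanishes since $|g|_{\WW^{1,p}}=|g|_{\WW^{2,p}}=0$. Concretely, two steps of your mollification argument fail: first, the family $D^\beta(f_\varepsilon g)$ with $|\beta|=2$ is not dominated in $L^p$ uniformly in $\varepsilon$ --- its $L^1$ mass on an $\varepsilon$-neighborhood of each interface stays bounded away from zero, so its distributional limit is a singular measure rather than the zero function; second, exhausting $\Omega$ minus the interfaces by compacts only shows that $fg$ restricted to each polyhedral piece lies in $\WW^{n,p}$ with uniform bounds, and gluing these into $\WW^{n,p}(\Omega)$ requires the derivatives of $fg$ up to order $n-1$ to have matching traces across the interfaces --- precisely what the jump of $\nabla f$ destroys. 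That the interfaces have measure zero is irrelevant: $|x-\frac{1}{2}|$ is piecewise in $\WW^{2,p}$ with zero second derivative on each piece, yet is not in $\WW^{2,p}(0,1)$.

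To be fair, the paper's own proof has the same defect: it asserts that $f\in\WW^{n,\infty}$ for every $n$ on the grounds that the pointwise higher derivatives vanish a.e., which conflates a.e.\ pointwise derivatives with weak derivatives. Both your argument and the paper's are correct for $n\le 1$, where only $f\in\WW^{1,\infty}$ and the first-order product rule are needed. For $n\ge 2$ the lemma can only be saved by strengthening the hypotheses, e.g.\ requiring $f\in C^{n-1}(\Omega)$ (so that the interface traces match), or restricting the domain of integration to a subregion on which $f$ is affine; as stated, your final claim that ``the weak limit is identified as $fg$, placing $fg$ in $\WW^{n,p}(\Omega)$'' is the step that breaks.
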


\begin{proof}
In the following proof, we will drop the dependence of the bounded domain $\Omega$ in all norms. Note that any $\sigma_1$-$\tn{NN}$ is a piecewise linear function and hence $D^\xal f=0$ if $\alpha_i\geq 2$ for some $i\in\{1,\dots,d\}$. Hence, $f\in\WW^{n,\infty}$ for any $n\in\N$. Note that $D^\xal (fg)=\sum_{\xal_1+\xal_2=\xal} c_{\xal_1,\xal_2}D^{\xal_1} g D^{\xal_2} f$ with constant coefficients $\{c_{\xal_1,\xal_2}\}$. By the triangle inequality and the inequalities
\[
\left( \sum_{i=1}^n a_i^p\right)^{1/p}\leq \sum_{i=1}^n a_i \leq c_1(n,p) \left( \sum_{i=1}^n a_i^p\right)^{1/p}
\]
for $a_i\geq 0$ and some $c_1(n,p)>0$, it is also easy to check that there exist a constant $c_2(d,n,p)$ such that
\[
|fg|_{\WW^{n,p}}\leq \|fg\|_{\Wnp}\leq c_2(d,n,p)\|f\|_{\WW^{n,\infty}}\|g\|_{\Wnp}.
\]
Hence, $fg\in\WW^{n,p}$. In the above inequality, $\|f\|_{\WW^{n,\infty}}$ can be replaced with $\|f\|_{\WW^{d,\infty}}$ if $n\geq d$ because $f$ is a $\sigma_1$-$\tn{NN}$. 

By the same inequalities, we can verify that 
\begin{eqnarray*}
|fg|_{\WW^{n,p}(\Omega)}&=& \left(\sum_{|\xal|=n}\| \sum_{\xal_1+\xal_2=\xal} c_{\xal_1,\xal_2} D^{\xal_1} g D^{\xal_2} f\|_{L^p}^p\right)^{1/p}  \\
 &\leq & \sum_{|\xal|=n} \| \sum_{\xal_1+\xal_2=\xal} c_{\xal_1,\xal_2} D^{\xal_1} g D^{\xal_2} f\|_{L^p} \\
 &\leq & \sum_{|\xal|=n} \sum_{\xal_1+\xal_2=\xal} |c_{\xal_1,\xal_2}|  \| D^{\xal_1} g D^{\xal_2} f\|_{L^p} \\
 &\leq & c_3(d,n)  \sum_{|\xal_1+\xal_2|=n} \| D^{\xal_1} g D^{\xal_2} f\|_{L^p} \\
 &\leq & c_3(d,n)  \sum_{|\xal_1+\xal_2|=n,\|\xal_2\|_{\ell^\infty}=1} \| D^{\xal_1} g D^{\xal_2} f\|_{L^p} \\
 & & + c_3(d,n)  \sum_{|\xal_1|=n} \| D^{\xal_1} g  f\|_{L^p} \\
 &\leq & C(d,n,p) \left(  |f|_{\WW^{1,\infty}(\Omega)} |g|_{\WW^{n-1,p}(\Omega) } + \|f\|_{L^\infty(\Omega)} |g|_{\WW^{n,p}(\Omega)}   \right).
\end{eqnarray*}

The case of $p=\infty$ is simple.
\end{proof}

\section{Deep Network Approximation in the Sobolev Space}
We prove the basic theories of Deep Network Approximation for parametrized functions in the Sobolev Space $\Wnp$ for $\sigma_1$-$\tn{NN}$s and $\sigma_2$-$\tn{NN}$s. The proofs of the theories developed here mainly follow previous works in \cite{Shen1,Shen2,Shen3,Ingo,Opschoor2019}.  The focus of \cite{Shen1,Shen2,Shen3} is on the optimal approximation rate of deep networks in terms of width $N$ and depth $L$ for continuous and $C^m$ functions, while \cite{Ingo,Opschoor2019} describe the approximation rate in terms of the number of parameters $W$ in deep networks for $\Wnp$ (with $n\in[0,1]$ and $p\in [1,\infty]$) and $\WW^{n,\infty}$ (for any $n\in \N$), respectively. We aim at characterizing deep network approximation for solution manifolds of parametric PDEs depending on $d_t$-dimensional parameters $t\in \R^{d_t}$. We focus on the case when the solutions $u(\xx,t)$ of PDEs are in the space $\Wnp\times C^m$. The theories developed here are new and are motivated by the NED method, where we consider the semi-discretization of $u(\xx,t)$ via a neural network $U(\xx;\xth(t))$, in which the smoothness of $\xth(t)$ is crucial for the justification of the semi-discretization scheme. Our theories can also be applied in other applications when the semi-discretization is applied, e.g., Uncertanty Quantification (UQ), where a mathematical model is described by a PDE parametrized by $t$. It is well know that PDE models in UQ analytically depend on $t$ (see \cite{CHKIFA2015400} for example) and hence the smoothness of $\xth(t)$ in $U(\xx;\xth(t))$ is required for the application of the semi-discretization.

\subsection{Preliminaries for Averaged Taylor Polynomials}

We provide several well-known lemmas and deep network approximation results mainly following \cite{Ingo} and \cite{Shen3}, where the key observation that polynomials and local Taylor expansions can be efficiently approximated by deep neural networks is proposed in \cite{yarotsky2017}.

We first introduce the averaged Taylor expansion for parametrized functions generalized from the averaged Taylor expansion in \cite{Ingo}.

\begin{definition}
\label{def:atp}
(Averaged Taylor Polynomial) Let $m,n\in\N$, $1\leq p\leq \infty$, and $f(\xx,t)\in\WW^{n-1,p}(\Omega)\times C^m(\Omega_t)$. Let $\xx_0\in\Omega$, $r>0$ such that for the ball $B:=B_{r,|\cdot|}(\xx_0)$ it holds that $B\subset \Omega$. The corresponding Taylor polynomial of order $n$ of $f(\xx,t)$ averaged over $B$ is defined for $x\in\Omega$ and each $t\in\Omega_t$ as
\begin{equation}
\label{eqn:atp1}
Q^n f(\xx,t):=\int_B T^n_{\yy} f(\xx,t) \phi(\yy) d\yy,
\end{equation}
where
\begin{equation}
\label{eqn:atp2}
T^n_{\yy} f(\xx,t) :=\sum_{|\xal|\leq n-1} \frac{1}{\xal !} D^{\xal} f(\yy,t) (\xx-\yy)^{\xal},
\end{equation}
and $\phi$ is an arbitrary cut-off function supported in $\overline{B}$, i.e.
\[
\phi\in C_c^\infty (\R^d)\text{ with }\phi(\xx)\geq 0\text{ for all }x\in\R^d, \tn{ supp }\phi=\overline{B}\text{, and }\int_{\R^n} \phi(\xx)d\xx=1.
\]
\end{definition}

Definition \ref{def:atp} is a generalization of Definition B.7 in \cite{Ingo} from $f\in\WW^{n-1,p}$ to $\WW^{n-1,p}\times C^m$. Following the proof of Lemma B.9 in \cite{Ingo}, we can show the below lemma, the proof of which is an immediate result of the Lebesgue's dominated convergence theorem and the fact that the average Taylor polynomial is a finite sum of monomials with coefficients linearly depending on a finite terms of derivatives of $f$, and the derivatives of $f$ is in the $L^p(B)$ with $p\in[1,\infty]$ and $B$ bounded. Hence, the proof is left for the reader.

\begin{lemma}
\label{lem:atp}
Let $m,n\in\N$, $1\leq p\leq \infty$, and $f(\xx,t)\in\WW^{n-1,p}(\Omega)\times C^m(\Omega_t)$. Let $\xx_0\in\Omega$, $r>0$, and $R\geq 1$ such that for the ball $B:=B_{r,|\cdot|}(\xx_0)$ it holds that $B\subset \Omega$ and $B\subset B_{R,\|\cdot\|_{\ell^\infty}}(0)$. The corresponding Taylor polynomial of order $n$ of $f(\xx,t)$ averaged over $B$ can be written as
\begin{equation}\label{eqn:para}
Q^n f(\xx,t)=\sum_{|\xal|\leq n-1} c_\xal(t) \xx^\xal
\end{equation}
for $\xx\in\Omega$ and $t\in \Omega_t$. Moreover, there exists a function $c=c(n,d,R)>0$ such that the coefficient functions $c_\xal(t)$ are in $C^m(\Omega_t)$ and bounded with $|c_\xal(t)|\leq c r^{-d/p} \|f(\xx,t)\|_{\WW^{n-1,p}(\Omega)}$ for all $\xal$ with $|\xal|\leq n-1$.
\end{lemma}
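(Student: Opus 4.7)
The plan is to reduce everything to a direct computation starting from the definition of the averaged Taylor polynomial, then verify separately the polynomial-in-$\xx$ form, the $C^m$ regularity in $t$, and the quantitative bound.

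First, I would expand the factor $(\xx-\yy)^{\xal}$ in \eqref{eqn:atp2} using the multi-dimensional binomial formula
\[
(\xx-\yy)^{\xal} \;=\; \sum_{\xbeta\leq \xal} \binom{\xal}{\xbeta} (-1)^{|\xal-\xbeta|}\, \yy^{\xal-\xbeta}\, \xx^{\xbeta},
\]
substitute this into \eqref{eqn:atp1}, and interchange the finite sum with the integral to collect terms by powers of $\xx$. This produces the representation in \eqref{eqn:para} with explicit coefficients
\[
c_{\xbeta}(t) \;=\; \sum_{\substack{\xbeta\leq \xal\\ |\xal|\leq n-1}} \frac{(-1)^{|\xal-\xbeta|}}{\xal!}\binom{\xal}{\xbeta} \int_B D^{\xal} f(\yy,t)\, \yy^{\xal-\xbeta}\,\phi(\yy)\, d\yy,
\]
which is a finite sum indexed only by multi-indices with $|\xal|\leq n-1$.

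Second, to establish $c_{\xbeta}\in C^m(\Omega_t)$, I would apply the standard differentiation-under-the-integral argument $m$ times. For each $k\le m$ and each direction of differentiation in $t$, the integrand is $D^{\xal}f(\yy,t)\yy^{\xal-\xbeta}\phi(\yy)$; since $f(\yy,\cdot)\in C^m(\Omega_t)$ and $D^{\xal}$ acts only in $\xx$, the partial derivative $\partial_t^k D^{\xal}f(\yy,t) = D^{\xal} \partial_t^k f(\yy,t)$ exists and is continuous in $t$. On any compact subset of $\Omega_t$ one has a uniform $L^p(B)$ bound on $\partial_t^k D^{\xal} f(\cdot,t)$, and $\yy^{\xal-\xbeta}\phi(\yy)$ is bounded on $B$, so Lebesgue's dominated convergence theorem justifies moving each $t$-derivative inside the integral and also yields continuity of the resulting integral in $t$.

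Third, for the quantitative estimate I would fix the cut-off function by rescaling a fixed $\phi_1\in C_c^\infty(\R^d)$ supported in the unit ball with $\int\phi_1 = 1$, setting $\phi(\yy)=r^{-d}\phi_1((\yy-\xx_0)/r)$; then $\|\phi\|_{L^{p'}(B)} = r^{-d/p}\|\phi_1\|_{L^{p'}}$, where $p'$ is the Hölder conjugate. Applying Hölder's inequality to each integral in the formula for $c_{\xbeta}(t)$ and using $|\yy^{\xal-\xbeta}|\leq R^{n-1}$ on $B\subset B_{R,\|\cdot\|_{\ell^\infty}}(0)$, one obtains
\[
\left|\int_B D^{\xal}f(\yy,t)\, \yy^{\xal-\xbeta}\phi(\yy)\,d\yy\right| \le R^{n-1}\|\phi_1\|_{L^{p'}}\, r^{-d/p}\, \|D^{\xal}f(\cdot,t)\|_{L^p(B)}.
\]
Summing the (finitely many, $n$- and $d$-controlled) combinatorial coefficients and bounding each $\|D^{\xal}f(\cdot,t)\|_{L^p(B)}$ by $\|f(\cdot,t)\|_{\WW^{n-1,p}(\Omega)}$ gives the desired estimate with a constant $c=c(n,d,R)$. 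The only mildly delicate points are keeping track of the correct power of $r$ coming from the $L^{p'}$ scaling of $\phi$ (especially checking the endpoint $p=\infty$, $p'=1$, where $r^{-d/p}=1$ and $\|\phi\|_{L^1}=1$) and verifying the dominating function needed for differentiation under the integral on arbitrary compact sub-intervals of $\Omega_t$; neither obstacle is substantive.
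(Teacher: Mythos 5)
Your proposal is correct and follows essentially the same route the paper intends: the paper leaves this proof to the reader, noting only that it follows the proof of Lemma B.9 in the cited reference and is "an immediate result of Lebesgue's dominated convergence theorem and the fact that the averaged Taylor polynomial is a finite sum of monomials with coefficients linearly depending on finitely many derivatives of $f$." Your binomial expansion of $(\xx-\yy)^{\xal}$, the differentiation-under-the-integral argument for the $C^m$ regularity of the coefficients, and the H\"older estimate with the $r^{-d/p}$ scaling of the cut-off (including the $p=\infty$ endpoint and the bound $|\yy^{\xal-\bm{\beta}}|\le R^{n-1}$ on $B$) are a faithful and correct elaboration of exactly that sketch.
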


As discussed in \cite{Ingo}, unlike the standard Taylor expansion that derives a truncated approximation around a point $\xx_0$, the average Taylor expansion relies on a ball $B$ and requires that the path between each $\xx_0\in B$ and each $\xx\in\Omega$ is contained in $\Omega$. This geometrical condition can be better interpreted if we introduce the following definitions before we apply the average Taylor polynomials in deep network approximation.

\begin{definition}
\label{def:star}
(Star-Shaped) Let $\Omega$, $B\subset \R^d$. Then $\Omega$ is called star-shaped with respect to $B$ if
\[
\overline{\text{conv}}(\{x\}\cup B)\subset \Omega \qquad \text{ for  all }x\in \Omega.
\]
\end{definition}

Next, the chunkiness of a domain $\Omega$ introduced below is important in the family of subdivisions of $\Omega$ for averaged Taylor expansions.

\begin{definition}
\label{def:chun}
(Chunkiness) Let $\Omega\subset \R^d$ be bounded. We define the set
\[
\mathcal{R}(\Omega):=\left\{ r>0: \text{there exists } \xx_0\in\Omega\text{ such that }\Omega\text{ is star-shaped w.r.t. } B_{r,|\cdot|}(\xx_0) \right\}.
\]
If $\mathcal{R}(\Omega)\neq \emptyset$, then we define $r^*_{\text{max}}(\Omega):=\sup \mathcal{R}(\Omega)$ and call $\gamma(\Omega):=\frac{\text{diam}(\Omega)}{r^*_{\text{max}}(\Omega)}$ the chunkiness parameter of $\Omega$.
\end{definition}

Recall that performing the averaged Taylor expansion locally is the key idea of deep network approximation for smooth functions. Hence, we introduce the partition of unity in \cite{yarotsky2017,Ingo} below for the purpose of a self-contained analysis with a slight modification.

\begin{lemma}
\label{lem:pu}
For any $d,K\in\N$ there exists a collection of functions
\[
\Psi=\{ \phi_\xk:\xk\in\{0,1,\dots,K\}^d\}
\]
with $\phi_\xk:\R^d\rightarrow \R$ for all $\xk\in\{0,\dots,K\}^d$ with the following properties:
\begin{enumerate}[label=(\roman*)]
\item $0\leq \phi_\xk(\xx)\leq 1$ for every $\phi_\xk\in\Psi$ and every $\xx\in \R^d$;
\item $\sum_{\phi_\xk\in\Psi} \phi_\xk(\xx)=1$ for every $\xx\in[0,1]^d$;
\item $\tn{supp }\phi_\xk\subset B_{1/K,\|\cdot\|_{\ell^\infty}}(\xk/K)$ for every $\phi_\xk\in\Psi$;
\item there exists a constant $c\geq 1$ such that $\|\phi_\xk\|_{L^\infty(\R^d)}\leq 1$ and $\|\phi_\xk\|_{\WW^{n,\infty}(\R^d)}\leq c \cdot K$ for $n\geq 1$;
\item there exists an absolute constant $c_1\geq 1$ such that for each $\phi_\xk\in\Psi$ there is a $\sigma_1$-$\NN$ $\Phi_\xk$ with a $d$-dimensional input, a $d$-dimensional output, one hidden layer, at most $6d$ neurons per layer, that satisfies
\[
\phi_\xk=\prod_{\ell=1}^d (\Phi_\xk)_\ell,
\]
$\|(\Phi_\xk)_\ell\|_{L^\infty}\leq 1$, and $\|(\Phi_\xk)_\ell\|_{\WW^{n,\infty}}\leq c_1 K$ for $n\geq 1$ and for all $\ell=1,\dots,d$, where $(\Phi_k)_\ell$ is the $\ell$-th output of $\Phi_\xk$.


\item there exists an absolute constant $c_2\geq 1$ such that for each $\phi_\xk\in\Psi$ there exists a $\sigma_2$-$\NN$ $\Phi_\xk$ with a $d$-dimensional input, a one-dimensional output, at most $\lceil \log_2(d)\rceil+1$ hidden layers and at most $\max\{4,2d\}$ neurons per hidden layer, that satisfies
\[
\phi_\xk=\Phi_\xk,
\]
$\|\Phi_\xk\|_{\WW^{n,\infty}}\leq (c_2 K)^n$ for all $n\in\{0,1,\dots,d\}$, and $\|\Phi_\xk\|_{\WW^{n,\infty}}\leq (c_2 K)^d$ for all $n\geq d+1$.
\end{enumerate}
\end{lemma}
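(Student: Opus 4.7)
I would use the standard tensor-product hat partition of unity and then realize it in two different network formats, one per activation family. Define the 1D tent $\psi(y) := \max\{1 - |y|, 0\}$, which has support $[-1,1]$, satisfies $\psi(0) = 1$, and obeys the identity $\sum_{k=0}^K \psi(Ky - k) = 1$ for $y \in [0,1]$. Set
\[
\phi_\xk(\xx) := \prod_{\ell = 1}^d \psi(Kx_\ell - k_\ell), \qquad \xk \in \{0, 1, \dots, K\}^d.
\]
Properties (i)--(iii) are then immediate: each factor lies in $[0,1]$, the tensor-product sum telescopes to $1$ on $[0,1]^d$, and the support of each factor is $[(k_\ell - 1)/K, (k_\ell+1)/K]$ so the product's support is exactly $B_{1/K,\|\cdot\|_{\ell^\infty}}(\xk/K)$. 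For (iv), $\|\phi_\xk\|_{L^\infty} \le 1$ is clear; the chain rule together with $|\psi'| \le 1$ a.e.\ gives $\|\partial_\ell \phi_\xk\|_{L^\infty} \le K$, and mixed derivatives with each coordinate differentiated at most once are bounded by a constant times $K$ (resp.\ $K^n$ if we fold higher mixed orders into the bound by absorbing a combinatorial factor into $c$).

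For (v), I would use the exact ReLU decomposition
\[
\psi(y) = \sigma_1(y + 1) - 2\sigma_1(y) + \sigma_1(y - 1),
\]
which is a single-hidden-layer $\sigma_1$-NN with three neurons. Stacking this across the $d$ coordinates produces a $d$-input, $d$-output, one-hidden-layer $\sigma_1$-NN $\Phi_\xk$ with at most $3d \le 6d$ neurons whose $\ell$th output is exactly $\psi(Kx_\ell - k_\ell)$. The factorisation $\phi_\xk = \prod_\ell (\Phi_\xk)_\ell$ is then the definition of $\phi_\xk$, and the bounds $\|(\Phi_\xk)_\ell\|_{L^\infty} \le 1$ and $\|(\Phi_\xk)_\ell\|_{\WW^{n,\infty}} \le c_1 K$ follow by inspecting the three-neuron formula.

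For (vi), I would combine the tent construction of (v) with the exact multiplication gadget afforded by $\sigma_2$: by Lemma~\ref{lem:NNex2}(iii)--(iv), $xy = \tfrac14((x+y)^2 - (x-y)^2)$ is realised exactly by a single $\sigma_2$-layer of four neurons. Since a $\sigma_2$-neuron can degenerate to a $\sigma_1$-neuron (choose $b = 0$), the first hidden layer(s) compute the $d$ tent values and the remaining $\lceil \log_2 d \rceil$ layers perform a balanced binary tree of pairwise products, halving the number of carried values each time. Each multiplication layer uses at most $4 \lceil d/2^j \rceil$ neurons, so all layers fit within width $\max\{4, 2d\}$, and the total depth is $\lceil \log_2 d \rceil + 1$. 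Because the product is computed exactly, the resulting network equals the same $\phi_\xk$ as in (v), so the derivative bounds reduce to estimating $D^\xal \phi_\xk$ for $\xal \in \{0,1\}^d$ (higher orders in a single coordinate vanish a.e.\ on the piecewise linear hats); the factorisation yields $\|D^\xal \phi_\xk\|_{L^\infty} \le K^{|\xal|}$, which upgrades to $\|\Phi_\xk\|_{\WW^{n,\infty}} \le (c_2 K)^n$ for $n \le d$ after absorbing binomial constants.

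\textbf{Main obstacle.} The delicate point is making the $\sigma_2$-NN in (vi) fit the very tight width bound $\max\{4, 2d\}$ together with depth $\lceil \log_2 d \rceil + 1$. A naive construction that first spends one layer computing all $d$ tents with the three-neuron ReLU representation needs width $3d$, exceeding the allowance. I would therefore either (a) interleave the tent computation with the first multiplication step, using a two-neuron representation $\psi(y) = \sigma_1(1 - \sigma_1(y) - \sigma_1(-y))$ whose intermediate outputs can be re-used to form pairwise sums/differences feeding the $\sigma_2$-squared units of the first multiplication layer, or (b) fold the extra tent layer into the ``$+1$'' in the depth bound and verify that the first multiplication layer still uses $\le 2d$ neurons. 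Either way, bookkeeping the width across the transition from tent evaluation to the multiplicative tree is the most error-prone step; every other part of the proof is standard tensor-product/FEM reasoning combined with the network templates from Lemma~\ref{lem:NNex1} and Lemma~\ref{lem:NNex2}.
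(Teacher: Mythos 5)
Your proposal follows essentially the same route as the paper: a tensor product of one\hyp{}dimensional piecewise\hyp{}linear bumps for (i)--(iv), an explicit one\hyp{}hidden\hyp{}layer ReLU realization of the bump for (v), and a binary tree of exact $\sigma_2$ multiplications for (vi). The only substantive difference is the bump itself: you take the hat $\max\{1-|y|,0\}$ on a grid of spacing $1/K$ (three ReLU neurons per coordinate), while the paper, quoting Lemma C.3 of \cite{Ingo}, uses a trapezoid with a flat plateau scaled by $3K$ (four neurons per coordinate). Both sum to one on $[0,1]$, have the required $\ell^\infty$\hyp{}ball supports, and fit the $6d$ budget in (v), so nothing hinges on this choice.

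Two of your hedges deserve comment because they point at real defects that the paper's proof does not resolve. First, the width obstacle you flag in (vi) is genuine: producing the $d$ bump values already costs $3d$ (hat) or $4d$ (trapezoid) neurons in the first hidden layer, exceeding the claimed $\max\{4,2d\}$ for $d\geq 2$; your fix (a) spends an extra hidden layer on the outer $\sigma_1$ in $\sigma_1(1-\sigma_1(y)-\sigma_1(-y))$, and in fix (b) the ``$+1$'' in the depth is already consumed by the bump layer. The paper simply declares that ``the first part is the $\sigma_1$-NN in Part (v)'' without reconciling the width count, so you have identified a gap in the lemma as stated rather than introduced one. Second, your parenthetical ``resp.\ $K^n$'' in (iv) is the correct scaling: for $d\geq 2$ the mixed derivative $D^{(1,1,0,\dots,0)}\phi_\xk$ is of order $K^2$, so the stated bound $c\cdot K$ cannot hold for $n\geq 2$; the paper's justification (``$\phi_\xk$ is a piecewise linear function'') is false for the multivariate tensor product, and the bound should read $(cK)^{\min\{n,d\}}$ as in (vi).
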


\begin{proof}
The proof of Part (i) to (iii) can be found in Lemma C.3 in \cite{Ingo}, which also proves Part (iv) when $n=0$ and $1$. The $\phi_\xk$ introduced in Lemma C.3 in \cite{Ingo} has an explicit formula as follows:
\begin{equation}
\label{eqn:pt}
\phi_\xk(\xx):=\prod_{\ell=1}^d \psi\left( 3K\left( x_\ell-\frac{k_\ell}{K}\right)\right),
\end{equation}
where
\[
\psi:\R\rightarrow\R,\qquad \psi(x):=  
\begin{cases}
1 &\text{for $|x|<1$},\\
0 &\text{for $2<|x|$},\\
2-|x| &\text{for $1\leq|x|\leq 2$}.
\end{cases}
\]
The case of $n>1$ in Part (iv) is also true since $\phi_\xk$ is a piecewise linear function. 

Part (v) in this paper is generalized from Part (v) of Lemma C.3 in \cite{Ingo} as well and it is also true since $\phi_\xk$ is a piecewise linear function. 

In Part (vi), the construction of the $\sigma_2$-$\tn{NN}$ is based on the fact that: 1) A one-hidden layer $\sigma_2$-$\tn{NN}$ with width $2$ can exactly carry out the square function; 2) A one-hidden layer $\sigma_2$-$\tn{NN}$ with width $4$ can exactly implement a multiplication function by Lemma \ref{lem:NNex2}. Hence, the target $\sigma_2$-$\tn{NN}$ consists of two parts: the first part is the $\sigma_1$-$\tn{NN}$ in Part (v); the second part only has $\sigma_2$ activation functions with width $2d$ carrying out $\frac{d}{2^s}$ multiplications in the $s$-th hidden layer. The bounds of $\Phi_\xk$ in the Sobolev norm is given by the fact that $\Phi_\xk=\phi_\xk$ with an explicit formula in \eqref{eqn:pt}.
\end{proof}

Finally, we introduce a lemma to quantify the approximation error of the  averaged Taylor approximation in the Sobolev semi-norm.

\begin{lemma}
\label{lem:BH}
(Bramble-Hilbert) Let $\Omega\subset \mathbb{R}^d$ be open and bounded, $x_0\in\Omega$ and $r>0$ such that $\Omega$ is star-shaped with respect to $B:=B_{r,|\cdot|}(x_0)$, and $r>(1/2)r^*_{max}(\Omega)$. Moreover, let $n\in\mathbb{N}$, $1\leq p\leq \infty$ and denote by $\gamma(\Omega)$ the chunkiness parameter of $\Omega$. Then there exists a constant $C=C(n,d,\gamma)>0$ such that for all $f\in \WW^{n,p}(\Omega)\times C^m(\Omega_t)$
\[
|f(\xx,t)-Q^n f (\xx,t)|_{\Wkp(\Omega)} \leq C h^{n-k} |f(\xx,t)|_{\Wnp(\Omega)}\quad \text{ for } k=0,1,\dots,n,
\]
where $Q^n f(\xx,t)$ denotes the Taylor polynomial of order $n$ of $f(\xx,t)$ for a fixed $t$ averaged over $B$ and $h=\tn{diam}(\Omega)$.
\end{lemma}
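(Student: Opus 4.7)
The plan is to reduce the parametric statement to the classical (non-parametric) Bramble--Hilbert lemma by treating $t \in \Omega_t$ as a frozen parameter. For each fixed $t$, the function $\xx \mapsto f(\xx,t)$ belongs to $\Wnp(\Omega)$, and the seminorm $|f(\xx,t) - Q^n f(\xx,t)|_{\Wkp(\Omega)}$ on the left-hand side only records spatial weak derivatives of the difference $f(\cdot,t)-Q^n f(\cdot,t)$. Since the averaging integral in Definition \ref{def:atp} is taken with respect to $\yy \in B \subset \Omega$ with no coupling to $t$, the quantity $Q^n f(\cdot,t)$ coincides with the classical averaged Taylor polynomial of the frozen-$t$ function $f(\cdot,t)$. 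Hence it suffices to prove the unparametrized estimate $|g - Q^n g|_{\Wkp(\Omega)} \leq C h^{n-k} |g|_{\Wnp(\Omega)}$ for every $g \in \Wnp(\Omega)$ with a constant $C = C(n,d,\gamma)$ independent of $g$, and then apply it pointwise with $g = f(\cdot,t)$.

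For the unparametrized inequality I would follow the standard three-step argument, as in Theorem~B.11 of \cite{Ingo}. Step one: observe that $Q^n$ acts as a projection onto polynomials of total degree at most $n-1$, i.e. $Q^n P = P$ for every such $P$; this follows directly from expanding \eqref{eqn:atp2} and using $\int_B \phi(\yy)\,d\yy = 1$. Step two: establish the base-case $L^p$-estimate $\|g - Q^n g\|_{L^p(\Omega)} \leq C h^n |g|_{\Wnp(\Omega)}$ via a Sobolev integral representation of the remainder on the star-shaped domain $\Omega$, whose constants are controlled solely by $n$, $d$, $p$, and the chunkiness $\gamma(\Omega)$; the hypothesis $r > (1/2) r^*_{\max}(\Omega)$ ensures that $B$ is comparable to the best inscribed ball, so no further geometric constants intrude. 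Step three: for a multi-index $\xal$ with $|\xal| = k \leq n-1$, use the commutation identity $D^\xal_{\xx} Q^n f = Q^{n-k}(D^\xal f)$ (which follows by differentiating \eqref{eqn:atp2} term-by-term and reindexing), then apply Step two to $D^\xal f \in \WW^{n-k,p}(\Omega)$ to obtain $\|D^\xal(f - Q^n f)\|_{L^p} \leq C h^{n-k} |D^\xal f|_{\WW^{n-k,p}}$. Summing over $|\xal| = k$ yields the claimed bound. The case $k = n$ is automatic because $Q^n f$ is a polynomial of degree at most $n-1$, so its $n$-th derivatives vanish and the inequality collapses to $|f|_{\Wnp} \leq C |f|_{\Wnp}$.

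The main technical point is the base-case bound in Step two, which rests on an integral representation of $g - Q^n g$ via a Riesz-potential-type operator acting on the top-order derivatives of $g$ over the star-shaped domain; careful bookkeeping is required to confirm that the implicit constants depend only on $n$, $d$, $p$, and $\gamma$, which is precisely the role of the chunkiness parameter. No additional work is required for the parametric extension: the cut-off $\phi$ and all geometric constants are chosen once and for all, and none of them depend on $t$, so the same $C(n,d,\gamma)$ serves uniformly across $t \in \Omega_t$. The $C^m$-regularity of $f$ in $t$ plays no role in this lemma; it is separately inherited by the coefficients of $Q^n f$, as already recorded in Lemma \ref{lem:atp}.
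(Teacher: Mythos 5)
Your proposal is correct and matches the paper's approach: the paper simply observes that for fixed $t$ the statement is the classical Bramble--Hilbert lemma (citing Lemma 4.3.8 of \cite{Brenner2008}) and leaves the parametric extension to the reader, which is exactly your freeze-$t$ reduction. Your additional sketch of the classical proof (polynomial reproduction, the $L^p$ base case via the Sobolev integral representation on star-shaped domains, and the commutation $D^\xal Q^n f = Q^{n-k}D^\xal f$) is the standard argument contained in that reference, so nothing further is needed.
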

The proof of Lemma \ref{lem:BH} for a fixed $t$ can be found in Lemma 4.3.8 of \cite{Brenner2008}. The proof of Lemma \ref{lem:BH} can be easily obtained and we leave it for the reader.

Now we are ready to quantify localized polynomial approximation in the Sobolev space using the partition of unity in Lemma \ref{lem:pu} and the Bramble-Hilbert Lemma as follows.

%

\begin{lemma}
\label{lem:atpa}
Let $d,K\in\N$, $n\in\N_+$, $s\in\N$ with $s\leq n-1$, $1\leq p\leq \infty$, and $\Psi=\Psi(d,K)=\{\phi_{\xk}:\xk\in\{0,\dots,K\}^d\}$ be the partition of unity from Lemma \ref{lem:pu}. Then for any $f(\xx,t)\in \Wnp((0,1)^d)\times C^m(\Omega_t)$, there exist polynomials $\pkt(\xx)=\sum_{\xal\in\N_0^d,|\xal|\leq n-s} c_{f,\xk,\xal,s}(t) \xx^\xal$ for $\xk\in\{0,\dots,K\}^d$ with the following properties:

\begin{enumerate}[label=(\roman*)]
\item Let $f_K:=\sum_{\xk\in\{0,\dots,K\}^d} \phi_\xk \pkt$, then the operator $T_s:\Wnp((0,1)^d)\times C^m(\Omega_t)\rightarrow\WW^{s,p}((0,1)^d)\times C^m(\Omega_t)$ with $T_s f=f-f_K$ is linear and bounded with
\[
\|T_sf\|_{\WW^{s,p}((0,1)^d} \leq C_s \left( \frac{1}{K} \right)^{n-s} \|f\|_{\Wnp((0,1)^d)}
\]
for some constant $C_s=C_s(n,d,p)$. 
\item Furthermore, there is a function $c_s=c_s(d,n)>0$ such that the coefficients of the polynomials $\pkt$ satisfy
\[
|c_{f,\xk,\xal,s}(t)|\leq c_s\|\tilde{f}\|_{\Wnp(\Omega_{\xk,K})} K^{d/p}
\]
for all $\xal$ with $|\xal|\leq n-s$ and $\xk\in\{0,\dots,K\}^d$, where $\Omega_{\xk,K}:=B_{\frac{1}{K},\|\cdot\|_{\ell^\infty}}\left(\frac{\xk}{K}\right)$ and $\tilde{f}\in\Wnp(\R^d)\times C^m(\Omega_t)$ is an extension of $f$.
\end{enumerate}
\end{lemma}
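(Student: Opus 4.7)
The plan is to construct $\pkt$ as a localized averaged Taylor polynomial of $f$ and to glue the local pieces via the partition of unity $\Psi$, bounding the error through the Bramble--Hilbert lemma and a Leibniz-style product estimate. First I would invoke Stein's extension theorem for the Lipschitz domain $(0,1)^d$ to obtain $\tilde f\in\Wnp(\R^d)\times C^m(\Omega_t)$ with $\|\tilde f(\cdot,t)\|_{\Wnp(\R^d)}\le C\|f(\cdot,t)\|_{\Wnp((0,1)^d)}$ uniformly in $t$ and with $C^m$ regularity in $t$ preserved, since the extension operator is linear and $t$-independent. For each $\xk\in\{0,\dots,K\}^d$ I would pick a ball $\BkK\subset\OkK$ of radius of order $1/K$ chosen so that $\OkK$ is star-shaped with respect to $\BkK$ with a chunkiness parameter uniform in $\xk$ and $K$, and set $\pkt(\xx):=Q^n\tilde f(\xx,t)$ as the order-$n$ averaged Taylor polynomial of $\tilde f(\cdot,t)$ on $\BkK$. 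Lemma~\ref{lem:atp} then gives that $\pkt$ is a polynomial in $\xx$ of degree at most $n-1$ whose coefficients $c_{f,\xk,\xal,s}(t)$ lie in $C^m(\Omega_t)$ and satisfy $|c_{f,\xk,\xal,s}(t)|\le c_s\,K^{d/p}\|\tilde f(\cdot,t)\|_{\Wnp(\OkK)}$, yielding part~(ii) (coefficients of degree greater than $n-1$ are set to zero).

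Next, Bramble--Hilbert (Lemma~\ref{lem:BH}) applied on $\OkK$ yields, for $j=0,1,\dots,n$,
\[
|\tilde f(\cdot,t)-\pkt|_{\WW^{j,p}(\OkK)}\le C\,K^{j-n}\,|\tilde f(\cdot,t)|_{\Wnp(\OkK)},
\]
with constants uniform in $\xk$ and $K$. On $(0,1)^d$ the identity $\sum_\xk\phi_\xk\equiv 1$ yields $f-f_K=\sum_\xk\phi_\xk(f-\pkt)$, so combining the Leibniz rule with the bounds $\|\phi_\xk\|_{\WW^{i,\infty}(\R^d)}\le c_iK^i$ from Lemma~\ref{lem:pu}(iv) gives
\[
|\phi_\xk(\tilde f(\cdot,t)-\pkt)|_{\WW^{s,p}(\OkK)}\le C\sum_{j=0}^s K^{s-j}\,K^{j-n}\,\|\tilde f(\cdot,t)\|_{\Wnp(\OkK)}\le C\,K^{s-n}\,\|\tilde f(\cdot,t)\|_{\Wnp(\OkK)}.
\]
Finite overlap of the cubes $\OkK$ (each point of $\R^d$ lies in at most $2^d$ of them) together with $\tn{supp}\,\phi_\xk\subset\OkK$ then delivers, after raising to the $p$-th power, summing over $\xk$, and invoking the Stein extension bound,
\[
\|f-f_K\|_{\WW^{s,p}((0,1)^d)}^p\le C\,K^{(s-n)p}\sum_\xk\|\tilde f(\cdot,t)\|_{\Wnp(\OkK)}^p\le C\,K^{(s-n)p}\,\|f(\cdot,t)\|_{\Wnp((0,1)^d)}^p,
\]
with the $p=\infty$ case handled by replacing sums by maxima. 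Linearity and boundedness of $T_s$ are immediate from the construction.

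The main obstacle will be the Leibniz step: the canonical piecewise-linear $\phi_\xk$ has weak partial derivatives with singular parts for orders $\ge 2$, so the full $\WW^{s,\infty}$ bounds asserted in Lemma~\ref{lem:pu}(iv) require justification for $s\ge 2$. I would either restrict to the smoothed $\sigma_2$-NN realization of $\phi_\xk$ from Lemma~\ref{lem:pu}(vi) (valid for $s\le d$) or carry out a mollification argument replacing each $\phi_\xk$ by $\phi_\xk*\rho_\epsilon$ at scale $\epsilon\sim 1/K$ and then passing to the limit; the remaining steps are routine bookkeeping with constants depending only on $n$, $d$, $p$.
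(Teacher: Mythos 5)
Your proposal follows essentially the same route as the paper: Stein extension of $f$ to $\R^d$, averaged Taylor polynomials on balls $\BkK$ of radius $\sim 1/K$ inside the cubes $\OkK$ (giving part~(ii) via Lemma~\ref{lem:atp}), Bramble--Hilbert for the local error, a product estimate against the cutoffs, and a finite-overlap summation with H\"older to globalize. The only point of divergence is the Leibniz step: where you invoke the full chain $\sum_{j=0}^{s}K^{s-j}K^{j-n}$ and then worry about $\WW^{s,\infty}$ bounds on $\phi_\xk$ for $s\ge 2$, the paper instead applies its Lemma~\ref{lem:pd}, which exploits the piecewise linearity of the $\sigma_1$-realized cutoffs so that only the terms $|\phi_\xk|_{\WW^{1,\infty}}|\tf-\pk|_{\WW^{s-1,p}}$ and $\|\phi_\xk\|_{L^\infty}|\tf-\pk|_{\Wsp}$ survive, avoiding any appeal to higher-order bounds on $\phi_\xk$. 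Your caution here is not misplaced --- the pure second distributional derivatives of the tensor-product hat functions carry singular parts, so the blanket claim $\|\phi_\xk\|_{\WW^{n,\infty}}\le cK$ for all $n\ge 1$ in Lemma~\ref{lem:pu}(iv) (and the membership $fg\in\Wnp$ asserted in Lemma~\ref{lem:pd}) is delicate for $n\ge 2$; your proposed mollification at scale $\epsilon\sim 1/K$, or the use of smoother cutoffs, is a legitimate way to make that step airtight, and in that respect your argument is if anything more careful than the paper's.
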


\begin{proof}
For a fixed $t$, the proof of this lemma is similar to the proof of Lemma C.4 in \cite{Ingo} and the first part of Theorem 1 in \cite{yarotsky2017}. The idea is to use the partition of unity and the averaged Taylor polynomials to derive local approximations. The global approximation is the combination of local approximations and its error can be estimated via the Bramble-Hilbert Lemma \ref{lem:BH}.

Let $E:\Wnp((0,1)^d)\times C^m(\Omega_t) \rightarrow \Wnp(\R^d)\times C^m(\Omega_t)$ be the extension operator of the domain $\Omega=(0,1)^d$ from \cite{10.2307/j.ctt1bpmb07} (Theorem VI.3.1.5) and set $\tf:=Ef$. Note that 
\begin{equation}\label{eqn:gb3}
|\tf|_{\WW^{s,p}(\Omega)}\leq \|\tf\|_{\Wnp(\R^d)}\leq C_E \|f\|_{\Wnp((0,1)^d)},
\end{equation}
for arbitrary $\Omega\subset (0,1)^d\subset \R^d$ and $1\leq s\leq n$, where $C_E=C_E(d,n,p)$ is the norm of the extension operator. 

\textbf{Step 1(Averaged Taylor polynomials):} For each $\xk\in\{0,\dots,K\}^d$, we set
\[
\OkK := B_{\frac{1}{N},\|\cdot\|_{\ell^\infty}}\left( \frac{\xk}{K}\right) \quad\text{and}\quad \BkK := B_{\frac{3}{4K},|\cdot|}\left( \frac{\xk}{K} \right),
\]
and denote by $p_{\xk}=\pkt$ the Taylor polynomial of order $n$ of $\tf$ averaged over $\BkK$ (see Definition \ref{def:atp}). It follows from Lemma \ref{lem:atp} that we can write $p_\xk=\sum_{|\xal|\leq n-s} c_{f,\xk,\xal,s}(t) \xx^\xal$ and that there is a constant $\bar{c}_s=\bar{c}_s(n,d)>0$ such that
\[
|c_{f,\xk,\xal,s}|\leq \bar{c}_s\|\tf\|_{\Wnp(\OkK)}\left( \frac{3}{4K}\right)^{-d/p}\leq c_s \|\tf\|_{\Wnp(\OkK)} K^{d/p}
\]
for $\xk\in\{0,\dots,K\}^d$, where $c_s=c_s(d,n,p)>0$ is a suitable constant.  Hence, Part (ii) of this lemma is proved.

\textbf{Step 2 (Local estimates in $\|\cdot\|_{\Wsp}$):} It is easy to check that under the setting of this lemma, the conditions of the Bramble-Hilber Lemma \ref{lem:BH} are satisfied (See Step $2$ of the proof of Lemma C.4 of \cite{Ingo}). Hence, we can derive the accuracy of local approximations via
\begin{equation}\label{eqn:fn0}
\|\tf -\pk\|_{L^p(\OkK)}\leq \tilde{c}_1 \left( \frac{2\sqrt{d}}{K}\right)^n |\tf|_{\Wnp(\OkK)}\leq \tilde{c}_2 \left(\frac{1}{K}\right)^n \|\tf\|_{\Wnp(\OkK)}.
\end{equation}
Here, $\tilde{c}_1=\tilde{c}_1(n,d)>0$ is from Lemma \ref{lem:BH} that only depends on $n$ and $d$, since the chunkiness parameter of $\OkK$ is a constant depending only on $d$. $\tilde{c}_2=\tilde{c}_2(n,d)>0$ is chosen as a suitable constant. Similarly, we have
\begin{equation}\label{eqn:fn1_2}
|\tf-\pk|_{\WW^{s,p}(\OkK)}\leq \tilde{c}_{s+2}\left( \frac{1}{K}\right)^{n-s}\|\tf\|_{\Wnp(\OkK)},
\end{equation}
for $s=1,2,\dots,n$, where $\tilde{c}_{s+2}=\tilde{c}_{s+2}(n,d)>0$ is a suitable constant. Combining \eqref{eqn:fn0} and \eqref{eqn:fn1_2} with the cut-off functions in the partition of unity, we have
\begin{equation}\label{eqn:loc1}
\|\phi_\xk(\tf-\pk)\|_{L^p(\OkK)}\leq \|\phi_\xk\|_{L^\infty(\OkK)}\|\tf-\pk\|_{L^p(\OkK)}\leq \tilde{c}_2\left(\frac{1}{K}\right)^n \|\tf\|_{\Wnp(\OkK)}.
\end{equation}
Since the cut-off functions are $\sigma_1$-$\tn{NN}$s, using Lemma \ref{lem:pd}, we have
\begin{eqnarray}\label{eqn:loc2}
|\phi_\xk(\tf-\pk)|_{\WW^{s,p}(\OkK)}&\leq& C' |\phi_\xk|_{\WW^{1,\infty}(\OkK)}|\tf-\pk|_{\WW^{s-1,p}(\OkK)}\nonumber \\
 &&+ C' \|\phi_\xk\|_{L^\infty(\OkK)}|\tf-\pk|_{\WW^{s,p}(\OkK)}\nonumber\\
 &\leq &C'\tilde{c}K\tilde{c}_{s+1}\left(\frac{1}{K}\right)^{n-s+1}\|\tf\|_{\Wnp(\OkK)} \nonumber\\
 & &+ C' \tilde{c}_{s+2}\left(\frac{1}{K}\right)^{n-s}\|\tf\|_{\Wnp(\OkK)} \nonumber\\
 &=& \tilde{C}_s \left(\frac{1}{K}\right)^{n-s}\|\tf\|_{\Wnp(\OkK)}
\end{eqnarray}
for $s=1,2,\dots,n$, where $\tilde{c}$ is an absolute constant, $C'=C'(d,s,p)>0$ and $\tilde{C}_s = \tilde{C}_s(d,p)>0$. By \eqref{eqn:loc1} and \eqref{eqn:loc2}, it holds that
\begin{equation}\label{eqn:loc3}
\|\phi_\xk(\tf-\pk)\|_{\WW^{s,p}(\OkK)}\leq \bar{C}_s\left(\frac{1}{K}\right)^{n-s}\|\tf\|_{\Wnp(\OkK)}
\end{equation}
for some $\bar{C}_s=\bar{C}_s(d,p)>0$ for $s=1,2,\dots,n$.

\textbf{Step 3 (Global estimate in the $\|\cdot\|_{\Wsp}$):} By the property of the partition of unity, we have
\[
\tf(\xx)=\sum_{\xk\in\{0,\dots,K\}^d} \phi_\xk(\xx)\tf(\xx),\quad\text{for a.e. }x\in(0,1)^d.
\]
Note that $\tf$ is an extension of $f$. Hence, for $s\in\{0,1,\dots,n\}$, we have
\begin{eqnarray}\label{eqn:gb0}
\|f-\sum_{\xk\in\{0,\dots,K\}^d} \phi_\xk \pk \|^p_{\WW^{s,p}(\zod)} &=&
\|\tf-\sum_{\xk\in\{0,\dots,K\}^d} \phi_\xk \pk \|^p_{\WW^{s,p}(\zod)} \nonumber\\
&=& \|\sum_{\xk\in\{0,\dots,K\}^d} \phi_\xk (\tf-\pk) \|^p_{\WW^{s,p}(\zod)} \\
&\leq & \sum_{\tilde{\xk}\in\{0,\dots,K\}^d}  \|\sum_{\xk\in\{0,\dots,K\}^d} \phi_\xk (\tf-\pk) \|^p_{\WW^{s,p}(\Omega_{\tilde{\xk},K})},\nonumber
\end{eqnarray}
where in the last step we apply the same partition of unity with a different index $\tilde{\xk}$ to decompose the domain of the $\WW^{s,p}$-norm. Note that for  
\begin{eqnarray}\label{eqn:gb1}
\|\sum_{\xk\in\{0,\dots,K\}^d} \phi_\xk (\tf-\pk) \|_{\WW^{s,p}(\Omega_{\tilde{\xk},K})} &\leq &
\sum_{\substack{\xk\in\{0,\dots,K\}^d\\ \|\xk-\tilde{\xk}\|_{\ell^\infty}\leq 1}} \| \phi_\xk (\tf-\pk) \|_{\WW^{s,p}(\Omega_{\tilde{\xk},K})} \nonumber\\
&\leq &\sum_{  \substack{\xk\in\{0,\dots,K\}^d\\ \|\xk-\tilde{\xk}\|_{\ell^\infty}\leq 1}  } \| \phi_\xk (\tf-\pk) \|_{\WW^{s,p}(\Omega_{{\xk},K})} \\
&\leq &  \bar{c}_s \left( \frac{1}{K}\right)^{n-s} \sum_{ \substack{\xk\in\{0,\dots,K\}^d\\ \|\xk-\tilde{\xk}\|_{\ell^\infty}\leq 1}  } \| \tf\|_{\WW^{n,p}(\Omega_{{\xk},K})} ,\nonumber
\end{eqnarray}
where $\bar{c}_s:=\tilde{c}_2$ if $s=0$ and $\bar{c}_s:=\bar{C}_s$ if $s\in\{1,2,\dots,n\}$. In the proof of \eqref{eqn:gb1}, we have used the triangle inequality together with the support property (iii) in Lemma \ref{lem:pu} in the first inequality; the support property (iii) is used in the second inequality; and the last inequality comes from \eqref{eqn:loc1} and \eqref{eqn:loc3}.

Finally, by the definition of $f_K$, \eqref{eqn:gb0}, and \eqref{eqn:gb1}, we have
\begin{eqnarray}\label{eqn:gb4}
\|f-f_K\|_{\Wsp(\zod)}^p &\leq & \sum_{\tilde{\xk}\in\{0,\dots,K\}^d}  \bar{c}_s^p  \left( \frac{1}{K}\right)^{p(n-s)}  \left( \sum_{  \substack{\xk\in\{0,\dots,K\}^d\\ \|\xk-\tilde{\xk}\|_{\ell^\infty}\leq 1}  } \| \tf\|_{\WW^{n,p}(\Omega_{{\xk},K})}\right)^p\nonumber\\
(\text{H{\"o}lder's inequality})\quad &\leq & \bar{c}_s^p  \left( \frac{1}{K}\right)^{p(n-s)}   \sum_{\tilde{\xk}\in\{0,\dots,K\}^d}  \sum_{  \substack{\xk\in\{0,\dots,K\}^d\\ \|\xk-\tilde{\xk}\|_{\ell^\infty}\leq 1}  } \| \tf\|_{\WW^{n,p}(\Omega_{{\xk},K})}^p 3^{dp/q}\nonumber\\
&\leq &  \bar{c}_s^p  3^{dp/q} \left( \frac{1}{K}\right)^{p(n-s)}  3^d \sum_{\tilde{\xk}\in\{0,\dots,K\}^d}  \| \tf\|_{\WW^{n,p}(\Omega_{{\xk},K})}^p \nonumber\\
&\leq &  \bar{c}_s^p  3^{dp/q} \left( \frac{1}{K}\right)^{p(n-s)}  3^d 2^d   \| \tf\|_{\WW^{n,p}(\cup_{\tilde{\xk}\in\{0,\dots,K\}^d}\Omega_{{\xk},K})}^p \nonumber\\
&=&  \bar{c}_s^p  3^{dp/q} \left( \frac{1}{K}\right)^{p(n-s)}  3^d 2^d   \| \tf\|_{\WW^{n,p}(\R^d)}^p
\end{eqnarray}
where $\frac{1}{p}+\frac{1}{q}=1$ and the last two steps comes from the definition of the partition of unity. By \eqref{eqn:gb3} and \eqref{eqn:gb4}, we have
\[
\|T_sf\|_{\WW^{s,p}((0,1)^d)}= \|f-f_K\|_{\Wsp(\zod)}  \leq C_s  \left( \frac{1}{N} \right)^{n-s} \|f\|_{\Wnp((0,1)^d)} 
\]
for $s\in\{0,1,\dots,n\}$, where $C_s =C_E \bar{c}_s  3^{d/q} \left( \frac{1}{K}\right)^{n-s}  6^{d/p} $. Hence, Part (i) of this lemma is proved.
\end{proof}

\subsection{Main Theorems}

We will prove our main approximation theory theorems here. The approximation rates in these theorems are certainly not tight. Following the ideas in \cite{Shen1,Shen2,Shen3,yarotsky18a,yarotsky19,Ingo,Opschoor2019}, nearly optimal approximation rates can be derived in the Sobolev space; but network parameters cannot be continuous in the target function by the theory of optimal nonlinear approximation \cite{Ron} (Theorem 4.2), which is not desired in our NED model.

First, we will show a theorem quantifying the approximation capacity of $\sigma_1$-$\tn{NN}$s to approximate continuous functions on $[0,1]^d$ to warn up. The approximation error will be measured in the $\WW^{0,p}$-norm with $p\in[1,\infty]$ using the modulus of continuity of a function $f$ defined via
\begin{equation*}
\omega_f(r):= \sup\big\{|f(\bm{x})-f(\bm{y})|:\bm{x},\bm{y}\in [0,1]^d,\ \|\bm{x}-\bm{y}\|_2\le r\big\},\quad \tn{for any $r\ge 0$}.
\end{equation*}


\begin{proof}[Proof of Theorem \ref{thm:ap0}]
When $t$ is fixed, it was proved in \cite{yarotsky18a} that there exists a $\sigma_1$-$\tn{NN}$ $\Phi(\xx;\xth(t))$ such that 
\begin{equation}
\label{eqn:ap0sum}
\Phi(\xx;\xth(t))=\sum_{\xk\in\{0,\dots,K\}^d} \phi(K\xx-\xk) f(\xk/K,t)\approx f(\xx,t),
\end{equation}
where $\phi(\xx)$ is the ``spike function" from $\R^d\rightarrow\R$ defined as
\[
\phi(\xx):=\max\left\{0, \min\left( \min_{k\neq s}(1+x_k-x_s), \min_k(1+x_k)  , \min_k(1-x_k) \right)\right\}.
\]
By Lemma \ref{lem:NNex1} (vii), the spike function can be exactly represented as a $\sigma_1$-$\tn{NN}$ with width $12+2d$ and depth $d^2-d+1$. This $\sigma_1$-$\tn{NN}$ performs up to three $\min$-operators per layer for $d^2-d+1$ layers, each of these $\min$-operators is for the three components of $\phi(\xx)$ inside its firs $\min$. In the first component inside the first $\min$ of $\phi(\xx)$, there are $d^2-d$ $\min$-operators, resulting in the dominated depth of the network of $\phi(\xx)$. 

The approximation error of $\Phi(\xx;\xth(t))$ is 
\[
\|\Phi(\xx;\theta(t))-f(\xx,t)\|_{L^\infty([0,1]^d)}\leq 3d\cdot\omega_{f(\xx,t)}(\frac{1}{K}).
\]
The construction of $\Phi$ is visualized in Figure \ref{fig:NN6} and it is clear that only the parameters in the last linear combination of $\Phi$ depend on $f(\xx,t)$.

To prove Theorem \ref{thm:ap0} in this paper, we will arrange the $\sigma_1$-$\tn{NN}$s of $\{\phi(K\xx-\xk)\}_{\xk\in\{0,\dots,K\}^d}$ into $a$ rows and $b$ columns with $ab\geq M=(K+1)^d$ as in Figure \ref{fig:NN7}. The sum in \eqref{eqn:ap0sum} is carried out via partial linear combination indicated by black arrows in Figure \ref{fig:NN7}.

Originally, the width of each red block in Figure \ref{fig:NN7} is $12+2d$ and the depth is $d^2-d+1$. Since there is an identity map of $\xx\in\R^d$ in the blue blocks, the width of red blocks can be reduced to $12$. Hence, given $N$ and $L$ as the total width and depth of $\Phi$, the constraint $ab\geq M=(K+1)^d$ becomes
\[
\left\lfloor \frac{N-2-2d}{12} \right\rfloor  \left\lfloor \frac{L}{d^2-d+1} \right\rfloor \geq (K+1)^d.
\]
Hence, the largest possible $K$ is
\[
K=\left( \left\lfloor \frac{N-2-2d}{12} \right\rfloor  \left\lfloor \frac{L}{d^2-d+1} \right\rfloor  \right)^{1/d}-1,
\]
leading to the final error estimation
\begin{eqnarray*}
\|\Phi(\xx;\theta(t))-f(\xx,t)\|_{L^p([0,1]^d)}&\leq & \|\Phi(\xx;\theta(t))-f(\xx,t)\|_{L^\infty([0,1]^d)}\\
&\leq & 3d\cdot\omega_{f(\xx,t)}\left(\frac{1}{\left( \left\lfloor \frac{N-2-2d}{12} \right\rfloor  \left\lfloor \frac{L}{d^2-d+1} \right\rfloor  \right)^{1/d}-1}\right)
\end{eqnarray*}
for $p\in[1,\infty)$. Note that $a\geq 1$ and $b\geq 1$. Hence, we require that $N\geq 2d+14$ and $L\geq d^2-d+1$.

Finally, most parameters in $\Phi(\xx;\xth(t))$ are constants independent of $f(\xx,t)$. The parameters depending on $f(\xx,t)$ are the linear combination coefficients in the sum of \eqref{eqn:ap0sum}. Hence, since $f(\xx,t)$ is in $C^m(\Omega_t)$ for a fixed $\xx$, $\xth(t)$  is also in $C^m(\Omega_t)$.
\end{proof}

\begin{figure}[!ht]
		\centering
		\includegraphics[width=0.7\linewidth]{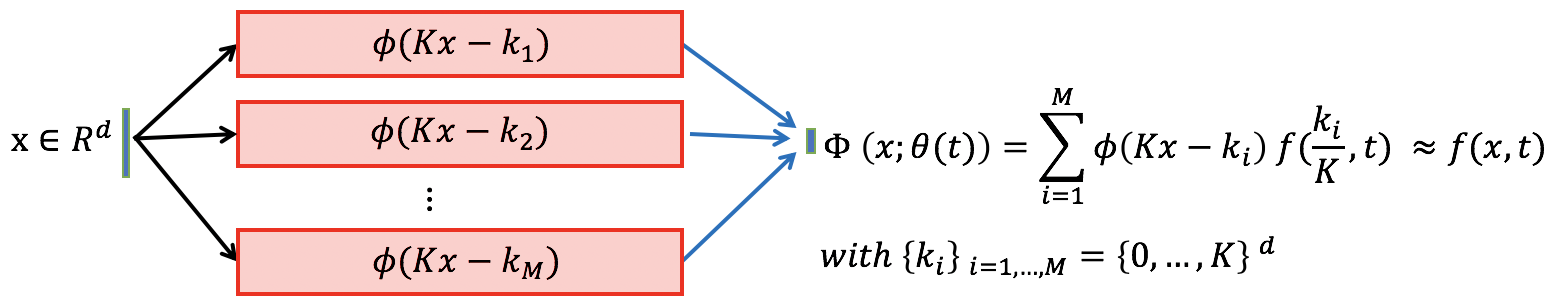}
		\caption{An illustration of the proof of Proposition $1$ in \cite{yarotsky18a}. Green vectors represent the input and output of the $\sigma_1$-$\tn{NN}$ $\Phi(\xx;\xth(t))$ carrying out $\sum_{\xk\in\{0,\dots,K\}^d} \phi(K\xx-\xk) f(\xk/K,t)\approx f(\xx,t)$. We order all vectors $\xk\in\{0,\dots,K\}^d$ as $\{\xk_1,\xk_2,\dots,\xk_M\}$ with $M=(K+1)^d$. $\Phi(\xx;\xth(t))$ consists of $M$ basic sub-$\sigma_1$-$\tn{NN}$s, each of which exactly represents $\phi(K\xx-\xk_i)$ for $i=1,\dots,M$. Blue arrows represent the computation related to $f(\xx,t)$: a linear combination of $\{\phi(K\xx-\xk_i)\}$ with coefficients as $f(\xk_i/K,t)$; other parts of the network $\Phi$ are independent of $f$. 
		 }
		\label{fig:NN6}
	\end{figure}

\begin{figure}[!ht]
		\centering
		\includegraphics[width=1\linewidth]{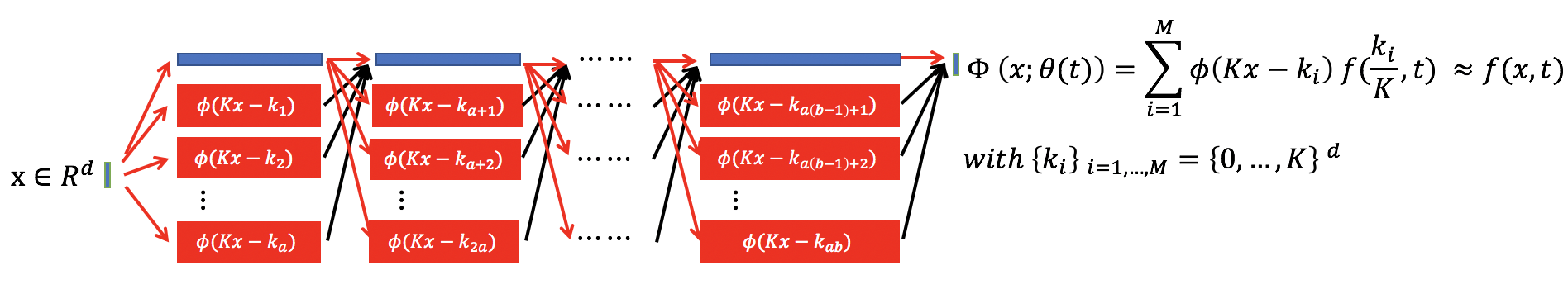}
		\caption{An illustration of the proof of Theorem \ref{thm:ap0}. Green vectors represent the input and output of the $\sigma_1$-$\tn{NN}$ $\Phi(\xx;\xth(t))$ carrying out $\sum_{\xk\in\{0,\dots,K\}^d} \phi(K\xx-\xk) f(\xk/K,t)\approx f(\xx,t)$. We order all vectors $\xk\in\{0,\dots,K\}^d$ as $\{\xk_1,\xk_2,\dots,\xk_M\}$ with $M=(K+1)^d\leq ab$. $\Phi(\xx;\xth(t))$ consists of $M$ basic sub-$\sigma_1$-$\tn{NN}$s, each of which exactly represents $\phi(K\xx-\xk_i)$ for $i=1,\dots,M$. These blocks are arranged into $a$ rows and $b$ columns. Blue blocks with width $2d+2$ consists of an identity map of $\xx\in\R^d$ and an identity map of $\R$ per layer. Red arrows pass $\xx$ to red sub-NNs in the next column or pass the partial sum  $\sum_{i=1}^{aj} \phi(K\xx-\xk_i) f(\xk_i/K)$  after the $j$-th column to the blue sub-NN in the next column. Black arrows represent the computation related to $f(\xx,t)$: a linear combination of $\{\phi(K\xx-\xk_i)\}$ with coefficients as $f(\xk_i/K,t)$; other parts of the network $\Phi$ are independent of $f$. 
		 }
		\label{fig:NN7}
	\end{figure}

\begin{figure}[!ht]
		\centering
		\includegraphics[width=1\linewidth]{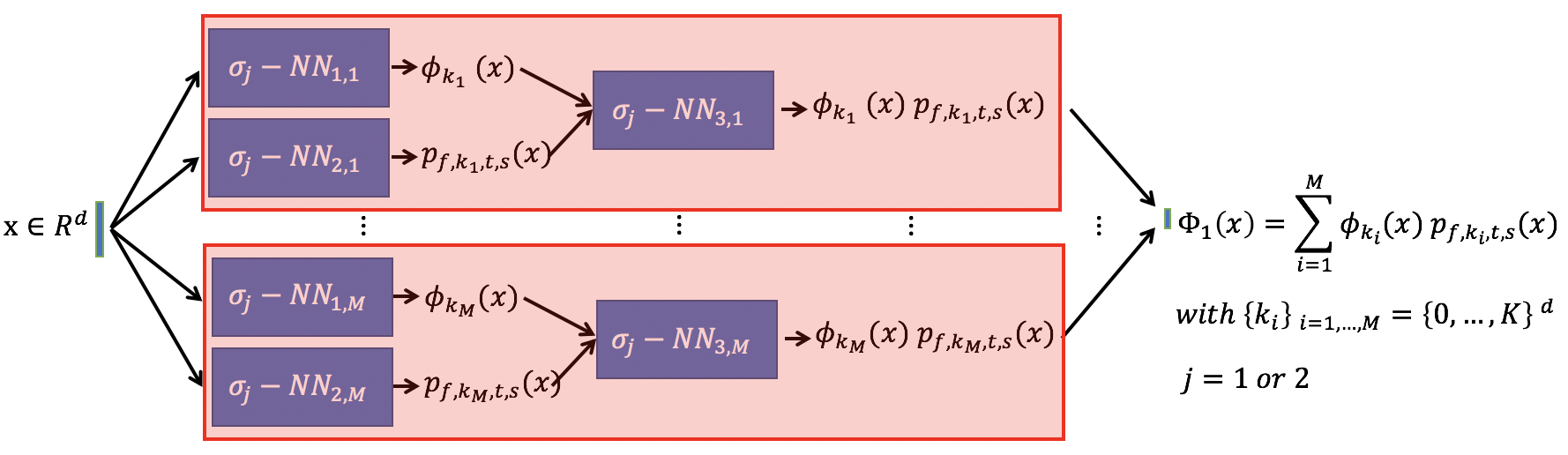}
		\caption{An illustration of 
  the proof of Part (i) in Theorem \ref{thm:ap2} when $j=2$. Green vectors represent the input and output of the $\sigma_j$-$\tn{NN}$ $\Phi_1(\xx)$ carrying out $\sum_{\xk\in\{0,\dots,K\}^d} \phi_\xk \pkt$. Black arrows represent the computational flow of either an identify map for imputing or outputting a variable, or a summation of several numbers.  We order all vectors $\xk\in\{0,\dots,K\}^d$ as $\{\xk_1,\xk_2,\dots,\xk_M\}$ with $M=(K+1)^d$. $\Phi_1(\xx)$ consists of $M$ basic sub-$\sigma_j$-$\tn{NN}$s denoted as $\sigma_j$-$\tn{NN}_{i}$ for $i=1,\dots,M$. Each $\sigma_j$-$\tn{NN}_{i}$ has three main components: $\sigma_j$-$\tn{NN}_{1,{i}}$, $\sigma_j$-$\tn{NN}_{2,{i}}$, and $\sigma_j$-$\tn{NN}_{3,{i}}$.  
		 }
		\label{fig:NN3}
	\end{figure}

	\begin{figure}[!ht]
		\centering
		\includegraphics[width=1\linewidth]{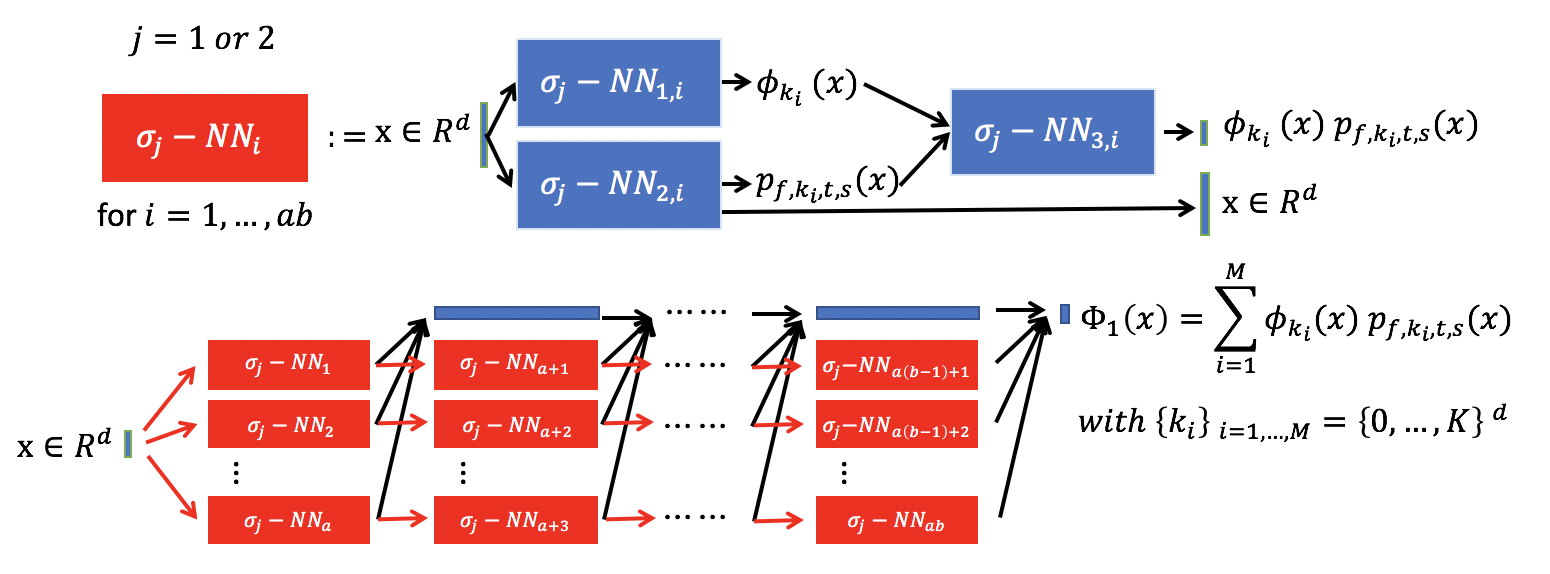}
		\caption{An illustration of the proof of Part (ii) in 
  Theorem \ref{thm:ap2} when $j=2$.  Green vectors represent the input and output of the $\sigma_j$-$\tn{NN}$ $\Phi_2(\xx)$ carrying out $\sum_{\xk\in\{0,\dots,K\}^d} \phi_\xk \pkt$. Black arrows represent the computational flow of either an identify map of $\R$ or a summation of several numbers. Red arrows represent the data flow of an identify map of $\xx$. The construction essentially arranges the basic sub-$\sigma_j$-$\tn{NN}$ blocks, each of which is denoted as $\sigma_j$-$\tn{NN}_i$ that represents $\phi_{\xk_i}p_{f,\xk_i,t,s}$ shown on top of this figure, into $a$ rows and $b$ columns such that $ab\geq M=(K+1)^d$. The narrow blue sub-$\sigma_j$-$\tn{NN}$s  in $\Phi_2$ calculate a partial sum and move it to the next column of sub-$\sigma_j$-$\tn{NN}$s.  }
		\label{fig:NN4}
	\end{figure}

Next, we present the approximation theory of $\sigma_2$-$\tn{NN}$s in the Sobolev space $\Wnp(\Omega)\times C^m(\Omega_t)$ as follows.

\begin{proof}[Proof of Theorem \ref{thm:ap2}]
In Lemma \ref{lem:atpa}, we have constructed local polynomials to approximate $f$ with an approximation error measured by the $\Wsp$-norm. We will apply Lemma \ref{lem:NNex2} to show that the sum of local polynomials can be represented by a $\sigma_2$-$\tn{NN}$ $\Phi_1$ or $\Phi_2$ satisfying the requirements in Part (i) and (ii) in this theorem.

Let $K$ be a sufficiently large integer to be determined later. For the given $d$, $K$, $n$, $p$, $s$, let $\Psi=\Psi(d,K)=\{\phi_{\xk}:\xk\in\{0,\dots,K\}^d\}$ be the partition of unity from Lemma \ref{lem:pu}. By Lemma \ref{lem:atpa}, for the given $f\in \Wnp((0,1)^d)\times C^m(\Omega_t)$, there exist polynomials $\pkt(\xx)=\sum_{|\xal|\leq n-s} c_{f,\xk,\xal,s}(t) \xx^\xal$ for $\xk\in\{0,\dots,K\}^d$ such that
\begin{equation}\label{eqn:req4}
\|f-\sum_{\xk\in\{0,\dots,K\}^d} \phi_\xk \pkt\|_{\WW^{s,p}((0,1)^d} \leq C_s  \left( \frac{1}{K} \right)^{n-s} \|f\|_{\Wnp((0,1)^d)}
\end{equation}
for some constant $C_s =C_s(n,d,p)$.

\textbf{Proof of Part (i).} If we choose $K=\left\lceil \left(\frac{C_s \|f\|_{W^{n,p}((0,1)^d)}}{\epsilon} \right)^{1/(n-s)}\right\rceil$, then by \eqref{eqn:req4}
\[
\|f-\sum_{\xk\in\{0,\dots,K\}^d} \phi_\xk \pkt\|_{\WW^{s,p}((0,1)^d} \leq \epsilon.
\] We will show that there is a $\sigma_2$-$\tn{NN}$ $\Phi_1(\xx;\xth_1(t))$ representing $\sum_{\xk\in\{0,\dots,K\}^d} \phi_\xk \pkt$ in the above equation satisfying the requirements of width, depth, and the number of parameters in Part (i). Then Part (i) is proved. 

We have visualized the construction of the $\sigma_2$-$\tn{NN}$ $\Phi_1$ in Figure \ref{fig:NN3}. We order all vectors $\xk\in\{0,\dots,K\}^d$ as $\{\xk_1,\xk_2,\dots,\xk_M\}$ with $M=(K+1)^d$. For each $\xk_i$, we construct a sub-$\sigma_2$-$\tn{NN}$ $\sigma_2$-$\tn{NN}_{i}$ consisting of three $\sigma_2$-$\tn{NN}$s: $\sigma_2$-$\tn{NN}_{1,{i}}$ representing the partition of unity function $\phi_{\xk_i}$; $\sigma_2$-$\tn{NN}_{2,{i}}$ carrying out $p_{f,\xk_i,t,s}$; and $\sigma_2$-$\tn{NN}_{3,{i}}$ for a multiplication function. Then the output of this sub-$\sigma_2$-$\tn{NN}$ is $\phi_{\xk_i}p_{f,\xk_i,t,s}$. Finally, one more layer of $\tn{NN}$ sums up $\phi_{\xk_i}p_{f,\xk_i,t,s}$ to get $\Phi_1=\sum_{i=1}^M \phi_{\xk_i} p_{f,\xk_i,t,s}$.

For each $i$, the existence of $\sigma_2$-$\tn{NN}_{1,{i}}$ is given by Lemma \ref{lem:pu} and $\sigma_2$-$\tn{NN}_{1,{i}}$ has a width at most $\max\{4,2d\}$ and a depth at most $1+\lceil \log_2 d \rceil$. For each $i$, the construction of $\sigma_2$-$\tn{NN}_{2,{i}}$ is given by Lemma \ref{lem:NNex2} (vi) and  $\sigma_2$-$\tn{NN}_{2,{i}}$ has a width at most $4Na+2d+2$ and a depth at most $L$ for any $(a,b,N,L)\in \N^4$ such that $ab\geq 2d^{n-s}$ and $(L-2b-b\log_2 N)N\geq b(n-s)$, since there are at most $2d^{n-s}$ terms in the sum and the maximum degrees of all these terms is $n-s$. By choosing $a=1$, $b=2d^{n-s}$, $N=1$, and $L=(4+2(n-s))d^{n-s}$, $\sigma_2$-$\tn{NN}_{2,{i}}$ has a width $2d+6$ and a depth $(4+2(n-s))d^{n-s}$. Note that $\sigma_2$-$\tn{NN}_{3,{i}}$ for all $i$ can be constructed by Lemma \ref{lem:NNex2} (iv) using width $4$ and depth $1$. Hence, the total width and depth of $\Phi_1$ is at most $(K+1)^d(2d+6)$ and $1+(4+2(n-s))d^{n-s}$, respectively. Note that $K=\left\lceil \left(\frac{C_s \|f\|_{W^{n,p}((0,1)^d)}}{\epsilon} \right)^{1/(n-s)}\right\rceil$. Hence, the width is at most $(2d+6)\left( \left( \frac{C_s \|f\|_{\Wnp((0,1)^d)}}{\epsilon} \right)^{1/(n-s)}+2\right)^d\leq (2d+6)\left(  \frac{C_s \|f\|_{\Wnp((0,1)^d)}}{\epsilon} +2^{n-s} \right)^{d/(n-s)}$.

Actually, each $\sigma_2$-$\tn{NN}_{2,{i}}$ has a sub-$\sigma_2$-$\tn{NN}$ of width $2d$ copying the input $\xx$ almost to its end (see the blue blocks in Figure \ref{fig:NN1} that illustrates the basic building block of $\sigma_2$-$\tn{NN}_{2,{i}}$) and a sub-$\sigma_2$-$\tn{NN}$ of width $2$ for summing up real numbers (see the blue blocks in Figure \ref{fig:NN2}). Hence, it is redundant for all $\sigma_2$-$\tn{NN}_{2,{i}}$'s to have their own sub-$\sigma_2$-$\tn{NN}$'s. If they share the same sub-$\sigma_2$-$\tn{NN}$, then the total width of $\Phi_1$ can be reduced to $2d+2+4\left(  \frac{C_s \|f\|_{\Wnp((0,1)^d)}}{\epsilon} +2^{n-s} \right)^{d/(n-s)}$.

\textbf{Proof of Part (ii).} In part $(ii)$, given the budget of the total width $ {N}$ and the total depth $ {L}$, we will identify how large $K$ can be so as to construct $\Phi_2$ within the budget. Suppose $K$ has been determined, then the construction of $\Phi_2$ is as follows.

Suppose $a$ and $b$ are two positive integers such that 
\begin{equation}\label{eqn:ab}
ab\geq M=(K+1)^d
\end{equation}
 and their values are to be determined later. We construct $\Phi_2(\xx;\xth_2(t))$ in Part (ii) using $a$ rows and $b$ columns of sub-$\sigma_2$-$\tn{NN}$s denoted as $\{\sigma_2$-$\tn{NN}_{i}\}_{i=1}^{ab}$ listed in red in Figure \ref{fig:NN4}. $\sigma_2$-$\tn{NN}_{i}$ is constructed in almost the same form as $\sigma_2$-$\tn{NN}_{i}$ in Part (i) shown in red in Figure \ref{fig:NN3}; the only difference is that $\sigma_2$-$\tn{NN}_{i}$ in $\Phi_2$ carries over its input $\xx$ as a part of its output. The capacity of passing $\xx$ to its output is due to the deign of its sub-$\sigma_2$-$\tn{NN}$ $\sigma_2$-$\tn{NN}_{i,2}$, which contains an identify map of $\R^d$ in each layer as shown in Figure \ref{fig:NN1}. Hence, $\sigma_2$-$\tn{NN}_{i}$ in $\Phi_2$ as shown in Figure \ref{fig:NN4} is able to compute $\phi_{\xk_i}p_{f,\xk_i,t,s}$ with an input $\xx$ from the previous column; the results of $\sigma_2$-$\tn{NN}_{i}$ in the same column are added up to obtain an accumulated partial sum, which is carried over to the next column of $\Phi_2$ via an identify map in blue in Figure \ref{fig:NN4}. Finally, after $b$ columns of partial sums, we obtain $\Phi_1=\sum_{i=1}^M \phi_{\xk_i} p_{f,\xk_i,t,s}$.

The size of $\sigma_2$-$\tn{NN}_{i}$ in $\Phi_2$ is the same as that of $\sigma_2$-$\tn{NN}_{i}$ in $\Phi_1$, i.e., width at most $2d+6$ and depth at most $(4+2(n-s))d^{n-s}$. Since we have arranged $a$ rows and $b$ columns of $\sigma_2$-$\tn{NN}_{i}$'s, the total width of $\Phi_2$, which is $ {N}$, should satisfy 
\begin{equation}\label{eqn:req1}
 {N}\geq 2+a(2d+6)
\end{equation}
 and the total depth, which is $ {L}$, should satisfy 
 \begin{equation}\label{eqn:req2}
  {L}\geq b(4+2(n-s))d^{n-s}. 
 \end{equation}

Actually, each $\sigma_2$-$\tn{NN}_{2,{i}}$ in $\sigma_2$-$\tn{NN}_{{i}}$ has a sub-$\sigma_2$-$\tn{NN}$ of width $2d$ copying the input $\xx$ almost to its end (see the blue blocks in Figure \ref{fig:NN1} that illustrates the basic building block of $\sigma_2$-$\tn{NN}_{2,{i}}$) and a sub-$\sigma_2$-$\tn{NN}$ of width $2$ for summing up real numbers (see the blue blocks in Figure \ref{fig:NN2}). Hence, it is redundant for all $\sigma_2$-$\tn{NN}_{2,{i}}$'s to have their own sub-$\sigma_2$-$\tn{NN}$'s. If they share the same sub-$\sigma_2$-$\tn{NN}$, we can sharpen \eqref{eqn:req1} to 
\begin{equation}\label{eqn:req3}
 {N}\geq 2+4a +2d,
\end{equation}
 where $2$ is for the shared partial sum sub-NN and $2d$ is for the shared identify map sub-NN. In sum, by \eqref{eqn:req2} and \eqref{eqn:req3}, we know the largest possible $a$ is $\lfloor \frac{ {N}-2-2d}{4}\rfloor$, and the largest possible $b$ is $\lfloor \frac{ {L}}{(4+2(n-s))d^{n-s}} \rfloor$. By the requirement in \eqref{eqn:ab}, the largest possible $K$ should satisfy
 \[
 \lfloor \frac{ {L}}{(4+2(n-s))d^{n-s}} \rfloor\lfloor \frac{ {N}-2-2d}{4}\rfloor\geq M=(K+1)^d.
 \]
Hence, the largest possible $K$ is
\begin{equation}\label{eqn:req5}
K= \left(\lfloor \frac{ {L}}{(4+2(n-s))d^{n-s}} \rfloor\lfloor \frac{ {N}-2-2d}{4}\rfloor\right)^{1/d}-1.
\end{equation}
By \eqref{eqn:req4} and \eqref{eqn:req5}, we have
\begin{eqnarray*}
&&\|\Phi_2(\xx;\xth_2(t))-f(\xx,t)\|_{\Wsp((0,1)^d)}\\
&=&
\|f-\sum_{\xk\in\{0,\dots,K\}^d} \phi_\xk \pkt\|_{\WW^{s,p}((0,1)^d}\\
&\leq & C_s  \left( \frac{1}{K} \right)^{n-s} \|f\|_{\Wnp((0,1)^d)}\\
&\leq & C_s  \left( \frac{1}{\left(\lfloor \frac{ {L}}{(4+2(n-s))d^{n-s}} \rfloor\lfloor \frac{ {N}-2-2d}{4}\rfloor\right)^{1/d}-1} \right)^{n-s} \|f\|_{\Wnp((0,1)^d)}\\
&\leq & \frac{\bar{C}_s  \|f\|_{\Wnp((0,1)^d)} }{\left( {N} {L}  \right)^{(n-s)/d}},
\end{eqnarray*}
where $\bar{C}_s =\bar{C}_s(n,d,p)$. So, we have proved Part (ii) in this Theorem.

\textbf{Proof of the smoothness of $\xth_1(t)$ and $\xth_2(t)$.} The proof of the existence of $\Phi_1(\xx;\xth_1(t))$ and $\Phi_2(\xx;\xth_2(t))$ above is constructive and hence we know their parameter sets $\xth_1(t)$ and $\xth_2(t)$ explicitly. Most of these parameters are constants independent of $f(\xx,t)$ and hence they are in $C^m(\Omega_t)$ as a constant function in $t$. The sub-NNs depending on $f(\xx,t)$ in $\Phi_1(\xx;\xth_1(t))$ and $\Phi_2(\xx;\xth_2(t))$ are the sub-NNs denoted as  $\sigma_2$-$\tn{NN}_{2,i}$ carrying out $p_{f,\xk_i,t,s}$ for $i=1,\dots,M$. Recall that the construction of each $\sigma_2$-$\tn{NN}_{2,i}$ is illustrated in Figure \ref{fig:NN2}, where the parameters depending on $f(\xx,t)$ comes from the summation represented by black arrows in Figure \ref{fig:NN2}. These parameters depending on $f$ are given by the coefficients of the polynomial of $\xx$ in \eqref{eqn:para}. By \eqref{eqn:atp1} and \eqref{eqn:atp2}, these coefficients are equal to a finite linear combination of terms of the form
\begin{equation*}
\int_B \frac{1}{\xal !} D^{\xal} \tilde{f}(\yy,t) \yy^{\bar{\xal}}  \phi_{\xk_i} (\yy) d\yy,
\end{equation*}
where $|\bar{\xal}|\leq |\xal|\leq n-s$, $\tilde{f}$ is the extension of $f$, $\phi_{\xk_i}$ is a cut-off function with $i$ corresponding to $\sigma_2$-$\tn{NN}_{2,i}$, $B$ is a bounded compact set belong to the support of $\phi_{\xk_i}$. Since $|\phi_{\xk_i} (\yy)|\leq 1$, $| \yy^{\bar{\xal}}|$ is bounded on $B$, and $f(\yy,t)$ is in $\Wnp((0,1)^d)\times C^m(\Omega_t)$ on $B$, by the Lebesgue's dominated convergence theorem, we know that the non-constant parameters in $\xth_1(t)$ and $\xth_2(t)$ are functions in $C^m(\Omega_t)$. Hence, we have completed the proof of Theorem \ref{thm:ap2}.
\end{proof}

\end{document}